\documentclass[10pt, letterpaper, twoside]{article}

\usepackage{amsmath, amsthm,amssymb,bbold,mathrsfs} 
\usepackage{amsfonts}
\usepackage{graphicx}
\usepackage[titletoc,title]{appendix}
\usepackage{enumitem}
\setlist[itemize]{itemsep=0pt,parsep=2pt,topsep=2pt}
\setlist[enumerate]{itemsep=0pt,parsep=2pt,topsep=2pt}
\usepackage[numbers]{natbib}
\usepackage[hidelinks]{hyperref}

\setlength{\textwidth}{6in}
\setlength{\textheight}{8.5in}
\setlength{\topmargin}{-0.3in}
\setlength{\oddsidemargin}{0.2in}
\setlength{\evensidemargin}{0.2in}

\newtheorem{theorem}{Theorem}[section]
\newtheorem{cor}[theorem]{Corollary}
\newtheorem{prop}[theorem]{Proposition}
\newtheorem{lemma}[theorem]{Lemma}
\newtheorem{assumption}[theorem]{Assumption}
\newtheorem{definition}[theorem]{Definition}
\theoremstyle{remark}
\newtheorem{rem}[theorem]{Remark}
\theoremstyle{definition}
\newtheorem{example}{Example}[section]
\theoremstyle{plain}

\numberwithin{equation}{section}

 
 \def\A{\mathbb{A}}
 \def\X{\mathbb{X}}
 \def\E{\mathbb{E}}
 
 \def\R{\mathbb{R}}
 \def\An{\mathcal{A}}
 
 \def\F{\mathcal{F}}
 \def\P{\mathcal{P}}
 \def\B{\mathcal{B}}
 \def\M{\mathcal{M}}
 \def\U{\mathcal{U}}
 \def\Q{\mathcal{Q}}

 \def\uh{\underline{h}}
 \def\bh{\bar{h}}

 \def\epi{\mathop{\text{epi}}}

\def\cl{\mathop{\text{cl}}}

\def\0{\mathbf{0}}
\def\1{\mathbf{1}}
\def\ind{\mathbb{1}}
 
\def\uv{\underline{v}}
\def\um{\underline{m}}
\def\uc{\underline{c}}
\def\elims{\underline{\mathop{\text{\rm e-lim}}}}
\def\supp{\mathop{\text{\rm supp}}}

\def\myqed{\hfill \qed}

\pagestyle{myheadings}

\begin{document} 
\markboth{ACOI for Borel-Space MDPs}{}

\title{Average Cost Optimality Inequality for\\ Markov Decision Processes with Borel Spaces\\ and Universally Measurable Policies\thanks{This research was funded by DeepMind, AMII and Alberta Innovates---Technology Futures (AITF).}
\thanks{This paper consists of the main results originally given in the author's arXiv eprint \cite[Section 3]{Yu19} and newly added sections for an extended discussion and illustrative examples.}}

\author{Huizhen Yu\thanks{RLAI Lab, Department of Computing Science, University of Alberta, Canada (\texttt{janey.hzyu@gmail.com})}}
\date{}

\maketitle

\begin{abstract}
We consider average-cost Markov decision processes (MDPs) with Borel state and action spaces and universally measurable policies. For the nonnegative cost model and an unbounded cost model with a Lyapunov-type stability character, we introduce a set of new conditions under which we prove the average cost optimality inequality (ACOI) via the vanishing discount factor approach. Unlike most existing results on the ACOI, our result does not require any compactness and continuity conditions on the MDPs. Instead, the main idea is to use the almost-uniform-convergence property of a pointwise convergent sequence of measurable functions as asserted in Egoroff's theorem. Our conditions are formulated in order to exploit this property. Among others, we require that for each state, on selected subsets of actions at that state, the state transition stochastic kernel is majorized by finite measures. We combine this majorization property of the transition kernel with Egoroff's theorem to prove the ACOI.
\end{abstract}

\bigskip
\bigskip
\bigskip
\noindent{\bf Keywords:}\\
Markov decision processes; Borel spaces; universally measurable policies;\\ average cost; optimality inequality; 
majorization conditions


\clearpage
\tableofcontents
\clearpage

\section{Introduction} \label{sec-1}

We consider discrete-time Markov decision processes (MDPs) with Borel state and action spaces, under the average cost criterion where the objective is to minimize the (limsup) expected long-run average cost per unit time.
Specifically, we consider the universal measurability framework, which involves lower semi-analytic (l.s.a.)~one-stage cost functions and universally measurable (u.m.)~policies. It is a mathematical formulation of MDPs developed to resolve measurability difficulties in dynamic programming on Borel spaces \cite{Blk-borel,BFO74,Shr79,ShrB78,ShrB79,Str-negative}.
An in-depth study of this theoretical framework is given in the monograph~\cite[Part~II]{bs}, and optimality properties of finite- and infinite-horizon problems with discounted and undiscounted total cost criteria have been analyzed (see e.g., \cite[Part II]{bs} and \cite{MS92,Yu-tc15,YuB-mvipi}). The average cost problem has not been thoroughly studied in this framework, however; there are only a few prior results (which we will discuss below). The primary purpose of this paper is to investigate the subject further, and our central focus will be on the average cost optimality inequality (ACOI). 

The study of ACOI was initiated by Sennott \cite{Sen89}, who proved it for countable-space MDPs; prior to \cite{Sen89}, the ACOE (average cost optimality equation) was the research focus. Cavazos-Cadena's counterexample \cite{CCa91} showed that the ACOI is more general: in his example, the ACOI has a solution and yet the ACOE does not. For Borel-space MDPs, the ACOI was first established by Sch{\"a}l~\cite{Sch93} for one-stage costs that are bounded below and under, alternatively, two types of compactness and continuity conditions. Specifically, the first (resp.~second) case requires that 
the state transition stochastic kernel is continuous with respect to (w.r.t.)~setwise convergence (resp.~weak convergence) and the one-stage cost function lower semicontinuous (l.s.c.) in the action variable (resp.~the state and action variables).
The compactness conditions on the action sets in \cite{Sch93} were weakened by Hern\'{a}ndez-Lerma~\cite{HLe91} (to inf-compactness for the first case) and by Feinberg, Kasyanov, and Zadoianchuk~\cite{FKZ12} (to $\mathbb{K}$-inf-compactness for the second case; a similar, slightly stronger condition was proposed by Costa and Dufour~\cite{CoD12} independently). Extensions of these results to unbounded one-stage costs and to ACOEs have also been studied; see the textbook accounts given in Hern\'{a}ndez-Lerma and Lasserre~\cite[Chap.~10]{HL99} and for more recent advances, see Vega-Amaya~\cite{VAm03,VAm15,VAm18}, Ja\'{s}kiewicz and Nowak \cite{JaN06}, Feinberg et al.~\cite{FKL19,FeL17} and the references therein. (For related earlier researches, see also the survey by Arapostathis et al.~\cite{ArB93}.)

In this paper, we prove the ACOI for two types of MDPs in the universal measurability framework: the nonnegative cost model and an unbounded cost model with a Lyapunov-type stability property. These models have been studied in the references just mentioned. 
As in some of those studies, to prove the ACOI, we use the vanishing discount factor approach, which treats the average cost problem as the limiting case of the discounted problems, and we adopt some boundedness conditions formulated in \cite{FKZ12,HL99,Sch93} regarding the optimal value functions of the discounted problems. 

Different from those studies, however, our results require \emph{no compactness and continuity conditions} on the one-stage cost function and state transition stochastic kernel.
Instead, we introduce a set of new conditions of, what we call, the majorization type: among others, we require that for each state, on selected subsets of actions at that state, the state transition stochastic kernel is majorized by finite measures (see Assumptions~\ref{cond-uc-2},~\ref{cond-pc-2}). Our main idea is to combine these majorization properties with Egoroff's theorem, which asserts that pointwise convergence of functions is ``almost'' uniform convergence as measured by a given finite measure (cf.\ footnote~\ref{footnote-Egroff}), and which allows us to extract arbitrarily large sets (large as measured by the majorizing finite measures) on which certain functions involved in our analyses have desired uniform convergence properties. With this technique, we obtain the ACOI for the two MDP models mentioned above (see Theorems~\ref{thm-uc-acoi},~\ref{thm-pc-acoi}). These results can be applied to a class of MDPs with discontinuous dynamics and one-stage costs.

For comparison, let us discuss several prior researches relevant to average-cost MDPs with u.m.\ policies. 
Dynkin and Yushkevich \cite[Chap.~7.9]{DyY79} and Piunovskiy~\cite{Piu89} studied the characteristic properties of canonical systems---a general form of the ACOE together with stationary policies that solve or almost solve the ACOE. (\cite[Chap.~7.9]{DyY79} considers an abstract model with desired measurable structures; \cite{Piu89} considers the universal measurability framework.)

Gubenko and Shtatland used a contraction-based fixed point approach to prove the ACOE under, alternatively, a minorization condition and a majorization on the state transition stochastic kernel~\cite[Thms.~2, 2$'$]{GuS75} (measurability issues are assumed away in these theorems). Their majorization condition not only differs in essential ways from ours but is also too stringent to be practical (see Remark~\ref{rem-GuS-maj-cond} for details). But their approach with the minorizaton condition is a fruitful one and has close connections with ergodic theory for Markov chains; in particular, their minorization condition implies that the MDP is uniformly geometrically ergodic (cf.\ Section~\ref{sec-bd-uc}). Their contraction argument was extended by Kurano~\cite{Kur86} to a multistep-contraction argument for average-cost MDPs with u.m.\ policies. The main results of \cite{GuS75,Kur86} on the ACOE concern the case of bounded one-stage costs. 

For unbounded one-stage costs, a contraction-based fixed point approach to proving the ACOE was proposed more recently. It was originally formulated by Vega-Amaya~\cite{VAm03}, building on the result of Hern\'{a}ndez-Lerma and Lasserre on positive Harris recurrent Markov chains, as well as on the prior research of Gordienko and Hern\'{a}ndez-Lerma~\cite{GHL95}, which relates a class of MDPs with unbounded one-stage costs to $w$-geometrically ergodic Markov chains. 
In \cite{VAm03} (also the recent work \cite{VAm18}), this fixed point approach is applied to MDPs with continuity/compactness properties. The approach was taken by Ja\'{s}kiewicz~\cite{Jas09} to establish the ACOE for semi-Markovian decision processes (SMDPs) in the universal measurability framework. The conditions of \cite{Jas09,VAm03} generalize the minorization condition of \cite{GuS75}, and the ACOE results of \cite{Jas09,VAm03} are applicable to a class of MDPs that are uniformly $w$-geometrically ergodic w.r.t.~certain weight functions $w$ on the state spaces (cf.\ the discussion in Section~\ref{sec-acoe-uc}).

Meyn~\cite{Mey97} studied the convergence of policy iteration for average-cost MDPs. He obtained the ACOE and the existence of a stationary average-cost optimal policy, under a set of conditions on the Markov chains induced by stationary policies that could be generated by the policy iteration algorithm, together with a continuity condition on the differential cost functions of those policies and the one-stage cost function. A large part of his analysis is based on general state space Markov chain theory and requires no compactness/continuity conditions. However, it does not directly apply to Borel-space MDPs in the universal measurability framework due to known measurability issues in policy iteration. It is still an open problem to extend the method of~\cite{Mey97} to average-cost MDPs with u.m.\ policies.

We remark that these prior studies differ significantly from our work in both the approaches taken and the results obtained. When compactness/continuity conditions are absent, a major tool used in many of those studies is ergodic theory for Markov chains, whereas, by exploiting Egoroff's theorem, our analyses and results can be applied to non-ergodic MDPs (cf.\ the examples in Section~\ref{sec-5} and Appendix~\ref{appsec-ex-uc}).

Let us also mention two related results from our separate recent work. 
In~\cite{Yu-minpair19}, we formulated another condition of the majorization type to make use of Lusin's theorem when applying a direct method, the minimum pair approach, to study the average cost problems; this result is for countable discrete action spaces and strictly unbounded one-stage costs. 
In \cite[sect.~2.2]{Yu19} (see also Theorem~\ref{thm-ac-basic}), for the two MDP models considered in this paper, we gave a characterization of the structures of the optimal cost functions and optimal/$\epsilon$-optimal policies in the average cost problems, without any extra conditions.

The rest of this paper is organized as follows. 
In Section~\ref{sec-2}, we give background materials about Borel-space MDPs.
In Section~\ref{sec-3}, we propose new majorization type conditions and prove the ACOI for two MDP models. 
Further discussion and illustrative examples are given in Section~\ref{sec-5} and Appendix~\ref{appsec-1}.

\section{Preliminaries} \label{sec-2}

In this section, we first introduce Borel-space MDPs in the universal measurability framework. To prepare the stage for subsequent analyses, we then review several basic optimality properties for the nonnegative cost and unbounded cost models we consider, under the average and discounted cost criteria. We start with certain sets and functions that lie at the foundation of Borel-space MDPs. 

\subsection{Definitions of some Sets and Functions} \label{sec-2.1}
We consider separable metrizable spaces. A \emph{Borel space} (a.k.a.\ standard Borel space) is a separable metrizable space that is homeomorphic to a Borel subset of some Polish space \cite[Def.\ 7.7]{bs}. 
For a Borel space $X$, let $\B(X)$ denote the Borel $\sigma$-algebra and $\P(X)$ the set of probability measures on $\B(X)$ (we will call them Borel probability measures). With the topology of weak convergence, $\P(X)$ is also a Borel space~\cite[Chap.\ 7.4]{bs}. Each $p \in \P(X)$ has a unique extension on a larger $\sigma$-algebra $\B_p(X)$ generated by $\B(X)$ and all the subsets of $X$ with $p$-outer measure $0$, and this extension is called the \emph{completion of $p$} (cf.\ \cite[Chap.\ 3.3]{Dud02}). The \emph{universal $\sigma$-algebra} on $X$ is defined as $\U(X) : = \cap_{p \in \P(X)} \B_p(X)$. $\U(X)$-measurable functions on $X$ are thus measurable w.r.t.\ the completion of any Borel probability measure; these functions are called \emph{u.m.~(universally measurable)}. 

If $X$ and $Y$ are Borel spaces, a \emph{Borel or u.m.\ stochastic kernel} on $Y$ given $X$ is a function $q: X \to \P(Y)$ that is measurable from the space $\big(X, \B(X)\big)$ or $\big(X, \U(X)\big)$, respectively, to the space $\big(\P(Y), \B(\P(Y))\big)$; see \cite[Def.~7.12, Prop.~7.26, Lem.~7.28]{bs}. We use the notation $q(dy \,|\, x)$ for the stochastic kernel. When $q$ is a continuous function, we say the stochastic kernel is \emph{continuous} (a.k.a.\ \emph{weakly continuous} or \emph{weak Feller} in the literature).

An \emph{analytic set} in a Polish space is the image of a Borel subset of some Polish space under a Borel measurable function (cf.\ \cite[Prop.\ 7.41]{bs}, \cite[sect.\ 13.2]{Dud02}). A function $f: D \to [-\infty, \infty]$ is called \emph{l.s.a.~(lower semi-analytic)}, %
if $D$ is an analytic set and 
for every $a \in \R$, the level set $\{ x \in D \!\mid f(x) \leq a\}$ of $f$ is analytic \cite[Def.\ 7.21]{bs}. An equivalent definition is that the epigraph of $f$, $\{(x, a) \!\mid x \in D, f(x) \leq a, a \in \R\}$, is analytic (cf.\ \cite[p.~186]{bs}). 
For comparison, $f$ is \emph{l.s.c.~(lower semicontinuous)} if its epigraph is closed.
A Borel measurable extended-real-valued function on a Borel space is l.s.a.\ and an l.s.a.\ function is u.m., since in a Polish space every Borel set is analytic and every analytic set is u.m.\ (\cite[Cor.~7.42.1]{bs}, \cite[Thm.~13.2.6]{Dud02}).

The properties of analytic sets give rise to many properties of l.s.a.\ functions that are important for dynamic programming in Borel-space MDPs. The most critical is the Jankov-von Neumann measurable selection theorem~\cite[Prop.\ 7.49]{bs}, which asserts that for an analytic set $D$ in the product space $X \times Y$ of two Borel spaces, with $\text{proj}_X(D)$ being the projection of $D$ on $X$,
there exists an analytically measurable
\footnote{I.e., measurable w.r.t.\ the $\sigma$-algebra generated by the analytic sets.}
function $\phi: \text{proj}_X(D) \to Y$ such that the graph of $\phi$ lies in $D$.
This theorem gives rise to a measurable selection theorem for partial minimization of l.s.a.\ functions on product spaces~\cite[Prop.\ 7.50]{bs}.
For Borel-space MDPs, these properties of analytic sets and l.s.a.\ functions are closely related to the validity of value iteration, the structure of the optimal value functions, and the existence of optimal or nearly optimal policies and their structures, some of which we will discuss in Sections~\ref{sec-2.2}-\ref{sec-2.3}.
Due to space limit, however, we do not list these properties and will provide references where we use them in this paper.
\footnote{For l.s.a.\ functions, we refer the reader to the papers \cite{BFO74,MS92,ShrB78} and the monograph \cite[Chap.\ 7]{bs}; for general properties of analytic sets, see also the books \cite[Appendix~2]{DyY79} and \cite{Par67,Sriv-borel}.}

\subsection{Borel-space MDPs} \label{sec-2.2}

In the universal measurability framework, a Borel-space MDP has the following elements and model assumptions (cf.\ \cite[Chap.\ 8.1]{bs}):
\begin{itemize}[leftmargin=0.5cm,labelwidth=!]
\item The state space $\X$ and the action space $\A$ are \emph{Borel spaces}.
\item The control constraint is specified by a set-valued map $A: x \mapsto A(x)$, where for each state $x \in \X$, $A(x) \subset \A$ is a nonempty set of admissible actions at that state, and the graph of $A(\cdot)$,
$\Gamma = \{(x, a) \mid x \in \X, a \in A(x)\} \subset \X \times \A,$
is \emph{analytic}.
\item The one-stage cost function $c: \Gamma \to [-\infty, +\infty]$ is \emph{l.s.a.}
\item State transitions are governed by $q(dy \mid x, a)$, a \emph{Borel measurable} stochastic kernel on $\X$ given $\X \times \A$.
\end{itemize}

We consider infinite-horizon control problems. A policy consists of a sequence of stochastic kernels on $\A$ that specify for each stage, which admissible actions to apply, given the history up to that stage. 
In particular, a \emph{u.m.\ policy} is a sequence $\pi=(\mu_0, \mu_1, \ldots)$, where for each $k \geq 0$,
$\mu_k\big(da_k \!\mid x_0, a_0, \ldots, a_{k-1}, x_k \big)$ is a u.m.\ stochastic kernel on $\A$ given $(\X \times \A)^{k} \times \X$ and obeys the control constraint of the MDP:
\begin{equation}  \label{eq-control-constraint}
   \mu_k\big(A(x_k) \!\mid x_0, a_0, \ldots, a_{k-1}, x_k \big) = 1 \quad \forall \, (x_0, a_0, \ldots, a_{k-1}, x_k) \in (\X \times \A)^k \times \X.
\end{equation}   
(As $\Gamma$ is analytic, the sets $A(x)$ are u.m.~\cite[Lem.~7.29]{bs}; the probability of $A(x_k)$ here is measured w.r.t.\ the completion of $\mu_k(da_k \mid x_0, a_0, \ldots, a_{k-1}, x_k )$.)
A policy $\pi$ is \emph{Borel measurable} 
if each component $\mu_k$ is a Borel measurable 
stochastic kernel; $\pi$ is then also u.m.\ by definition.
(A Borel measurable policy, however, may not exist \cite{Blk-borel}.)
We define the policy space $\Pi$ of the MDP to be the set of u.m.\ policies. We shall simply refer to these policies as policies, dropping the term ``u.m.,'' if there is no confusion or no need to emphasize their measurability.

We define several subclasses of policies in the standard way.
A policy $\pi$ is \emph{nonrandomized} if $\mu_k\big(d a_k \!\mid\! x_0, a_0, \ldots, a_{k-1}, x_k \big)$ is a Dirac measure that assigns probability one to a single action in $A(x_k)$, for every $(x_0, a_0, \ldots, a_{k-1}, x_k)$ and $k \geq 0$. 
A policy $\pi$ is \emph{semi-Markov} if for every $k \geq 0$, the function $(x_0, a_0, \ldots, a_{k-1}, x_k) \mapsto  \mu_k(d a_k \!\mid\! x_0,  a_0, \ldots, a_{k-1}, x_k)$ depends only on $(x_0, x_k)$;
\emph{Markov} if for every $k \geq 0$, that function depends only on $x_k$; \emph{stationary} if $\pi$ is Markov and $\mu_k = \mu$ for all $k \geq 0$. For the stationary case, we simply write $\mu$ for $\pi = (\mu, \mu, \ldots)$. 
A nonrandomized stationary policy $\mu$ can also be viewed as a function that maps each $x \in \X$ to an action in $A(x)$, so for such $\mu$, we will use both notations $\mu(x)$, $\mu(d a\,|\, x)$ in the paper. 

Because the graph $\Gamma$ of the control constraint $A(\cdot)$ is analytic, by the Jankov-von Neumann selection theorem \cite[Prop.~7.49]{bs}, there exists at least one u.m.~nonrandomized stationary policy. Thus \emph{the policy space $\Pi$ is non-empty}.

We consider the average cost criterion and the discounted cost criterion. 
By \cite[Prop.\ 7.45]{bs}, given a policy $\pi \in \Pi$ and an initial state distribution $p_0 \in \P(\X)$, the collection of stochastic kernels $\mu_0(d a_0 \mid x_0)$, $q(dx_1 \mid x_0, a_0)$, $\mu_1(da_1 \mid x_0, a_0, x_1)$, $q(dx_2 \mid x_1, a_1), \ldots$
determines uniquely a probability measure on the universal $\sigma$-algebra on $(\X \times \A)^\infty$. The \emph{$n$-stage value function} and the \emph{average cost function} of $\pi$ are defined, respectively, by
\footnote{In general, for a u.m.\ function $f: (\X \times \A)^\infty \to [-\infty, +\infty]$, define $\E f : = \E f^+ - \E f^-$ where $f^+ = \max \{ 0, f \}$ and $f^- = - \min \{ 0, f\}$; if $\E f^+ = \E f^- = +\infty$, we adopt the convention $\infty - \infty =  - \infty + \infty =  \infty$. In the MDPs of our interest, however, we will not encounter such summations.}
$$ J_n(\pi, x) : = \E^\pi_x \Big[ \, \textstyle{\sum_{k=0}^{n-1} c(x_k, a_k)} \, \Big], \quad J(\pi,x) : = \limsup_{n \to \infty} J_n(\pi, x) /n, \quad x \in \X,$$
where $\E^\pi_x$ denotes expectation w.r.t.\ the probability measure induced by $\pi$ and the initial state $x_0 = x$.
Define 
the \emph{optimal average cost function} by
$$ g^*(x) : = \inf_{\pi \in \Pi} J(\pi,x) = \inf_{\pi \in \Pi} \limsup_{n \to \infty} J_n(\pi, x)/n, \qquad x \in \X.$$
For $0 < \alpha < 1$, define the \emph{$\alpha$-discounted value function} of a policy $\pi$ by
$$ v^\pi_\alpha(x) : = \limsup_{n \to \infty}  \E^\pi_x \Big[ \, \textstyle{ \sum_{k=0}^{n-1} \alpha^k c(x_k, a_k) } \, \Big], \qquad x \in \X,$$
and the \emph{optimal $\alpha$-discounted value function} by
$$ v_\alpha(x) : = \inf_{\pi \in \Pi} v^\pi_\alpha(x) , \qquad x \in \X.$$
The functions $J_n(\pi, \cdot)$, $J(\pi, \cdot)$ and $v^\pi_\alpha(\cdot)$ are u.m.\ by \cite[Prop.\ 7.46, Lem.~7.30(2)]{bs}. 
As will be discussed in the next subsection, for the two MDP models we consider, the optimal cost functions $g^*$ and $v_\alpha$ are l.s.a.

Let $\M(\X)$ (resp.\ $\An(\X)$) denote the space of extended-real-valued u.m.\ (resp.\ l.s.a.)~functions on $\X$.
For $0 < \alpha \leq 1$, define \emph{dynamic programming operators} $T_\alpha$ that map $v \in \M(\X)$ to a function on $\X$ according to
\footnote{Here and throughout the paper, the integration of a u.m.\ function w.r.t.\ $p \in \P(\X)$ is defined w.r.t.\ the completion of $p$ (cf.\ Section~\ref{sec-2.1}).}
$$(T_\alpha v)(x) : = \inf_{a \in A(x)} \left\{ c(x, a) + \alpha \int_\X v(y) \, q(dy \mid x, a) \right\}, \qquad x \in \X.$$
For $\alpha = 1$, we simply write $T$ for $T_\alpha$. 
By the properties of analytic sets and l.s.a.\ functions (cf.~\cite[Chap.\ 7]{bs}), $T_\alpha$ and $T$ map $\An(\X)$ into $\An(\X)$.

The following subclasses of u.m.\ and l.s.a.\ functions will be needed shortly. For a u.m.~function $w : \X \to (0, +\infty)$, which we shall refer to as a \emph{weight function}, let 
$$\M_w(\X) : = \big\{ f \mid  \| f \|_w < \infty, f \in \M(\X) \big\}, \quad \text{where} \  \ \| f\|_w : = \sup_{x \in \X} \big| f(x) \big| /w(x).$$
Note that $\big(\M_w(\X), \|\cdot\|_w\big)$ is a Banach space, and $\An(\X) \cap \M_w(\X)$ is a closed subset of this space.
\footnote{This is because convergence in the $\|\cdot\|_w$ norm implies pointwise convergence, and pointwise limits of a sequence of l.s.a.\ functions are l.s.a.~\cite[Lem.~7.30(2)]{bs}.}

\subsection{Some Basic Optimality Properties of Two Models} \label{sec-2.3}

We consider two classes of MDPs. The first is the nonnegative cost model where the one-stage cost function $c \geq 0$. We shall refer to this model as (PC) in what follows. For the average cost or discounted problem, it is equivalent to the case where $c$ is bounded from below. 

The second model, designated as (UC), involves unbounded costs: the function $c$ can be unbounded from below or above, but it needs to satisfy a growth condition and moreover, there is a Lyapunov-type condition on the dynamics of the MDP. The precise definition is as follows.
\begin{definition}[the model (UC)] \label{def-uc}
There exist a u.m.\ weight function $w(\cdot) \geq 1$ and constants $b, \hat c \geq 0$ and $\lambda \in [0,1)$ such that for all $x \in \X$,
\begin{enumerate}[leftmargin=0.65cm,labelwidth=!]
\item[\rm (a)] $\sup_{a \in A(x)} |c(x, a)| \leq \hat c \, w(x)$;
\item[\rm (b)] $\sup_{a \in A(x)} \int_\X w(y) \, q(dy \mid x, a) \leq \lambda w(x) + b$.
\end{enumerate}
\end{definition}

For (UC), its definition ensures that the average cost function of any policy $\pi$ satisfies $\|J(\pi, \cdot) \|_w \leq \ell$ for the constant $\ell =  \hat c \,b /(1-\lambda)$. Hence the optimal average cost function also satisfies $\|g^*\|_w \leq \ell$ and in particular, $g^*$ is finite everywhere. For (PC), $g^* \geq 0$ and it is possible that at some state $x$, $g^*(x) = +\infty$ (this possibility will be eliminated in Section~\ref{sec-3} under further assumptions on the MDP model).

For both (PC) and (UC), the average-cost MDPs have the general optimality properties given in the next theorem, which is proved by the author \cite[sects.~2.2,~A.2]{Yu19}. In what follows, by an $\epsilon$-optimal or optimal policy (with no mention of an initial state), we mean a policy that is $\epsilon$-optimal or optimal \emph{for all initial states}. 

{\samepage
\begin{theorem}[average-cost optimality results; {\cite[Thm.~2.1]{Yu19}}] \label{thm-ac-basic}
{\rm (PC)(UC)}
\begin{enumerate}[leftmargin=0.7cm,labelwidth=!]
\item[\rm (i)] The optimal average cost function $g^*$ is l.s.a. 
\item[\rm (ii)] For each $\epsilon > 0$, there exists a (u.m.)~randomized semi-Markov $\epsilon$-optimal policy. If there exists an optimal policy for each state $x \in \X$, then there exists a (u.m.)~randomized semi-Markov optimal policy.
\end{enumerate}
\end{theorem}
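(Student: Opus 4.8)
The plan is to prove both parts simultaneously by reducing the infimum over all u.m.\ policies to an infimum over a structured, analytically parametrized subfamily, and then invoking the measurable selection machinery for l.s.a.\ functions. The starting observation is that $J_n(\pi,x)$ depends on $\pi$ only through the one-stage state-action marginals $P^\pi_x\big((x_k,a_k)\in\cdot\big)$ for $0\le k\le n-1$, since the cost is additive with a single l.s.a.\ one-stage term. Hence, for a fixed initial state $x$, a Derman--Strauch type marginal-matching argument produces a Markov policy $\pi^x$ whose stage-$k$ kernel is a regular conditional distribution of $a_k$ given $x_k$ under $P^\pi_x$; this $\pi^x$ reproduces every marginal, so $J_n(\pi^x,x)=J_n(\pi,x)$ for all $n$ and therefore $J(\pi^x,x)=J(\pi,x)$. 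Gluing these initial-state-indexed Markov policies through the coordinate $x_0$ yields a \emph{semi-Markov} policy, since its stage-$k$ control depends only on $(x_0,x_k)$. This is the reduction that forces the (near-)optimal structure to be semi-Markov rather than Markov: a single Markov policy cannot in general match the marginals from every initial state at once, whereas reading off $x_0$ removes that obstruction.

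For part (i), I would realize $g^*$ as a partial minimization of an l.s.a.\ function over an analytic parameter set. First I would check that the map sending a suitably parametrized semi-Markov policy together with an initial state to the finite-horizon value $J_n(\cdot,\cdot)$ is jointly l.s.a.: these values are built from $c$ and $q$ by integration, inheriting lower semi-analyticity much as $T$ maps $\An(\X)$ into $\An(\X)$. Since countable suprema and infima preserve l.s.a.\ (the relevant level sets are countable unions and intersections of analytic sets), the limsup operation $J=\limsup_n J_n/n$ keeps the cost map l.s.a.\ in the joint variable, by the same reasoning underlying \cite[Lem.~7.30]{bs}. Writing $g^*(x)=\inf J(\text{policy},x)$ over the analytic family, lower semi-analyticity of $g^*$ then follows from the partial-minimization theorem for l.s.a.\ functions \cite[Prop.~7.50]{bs}. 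The delicate point is to choose the parametrization so that the admissible semi-Markov policies form an analytic set and the cost map is jointly l.s.a.; this is exactly what the marginal-matching reduction buys, by replacing the unwieldy space of all u.m.\ policies with kernels indexed measurably through $(x_0,x_k)$.

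For part (ii), with $g^*$ now known to be l.s.a., I would form, for each $\epsilon>0$, the set of pairs $(x,\theta)$ where $\theta$ parametrizes a semi-Markov policy that is $\epsilon$-optimal from $x$, i.e.\ $J(\theta,x)\le g^*(x)+\epsilon$ in the (PC) case and the analogous $w$-weighted relation in (UC). This set is analytic because both $J(\theta,x)$ and $g^*(x)$ are l.s.a., and its projection onto $\X$ is all of $\X$ by the definition of the infimum. The Jankov--von Neumann selection theorem \cite[Prop.~7.49]{bs} then yields an analytically (hence universally) measurable selector $x\mapsto\theta(x)$, which assembles into a single randomized semi-Markov policy that is $\epsilon$-optimal for \emph{every} initial state. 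The optimality statement is obtained by the same selection applied to the exact-optimality set ($\epsilon=0$), whose projection is all of $\X$ precisely under the hypothesis that an optimal policy exists for each $x$.

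The main obstacle I anticipate is the joint measurability underlying both the marginal-matching reduction and the final selection: one must show that the regular conditional distributions defining $\pi^x$ can be chosen to depend on $(x_0,x_k)$ in a u.m.\ way, and that the cost map is l.s.a.\ jointly in the policy parameter and the state, \emph{without} any continuity or compactness on $c$ and $q$. Two further technical wrinkles are the non-operator nature of the average cost --- $g^*$ is not simply $\limsup_n (T^n\mathbf{0})/n$, so the $\limsup$ must be carried through the entire analysis rather than exchanged with the infimum --- and, in the (PC) model, the possibility that $g^*(x)=+\infty$, which forces the $\epsilon$-optimality condition to be phrased so that it remains meaningful at such states.
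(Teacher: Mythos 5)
The paper itself gives no proof of Theorem~\ref{thm-ac-basic}; it defers entirely to \cite[sects.~2.2, A.2]{Yu19}. Your outline matches the approach taken there: Derman--Strauch marginal matching to reduce to semi-Markov policies, lower semi-analyticity of $g^*$ via partial minimization over an analytic parameter set, and Jankov--von Neumann selection for the $\epsilon$-optimal (or optimal) semi-Markov policy. The one place where your argument is not yet self-supporting is the ``analytic parametrization'': you attribute it to the marginal-matching reduction, but matching marginals does not by itself produce an analytic parameter set, and the space of u.m.\ kernels indexed by $(x_0,x_k)$ is not a Borel space in any usable way. The device that actually makes the partial-minimization and selection steps legitimate is the classical result that the set of pairs $(x,P^\pi_x)$ of initial states and induced strategic measures is an analytic subset of $\X\times\P\big((\X\times\A)^\infty\big)$; one then shows $P\mapsto\limsup_n \tfrac1n\int\sum_{k<n}c\,dP$ is l.s.a.\ on that space (using \cite[Props.~7.47, 7.48, Lem.~7.30(2)]{bs}, with some care in the (UC) case where $c$ is unbounded below), performs the infimum and the selection there, and only afterwards converts the selected strategic measure back into a randomized semi-Markov policy by measurable disintegration of $P_x$ with respect to $(x_0,x_k)$ (\cite[Prop.~7.27]{bs}), which also supplies the joint universal measurability you flag as the main obstacle. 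With that lemma inserted, your proof goes through and coincides in substance with the cited one.
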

}

In Section~\ref{sec-3}, we will use the vanishing discount factor approach to prove the ACOI for (PC) and (UC) under additional conditions.
That analysis starts with the optimality equations for the $\alpha$-discounted cost criteria ($\alpha$-DCOEs) given below:

\begin{theorem}[the $\alpha$-DCOE] 
\label{thm-dcoe} {\rm (PC)(UC)} 
For $\alpha \in (0,1)$, the optimal value function $v_\alpha$ is l.s.a.\ and satisfies the $\alpha$-DCOE $v_\alpha = T_\alpha v_\alpha$, i.e.,
$$  v_\alpha (x) =  \inf_{a \in A(x)} \left\{ c(x, a) + \alpha \int_\X v_\alpha(y) \, q(dy \mid x, a) \right\}, \qquad x \in \X.$$
For (PC), $v_\alpha$ is the smallest nonnegative solution of the $\alpha$-DCOE in $\An(\X)$; for (UC), $v_\alpha$ is the unique solution of the $\alpha$-DCOE in the space $\An(\X) \cap \M_w(\X)$. Furthermore, in both cases, for each $\epsilon > 0$, there exists a nonrandomized stationary $\epsilon$-optimal policy. 
\end{theorem}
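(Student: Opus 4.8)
The plan is to treat the two models separately, since (PC) is a positive (monotone) dynamic programming problem while (UC) is a contraction problem, and then to produce the $\epsilon$-optimal stationary policy for both by a single measurable-selection argument. Throughout, the measurability bookkeeping --- that every function I construct stays l.s.a.\ --- is handled by the machinery of \cite[Chap.~7]{bs}: $T_\alpha$ maps $\An(\X)$ into itself (as noted in the excerpt), integration of an l.s.a.\ function against a Borel kernel is l.s.a.\ \cite[Prop.~7.48]{bs}, and increasing pointwise limits of l.s.a.\ functions are l.s.a.\ \cite[Lem.~7.30(2)]{bs}.

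For (PC), I would run value iteration from $0$: set $u_0 \equiv 0$ and $u_{n+1} = T_\alpha u_n$. Since $c \geq 0$ and $\alpha > 0$, the operator $T_\alpha$ is order preserving and $u_n \uparrow$; each $u_n \in \An(\X)$ is the optimal $n$-stage $\alpha$-discounted cost (finite-horizon DP; cf.\ \cite[Chap.~9]{bs}). Monotone convergence of the nonnegative partial sums identifies $v_\alpha = \sup_n u_n$, which is l.s.a.\ as an increasing pointwise limit. For the fixed-point equation, monotone convergence lets me pass $\int u_n(y)\,q(dy\mid x,a) \uparrow \int v_\alpha(y)\,q(dy\mid x,a)$ inside $T_\alpha$; monotonicity then gives $T_\alpha v_\alpha \ge \sup_n T_\alpha u_n = v_\alpha$, and the matching inequality $T_\alpha v_\alpha \le v_\alpha$ follows from the finite-horizon optimality of the $u_n$ (the positive-model identity $v_\alpha = T_\alpha v_\alpha$ of \cite[Chap.~9]{bs}). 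Minimality is then a one-line induction: if $v \in \An(\X)$, $v \ge 0$, solves $v = T_\alpha v$, then $v \ge 0 = u_0$ and $v = T_\alpha v \ge T_\alpha u_n = u_{n+1}$ by monotonicity, so $v \ge \sup_n u_n = v_\alpha$.

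For (UC), I would instead set up a weighted-sup-norm contraction. The obstacle here is the additive constant $b$ in the Lyapunov inequality of Definition~\ref{def-uc}(b): with the bare weight $w$ one only gets $\|T_\alpha f - T_\alpha g\|_w \le \alpha(\lambda + b)\|f-g\|_w$, which need not contract when $\alpha$ is close to $1$. I would fix this by passing to the equivalent weight $W = w + c_0$ for a large constant $c_0$; since $\int W(y)\,q(dy\mid x,a) \le \lambda w + b + c_0 \le \lambda' W$ with $\lambda' = (\lambda+b+c_0)/(1+c_0) < 1$, one obtains $\|T_\alpha f - T_\alpha g\|_W \le \alpha\lambda'\|f-g\|_W$, a genuine contraction, while $\|\cdot\|_W$ and $\|\cdot\|_w$ are equivalent so the complete space $\An(\X)\cap\M_w(\X)$ (closed by the footnote in Section~\ref{sec-2.2}) is unchanged. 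The growth bound in Definition~\ref{def-uc}(a) shows $T_\alpha$ preserves $\M_w(\X)$, so Banach's theorem yields a unique fixed point $\hat v \in \An(\X)\cap\M_w(\X)$. To identify $\hat v = v_\alpha$ I would use the uniform moment bound $\E^\pi_x[w(x_k)] \le w(x) + b/(1-\lambda)$ (from iterating Definition~\ref{def-uc}(b)), which makes every discounted cost absolutely convergent and forces the tail $\alpha^n\,\E^\pi_x[\hat v(x_n)] \to 0$: iterating $\hat v = T_\alpha \hat v \le c(x,a)+\alpha\int \hat v(y)\,q(dy\mid x,a)$ along any $\pi$ gives $\hat v \le v^\pi_\alpha$, hence $\hat v \le v_\alpha$, while the reverse inequality follows from the $\epsilon$-optimal policy below.

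Finally, for both models I would construct the $\epsilon$-optimal nonrandomized stationary policy from the selection theorem. The map $h(x,a) = c(x,a) + \alpha\int v_\alpha(y)\,q(dy\mid x,a)$ is l.s.a.\ on $\Gamma$, so \cite[Prop.~7.50]{bs} gives, for any $\delta>0$, a u.m.\ selector $\mu(x)\in A(x)$ with $h(x,\mu(x)) \le (T_\alpha v_\alpha)(x) + \delta = v_\alpha(x)+\delta$. Then $\bar v := v_\alpha + \delta/(1-\alpha)$ is a supersolution of the single-policy operator $T_{\alpha,\mu}$, i.e.\ $T_{\alpha,\mu}\bar v \le \bar v$, so by the same comparison-and-tail arguments as above (minimality for (PC), the moment bound for (UC)) one gets $v^\mu_\alpha \le \bar v$; taking $\delta = \epsilon(1-\alpha)$ makes $\mu$ $\epsilon$-optimal for all initial states. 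I expect the genuinely delicate points to be the weight-shift that produces the contraction for (UC) and the uniform tail estimates that justify interchanging limits and identifying the abstract fixed point with the policy-defined value $v_\alpha$; the measurable-selection and l.s.a.-preservation steps are routine given \cite[Chap.~7]{bs}.
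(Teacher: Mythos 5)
There is a genuine gap in your (PC) argument: the identification $v_\alpha=\sup_n u_n$ with $u_n=T_\alpha^n 0$ is false in general for nonnegative-cost models without compactness conditions --- and avoiding such conditions is the whole point of the paper's setting. For the positive model one always has $\lim_n T_\alpha^n 0\le v_\alpha$, but the inequality can be strict, because the step $T_\alpha(\sup_n u_n)\le\sup_n T_\alpha u_n$ is an interchange of $\inf_a$ and $\sup_n$ that requires exactly the compactness hypotheses the paper drops (cf.\ the counterexamples accompanying the value-iteration results for model (P) in \cite[Chap.~9]{bs}). A concrete discounted instance: from a state $s$, action $k\in\{1,2,\dots\}$ starts a cost-free deterministic path of length $k$ ending in an absorbing state with per-stage cost $\alpha^{-k}(1-\alpha)$; then every policy from $s$ has discounted cost $1$, so $v_\alpha(s)=1$, while $u_N(s)=0$ for every $N$. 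This single error propagates through your whole (PC) treatment: the l.s.a.-ness of $v_\alpha$ no longer follows from ``increasing limit of l.s.a.\ functions,'' the inequality $T_\alpha v_\alpha\le v_\alpha$ is not obtained, and your minimality induction only shows $v\ge\sup_n u_n$, not $v\ge v_\alpha$. The paper instead invokes \cite[Props.~9.8, 9.10, 9.19]{bs} directly: there the DCOE and the lower semianalyticity of $v_\alpha$ are obtained through the deterministic-model correspondence (not value iteration), and minimality is proved by selecting, for a supersolution $v\ge T_\alpha v$ with $v\ge0$, near-optimal u.m.\ selectors and iterating $v\ge T_{\alpha,\mu_k}v-\epsilon_k$ along the resulting policy, discarding the nonnegative terminal term. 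Your final paragraph (the $\delta/(1-\alpha)$ construction of a stationary $\epsilon$-optimal policy) is exactly this kind of argument and is correct; the same mechanism, not value iteration, is what should carry the minimality and the inequality $v_\alpha\le T_\alpha v_\alpha$... rather, $T_\alpha v_\alpha \ge$-direction of the fixed-point identity.

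Your (UC) argument follows essentially the paper's own route (Remark~\ref{remark-dcoe}: a contraction of $T_\alpha$ on $\An(\X)\cap\M_{\tilde w}(\X)$ for a shifted weight $\tilde w\ge w$, then identification with the policy-defined value via the moment bound from Definition~\ref{def-uc}(b)), but your constant is off: with $W=w+c_0$ one gets $\lambda'=(\lambda+b+c_0)/(1+c_0)$, and $\lambda'<1$ holds only if $\lambda+b<1$, which is not assumed. Since $\lambda'\to1$ as $c_0\to\infty$, the construction is still salvageable for each fixed $\alpha$: choose $c_0$ large enough that the actual contraction modulus $\alpha\lambda'$ is below $1$ (this is why the paper states the modulus as some $\beta\in(\alpha,1)$ rather than $\alpha$ times a sub-unit constant). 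With that correction, and with the caveat that the identification $\hat v=v_\alpha$ over all u.m.\ (not just Markov) policies is what the deterministic-model correspondence is used for in \cite[Chap.~9]{bs}, the (UC) half is sound.
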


\begin{rem}[about the proof of Theorem~\ref{thm-dcoe}]  \label{remark-dcoe}
Under certain compactness and continuity conditions, proofs of the $\alpha$-DCOE for (PC) and (UC) can be found in e.g., the papers~\cite{FKZ12,Sch75} and the books \cite[Chap.\ 5]{HL96}, \cite[Chap.\ 8]{HL99}. In our case, Theorem~\ref{thm-dcoe} is proved by using the results of \cite[Part II]{bs} for general Borel-space MDPs. 
Specifically, for (PC), this theorem is implied by the optimality results for the nonnegative model~\cite[Props.\ 9.8, 9.10, 9.19]{bs}.
For (UC), it can be shown
\footnote{See \cite[Lem.~A.2, Appendix~A.4]{Yu19} for the proof. The construction of the weight function $\tilde w \geq w$ and the proof of this contraction property are similar to the analysis given in \cite[pp.\ 45-46]{HL99}.}
that for some u.m.\ weight function $\tilde w \geq w$, the operator $T_\alpha$ is a contraction on the closed subset $\An(\X) \cap \M_{\tilde w}(\X)$ of the Banach space $(\M_{\tilde w}(\X), \| \cdot\|_{\tilde w})$, with contraction modulus $\beta \in (\alpha, 1)$.
We use this contraction property of $T_\alpha$ 
together with the correspondence between the MDP and the so-called deterministic control model (DM) defined in~\cite[Chap.\ 9]{bs} to prove Theorem~\ref{thm-dcoe} for (UC). The proof can be found in \cite[Appendix~A.3]{Yu19}.
\footnote{The proof is similar to, but does not follow exactly the one given in \cite[Chap.\ 9]{bs} for bounded one-stage costs; see \cite[Remark~A.1, Appendix~A.3]{Yu19} for further explanations, including when one can reduce the case to that of bounded costs by using Veinott's similarity transformation \cite{vdW84,Vei69}.}
\myqed
\end{rem}

As another preparation for Section~\ref{sec-3}, the next lemma states an implication of the ACOI on the existence and structure of average-cost optimal or nearly optimal policies. For comparison, note that in the general case where $g^*$ need not be constant, one can only assert the existence of a randomized semi-Markov $\epsilon$-optimal policy (cf.~Theorem~\ref{thm-ac-basic}). The proof of this lemma uses mostly standard arguments and can be found in~\cite[Appendix~A.4]{Yu19}.

\begin{lemma}[a consequence of ACOI] \label{lem-optpol}
Consider the models (PC) and (UC) with the average cost criterion. 
Suppose that the optimal average cost function $g^*$ is constant and finite. 
Suppose also that for some real-valued $h \in \An(\X)$, with $h \geq 0$ for (PC) and $\|h\|_w < \infty$ for (UC), the ACOI holds: $g^* + h \geq T h$, i.e.,
\begin{equation} \label{eq-acoi-gen}
   g^* + h(x) \geq \inf_{a \in A(x)} \left\{ c(x, a) + \int_\X h(y) \, q(dy \mid x, a) \right\}, \qquad x \in \X.
\end{equation}   
Then there exist a nonrandomized Markov optimal policy and, for each $\epsilon > 0$, a nonrandomized stationary $\epsilon$-optimal policy. If, in addition, the infimum in the right-hand side of the ACOI is attained for every $x \in \X$, then there exists a nonrandomized stationary optimal policy. 
\end{lemma}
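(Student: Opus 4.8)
The plan is to convert the ACOI \eqref{eq-acoi-gen} into a pointwise one-stage inequality for a well-chosen policy and then telescope it along sample paths. First I would record that the map $F(x,a) := c(x,a) + \int_\X h(y)\, q(dy \mid x,a)$ is l.s.a.\ on $\Gamma$. Writing $h = h^+ - h^-$, the nonnegative l.s.a.\ part $h^+$ integrates to an l.s.a.\ function of $(x,a)$ by \cite[Prop.~7.48]{bs}, while the nonnegative upper-semi-analytic part $h^-$ integrates to an upper-semi-analytic function, whose negative is l.s.a.; since $h\ge 0$ makes this immediate for (PC), and for (UC) the bound $\|h\|_w < \infty$ together with Definition~\ref{def-uc}(b) keeps both integrals finite, $F$ is a finite sum of l.s.a.\ functions, hence l.s.a.\ \cite[Lem.~7.30]{bs}. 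By the measurable selection theorem for partial minimization of l.s.a.\ functions \cite[Prop.~7.50]{bs}, for each $\epsilon > 0$ there is a u.m.\ nonrandomized stationary $\mu_\epsilon$ with $F(x,\mu_\epsilon(x)) \le (Th)(x) + \epsilon$, and, when the infimum in \eqref{eq-acoi-gen} is attained for every $x$, a u.m.\ selector $\mu^*$ with $F(x,\mu^*(x)) = (Th)(x)$. Combined with the ACOI these give the key inequalities $F(x,\mu_\epsilon(x)) \le g^* + h(x) + \epsilon$ and $F(x,\mu^*(x)) \le g^* + h(x)$.

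Next I would telescope. More generally, for any nonrandomized Markov policy $\pi = (\nu_0,\nu_1,\dots)$ whose stage-$k$ selector satisfies $F(x,\nu_k(x)) \le g^* + h(x) + \epsilon_k$, the nonrandomized Markov structure gives $a_k = \nu_k(x_k)$ and $\int_\X h(y)\, q(dy \mid x_k,a_k) = \E^\pi_x[h(x_{k+1}) \mid x_k]$, so taking $\E^\pi_x$ and using the tower property yields $\E^\pi_x[c(x_k,a_k)] \le g^* + \epsilon_k + \E^\pi_x[h(x_k)] - \E^\pi_x[h(x_{k+1})]$. Summing over $k = 0,\dots,n-1$, the right-hand side telescopes to
$$ J_n(\pi,x) \le n g^* + \textstyle\sum_{k=0}^{n-1}\epsilon_k + h(x) - \E^\pi_x[h(x_n)]. $$
All expectations here are finite: for (PC) every term is nonnegative, while for (UC) $|h| \le \|h\|_w\, w$ and iterating Definition~\ref{def-uc}(b) gives $\E^\pi_x[w(x_n)] \le \lambda^n w(x) + b/(1-\lambda)$, which is bounded in $n$.

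I would then pass to the average cost by dividing by $n$ and taking $\limsup_{n\to\infty}$. Whenever $\epsilon_k \to 0$ the Cesàro average $\tfrac1n\sum_{k<n}\epsilon_k \to 0$, clearly $h(x)/n \to 0$, and $\E^\pi_x[h(x_n)]/n \to 0$ --- for (PC) because $-\E^\pi_x[h(x_n)] \le 0$ so this term only helps, and for (UC) because $\E^\pi_x[|h(x_n)|]$ is bounded by the $w$-estimate above. Hence $J(\pi,x) \le g^*$ for every $x$, and since $g^*(x) = g^*$ is the optimal value we also have $J(\pi,x) \ge g^*$, so $J(\pi,x) = g^*$. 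Specializing the construction then yields the three assertions: taking $\pi = \mu_\epsilon$ (constant error $\epsilon_k \equiv \epsilon$) gives $J(\mu_\epsilon,x) \le g^* + \epsilon$, the desired nonrandomized stationary $\epsilon$-optimal policy; taking $\nu_k = \mu_{\epsilon_k}$ with $\epsilon_k \downarrow 0$ gives a nonrandomized Markov optimal policy; and, when the infimum is attained, taking $\pi = \mu^*$ (i.e.\ $\epsilon_k \equiv 0$) gives a nonrandomized stationary optimal policy.

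I expect the only real obstacle to be the measurability bookkeeping in the (UC) case: ensuring that $F$ is genuinely l.s.a.\ despite $h$ being possibly unbounded below (handled by the $h^+/h^-$ splitting together with the finiteness guaranteed by $\|h\|_w < \infty$ and the Lyapunov bound), so that \cite[Prop.~7.50]{bs} legitimately produces a u.m.\ selector. The telescoping identity and the vanishing of $\E^\pi_x[h(x_n)]/n$ are routine once the relevant integrability is in place, and the lower bound $J(\pi,x)\ge g^*$ is immediate from the definition of $g^*$.
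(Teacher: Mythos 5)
Your proposal is correct and follows essentially the same standard route that the paper defers to \cite[Appendix~A.4]{Yu19}: a measurable-selection step via \cite[Prop.~7.50]{bs} applied to the l.s.a.\ function $(x,a)\mapsto c(x,a)+\int_\X h(y)\,q(dy\mid x,a)$ to produce (near-)minimizing nonrandomized selectors, followed by telescoping the one-stage inequality and killing the boundary and Ces\`aro terms in the limit. The only point worth tightening is that in the (PC) case the finiteness of $\E^\pi_x[h(x_k)]$ needed to justify the subtraction in the telescoping step comes from an induction on $k$ using the per-stage inequality starting from $\E^\pi_x[h(x_0)]=h(x)<\infty$ (nonnegativity of the terms alone does not give it), but this is routine.
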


\section{Main Results: The ACOI} \label{sec-3}

In this section, we introduce our majorization condition for the (PC) and (UC) models and study the ACOI via the vanishing discount factor approach. The arguments for (PC) and (UC) are similar but differ in details, so we will discuss the two models separately. We consider (UC) first.

\subsection{The (UC) Case} \label{sec-3.1}

Let $\bar x$ be some fixed state and consider the relative value functions of the $\alpha$-discounted problems:
\begin{equation} \label{eq-uc-ha}
h_\alpha(x) : = v_\alpha(x) - v_\alpha(\bar x), \qquad x \in \X.
\end{equation}
Take a sequence $\alpha_n \uparrow 1$ such that for some $\rho^* \in \R$,
\begin{equation} \label{eq-rho}
  (1 - \alpha_n) \, v_{\alpha_n}(\bar x) \to \rho^* \quad \text{as} \  n \to \infty.
\end{equation}  
(This is possible because the model conditions of (UC) imply that for each $x \in \X$, $(1 - \alpha) \, v_\alpha (x)$ is bounded in $\alpha$ over $(0,1)$; cf.\ \cite[sect.~10.4.A]{HL99}.)
Consider the corresponding sequence of functions 
$h_n : = h_{\alpha_n}$. 
Define 
\begin{align}
 \uh  : = \liminf_{n \to \infty} h_n, \qquad  \uh_n : = \inf_{m \geq n} h_m, \ \ \ n \geq 0,  \label{eq-uh} \\
 \bh  : = \limsup_{n \to \infty} h_n, \qquad  \bh_n : = \sup_{m \geq n} h_m, \ \ \ n \geq 0. \label{eq-bh}
\end{align}
Note that as $n \to \infty$,
$\uh_n \uparrow \uh$ and $\bh_n \downarrow \bh.$

\begin{assumption} \label{cond-uc-1}
For the model (UC), the set of functions $\{h_\alpha \mid \alpha \in (0,1)\}$ as defined above is bounded in $\M_w(\X)$, i.e., $\sup_{\alpha \in (0,1)} \|h_\alpha\|_w < \infty$.
\end{assumption}

This assumption is extracted from the analysis of the ACOI given in the book~\cite[Chap.\ 10.4]{HL99}, where the MDP is assumed to be uniformly $w$-geometrically ergodic, which ensures $\sup_{\alpha \in (0,1)} \|h_\alpha\|_w < \infty$. In Section~\ref{sec-bd-uc} we shall discuss this $w$-geometric ergodicity condition as well as other types of sufficient conditions for Assumption~\ref{cond-uc-1}.  

The next lemma follows directly from Theorem~\ref{thm-dcoe} and \cite[Lem.~7.30(2)]{bs}:

\begin{lemma} \label{lem-h-lsa-uc}
{\rm (UC)} Under Assumption~\ref{cond-uc-1}, 
all the functions $\uh$, $\bh$, $\uh_n$, $\bh_n$, $n \geq 0$, are l.s.a.\ and lie in a bounded subset of $\M_w(\X)$.
\end{lemma}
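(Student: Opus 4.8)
The plan is to verify the claim in two parts for each of the six functions $\uh$, $\bh$, $\uh_n$, $\bh_n$: first that each is lower semi-analytic (l.s.a.), and second that each lies in a common bounded ball of $\M_w(\X)$. The boundedness part is the easy half and I would dispatch it first. By Theorem~\ref{thm-dcoe}, each $v_{\alpha_n}$ is l.s.a., so each $h_n = h_{\alpha_n} = v_{\alpha_n} - v_{\alpha_n}(\bar x)$ is a difference of an l.s.a.\ function and a finite constant. Assumption~\ref{cond-uc-1} gives a uniform bound $M := \sup_{\alpha \in (0,1)} \|h_\alpha\|_w < \infty$, so in particular $\|h_n\|_w \leq M$ for all $n$, meaning $|h_n(x)| \leq M\,w(x)$ pointwise. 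Taking suprema and infima over $m \geq n$ preserves this pointwise bound (since $w > 0$), so $\|\uh_n\|_w \leq M$ and $\|\bh_n\|_w \leq M$; and the pointwise liminf/limsup defining $\uh$, $\bh$ inherit the same bound. Hence all six functions sit in the ball of radius $M$ in $\M_w(\X)$, which establishes the second assertion uniformly.

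For the l.s.a.\ claim, the key tool is \cite[Lem.~7.30(2)]{bs}, which (as the paper has already invoked) guarantees that pointwise limits of sequences of l.s.a.\ functions are l.s.a., and more generally that the l.s.a.\ class is closed under countable pointwise infima and the relevant monotone limit operations. I would argue in the natural order dictated by the definitions \eqref{eq-uh}–\eqref{eq-bh}. First, $\uh_n = \inf_{m \geq n} h_m$ is a countable infimum of l.s.a.\ functions; since the level sets of an infimum are unions (hence the epigraph structure is preserved under countable inf) I can conclude $\uh_n$ is l.s.a.\ directly from the closure properties in \cite[Lem.~7.30]{bs}. For $\bh_n = \sup_{m \geq n} h_m$, a countable supremum, l.s.a.\ is likewise preserved (a countable sup of l.s.a.\ functions is l.s.a.\ because the sublevel set $\{\bh_n \leq a\}$ is the countable intersection $\bigcap_{m \geq n}\{h_m \leq a\}$ of analytic sets, and a countable intersection of analytic sets is analytic). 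Finally, using the observed monotonicity $\uh_n \uparrow \uh$ and $\bh_n \downarrow \bh$, the functions $\uh = \lim_n \uh_n$ and $\bh = \lim_n \bh_n$ are pointwise limits of l.s.a.\ functions, so l.s.a.\ by \cite[Lem.~7.30(2)]{bs} again.

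The main point to watch is the supremum direction, since closure of the l.s.a.\ class under infima is immediate whereas closure under suprema is more delicate and holds only for \emph{countable} families. Here the families are indeed countable (indexed by the integers $m \geq n$), so the argument via sublevel sets $\{\bh_n \leq a\} = \bigcap_{m\geq n}\{h_m \leq a\}$ and the stability of analytic sets under countable intersections goes through; I would state this explicitly rather than appeal to a blanket ``limits preserve l.s.a.'' slogan. One should also confirm that the arithmetic producing $h_n$ from $v_{\alpha_n}$ stays inside the l.s.a.\ class: subtracting the real constant $v_{\alpha_n}(\bar x)$ is harmless, and finiteness of that constant is assured because the (UC) bounds make $v_{\alpha_n}$ real-valued (indeed $\|v_{\alpha_n}\|_w < \infty$ for each fixed $n$). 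Once these closure facts are lined up, the lemma is essentially a bookkeeping consequence of Theorem~\ref{thm-dcoe}, Assumption~\ref{cond-uc-1}, and \cite[Lem.~7.30(2)]{bs}, exactly as the one-line attribution in the statement suggests.
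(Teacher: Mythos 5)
Your proposal is correct and follows essentially the same route as the paper, which gives no separate argument but simply attributes the lemma to Theorem~\ref{thm-dcoe} (l.s.a.\ of each $v_{\alpha_n}$, hence of each $h_n$ after subtracting the finite constant $v_{\alpha_n}(\bar x)$) together with the closure of the l.s.a.\ class under countable infima, countable suprema, and pointwise limits from \cite[Lem.~7.30]{bs}, plus the trivial observation that the uniform bound $\|h_\alpha\|_w \leq M$ from Assumption~\ref{cond-uc-1} passes to pointwise infima, suprema, and limits. Your explicit treatment of the supremum case via $\{\bh_n \leq a\} = \bigcap_{m\geq n}\{h_m \leq a\}$ and the stability of analytic sets under countable intersections is exactly the right detail to make visible, so the write-up fills in precisely what the paper leaves implicit.
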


\subsubsection{Majorization Condition} \label{sec-uc-cond}

We now introduce our majorization condition for the (UC) model. The first two parts of this condition will also be used for the (PC) model later. Let $\ind(\cdot)$ denote the indicator function.

\begin{assumption} \label{cond-uc-2}
In (UC), for each $x \in \X$ and $\epsilon > 0$, the following hold:
\begin{itemize}[leftmargin=0.6cm,labelwidth=!]
\item[\rm (i)] There exist a subset $K_\epsilon(x) \subset A(x)$ and $0 < \bar \alpha  < 1$ such that for all $\alpha \in [\bar \alpha, 1)$,
\begin{equation} \label{cond-uc-2a}
 \inf_{a \in K_\epsilon(x)} \left\{ c(x, a) + \alpha \int_\X v_\alpha(y) \, q(dy \mid x, a) \right\} \leq v_\alpha(x) + \epsilon.  
\end{equation} 
\item[\rm (ii)] There exists a finite measure $\nu$ on $\B(\X)$ such that 
\begin{equation}
 \sup_{a \in K_\epsilon(x)} q( B \mid x, a) \leq \nu(B) \qquad \forall \, B \in \B(\X). \label{cond-uc-2b} 
\end{equation}
\item[\rm (iii)] The weight function $w(\cdot)$ is uniformly integrable w.r.t.\ $\{ q(dy \,|\, x, a) \mid a \in K_\epsilon(x) \}$ in the sense that
\begin{equation}
   \lim_{\ell \to \infty} \sup_{a \in K_\epsilon(x) }  \int_\X w(y) \, \ind\big[w(y) \geq \ell \, \big] \, q(dy \mid x, a) = 0. \label{cond-uc-2c}
\end{equation}
\end{itemize}
\end{assumption}

Note that in the above, $\bar \alpha$ and $\nu$ can depend on $x$ and $\epsilon$. Note also that the one-stage cost function $c(\cdot)$ is not required to have any special properties.  

Assumption~\ref{cond-uc-2}(i) is motivated by the theory on epi-convergence of functions, in particular, by the relation between the infima of epi-limits and pointwise limits of a sequence of functions and the implication of this relation on the validity of the ACOI in MDPs. In some circumstances, the existence of a compact subset $K_\epsilon(x)$ with the property (\ref{cond-uc-2a}) is close to being necessary for the ACOI. We shall elaborate on this in Section~\ref{sec-epilim} (cf.\ Proposition~\ref{prp-inf-epilim}, Cor.~\ref{cor-epilim-acoi}) after we prove the ACOI, since the roles of the above conditions can be better seen then.

The choice of the subset $K_\epsilon(x)$ of actions in Assumption~\ref{cond-uc-2}(i) is important for the subsequent parts (ii)-(iii) of this assumption. Although (i) is trivially satisfied if we let $K_\epsilon(x) = A(x)$, this makes it harder or sometimes impossible to satisfy (ii)-(iii). For instance, if $A(x) = \R$ and $q(dy \,|\, x, a)$ is the normal distribution $\mathcal{N}(a, 1)$ on $\R$, no finite measure can majorize these distributions for all $a \in \R$. On the other hand, identifying a proper subset of actions that satisfies (i) is in general a difficult problem, because it requires the knowledge of the family of optimal value functions $\{v_\alpha\}$. An additional difficulty in the (UC) case is that, unlike in the (PC) case, the cost function $c(x, \cdot)$ here must be bounded for each state $x$, preventing us from using ``coercivity'' of $c(x,\cdot)$ to eliminate actions. (For this reason, in the illustrative example given in Appendix~\ref{appsec-ex-uc} for the (UC) model, we will set $K_\epsilon(x) = A(x)$.)

Assumption~\ref{cond-uc-2}(ii) is the key condition that allows us to apply Egoroff's theorem in proving the ACOI. The class of MDPs for which Assumption~\ref{cond-uc-2}(ii) can hold naturally are those where for all $a \in K_\epsilon(x)$, $q(dy \,|\, x, a)$ has a density $f_{x,a}$ w.r.t.\ a common ($\sigma$-finite) reference measure $\varphi$. The pointwise supremum of the density functions, $f_x: = \sup_{a \in K_\epsilon(x)} f_{x,a}$ (or a measurable function that upper-bounds it), when it is integrable w.r.t.\ $\varphi$, 
defines a finite measure $\nu$, with $d\nu = f_x d \varphi$, that has the desired majorization property (\ref{cond-uc-2b}). 

Assumption~\ref{cond-uc-2}(ii) need not be satisfied by continuous state transition stochastic kernels. For example, if $A(x)=[0,1]$ and $q(dy \,|\, x, a) = \delta_a$ (the Dirac measure at $a$), there is no finite measure with the desired majorization property (there is also no nontrivial measure $\nu$ that can satisfy the minorization condition $q(dy \,|\, x, a) \geq \nu(dy)$ for all $a$ here). This is not a ``defect'' in the majorization condition but a reflection of the difference in nature of topological and measure-theoretic properties; a deeper discussion of Assumption~\ref{cond-uc-2}(ii) will be given in Section~\ref{sec-maj-wsc} to show what it entails. Due to this difference, however, there are some inevitable limitations in the analysis technique we present in this paper. 
We shall discuss this subject further in the example in Section~\ref{sec-ex-pc} (cf.\ Remark \ref{rmk-maj-density}).

Regarding Assumption~\ref{cond-uc-2}(iii), roughly speaking, in the proof of the ACOI, we will use it to handle what is leftover after the application of Egoroff's theorem. Verifying this condition can be straightforward when the weight function $w(\cdot)$ has a simple analytical expression (e.g., when $x \in \R$ and $w(x) = e^x$ or $x^2$) and the properties of the integrals involved are known. 
For example, if $w(\cdot)$ is bounded from above on the union of the supports of the probability measures $q(dy \mid x, a), a \in K_\epsilon(x)$, such as in the case where the union is contained in a compact set and $w(\cdot)$ is continuous, then Assumption~\ref{cond-uc-2}(iii) is clearly satisfied. If $c(\cdot)$ is bounded, $w(\cdot)$ can be chosen to be constant and Assumption~\ref{cond-uc-2}(iii) then holds trivially. 
Assumption~\ref{cond-uc-2}(iii) is also implied by the slightly stronger condition $\int w \, d \nu < \infty$, where $\nu$ is the majorizing finite measure in Assumption~\ref{cond-uc-2}(ii). 

Let us end this discussion by clarifying the relation between our majorization condition and the one from Gubenko and Shtatland's early work. Later we will discuss further the conditions involved in our proof of the ACOI and give illustrative examples in Section~\ref{sec-5} and Appendix~\ref{appsec-ex-uc}.

\begin{rem}[about the majorization condition in {\cite{GuS75}}] \label{rem-GuS-maj-cond}
In their contraction-based fixed point approach, Gubenko and Shtatland's majorization condition ~\cite[sect.~3, Condition~(II)]{GuS75} is like a symmetric counterpart of their minorization condition, and it requires that there exists a finite measure $\nu$ on $\B(\X)$ such that
\begin{equation} \label{eq-GuS-cond}
  q(B \mid x, a) \leq \nu(B) \quad \forall \, B \in \B(\X), \ (x, a) \in \Gamma,  \qquad \text{and} \qquad \nu(\X) < 2. 
\end{equation}  
Note that here the same measure $\nu$ needs to majorize $q(dy \,|\, x, a)$ for all states and admissible actions, whereas in our Assumption~\ref{cond-uc-2}, $\nu$ can be different for each state. The requirement $\nu(\X) < 2$ (needed for converting $T$ into a contraction) is too stringent and renders their condition (\ref{eq-GuS-cond}) impractical.\myqed
\end{rem}

\subsubsection{Optimality Results} \label{sec-uc-acoi-prf}

We now prove the ACOI for (UC) under the assumptions introduced earlier. 
Recall that the relative value functions $\{h_n\}$, the functions $\uh$, $\{\uh_n\}$, $\bh$, $\{\bh_n\}$, and also the scalar $\rho^* = \lim_{n \to \infty} (1 - \alpha_n) \, v_{\alpha_n} (\bar x)$ are defined in~(\ref{eq-uc-ha})-(\ref{eq-bh}).

\begin{theorem}[the ACOI for (UC)] \label{thm-uc-acoi}
Under Assumptions~\ref{cond-uc-1}, \ref{cond-uc-2} for the (UC) model, the optimal average cost function $g^*(\cdot) = \rho^*$ and with $\uh \in \An(\X) \cap \M_w(\X)$ as given in (\ref{eq-uh}), the pair $(\rho^*, \uh)$ satisfies the ACOI:
\begin{equation} \label{eq-uc-acoi}
  \rho^* + \uh(x) \geq \inf_{a \in A(x)} \left\{ c(x, a) + \int_\X \uh(y) \, q(dy \mid x, a) \right\}, \qquad x \in \X.
\end{equation}  
Hence there exist a nonrandomized Markov optimal policy and for each $\epsilon > 0$, a nonrandomized stationary $\epsilon$-optimal policy. 
\end{theorem}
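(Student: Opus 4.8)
The plan is to run the vanishing-discount argument from the $\alpha$-DCOE of Theorem~\ref{thm-dcoe}. Subtracting $v_{\alpha_n}(\bar x)$ from both sides of $v_{\alpha_n}=T_{\alpha_n}v_{\alpha_n}$ and writing $h_n=v_{\alpha_n}-v_{\alpha_n}(\bar x)$, one obtains the rescaled fixed-point identity $h_n(x)+(1-\alpha_n)\,v_{\alpha_n}(\bar x)=\inf_{a\in A(x)}\{\,c(x,a)+\alpha_n\int_\X h_n(y)\,q(dy\mid x,a)\,\}$. I would then invoke Assumption~\ref{cond-uc-2}(i) to trade the infimum over $A(x)$ for the infimum over the selected set $K_\epsilon(x)$ at an additive cost $\epsilon$, obtaining for all large $n$ the one-sided inequality $\inf_{a\in K_\epsilon(x)}\{\,c(x,a)+\alpha_n\int_\X h_n\,q\,\}\le h_n(x)+(1-\alpha_n)v_{\alpha_n}(\bar x)+\epsilon$. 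Here $\rho^*$ enters through $(1-\alpha_n)v_{\alpha_n}(\bar x)\to\rho^*$ and $\uh$ through $\liminf_n h_n=\uh$; the whole argument is designed to take $n\to\infty$ in this inequality and recover \eqref{eq-uc-acoi}.

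To lower-bound the left-hand infimum I would use the monotone minorants $\uh_m\le h_n$ for $n\ge m$ (recall $\uh_n\uparrow\uh$), which give $\inf_{a\in K_\epsilon(x)}\{c+\alpha_n\int h_n\,q\}\ge\inf_{a\in K_\epsilon(x)}\{c+\alpha_n\int\uh_m\,q\}$. Because $\|\uh_m\|_w<\infty$ by Lemma~\ref{lem-h-lsa-uc} and $\int_\X w\,q(dy\mid x,a)\le\lambda w(x)+b$ by Definition~\ref{def-uc}(b), the integrals $\int\uh_m\,q$ are bounded uniformly in $a\in K_\epsilon(x)$, so the factor $\alpha_n$ can be dropped in the limit uniformly in $a$; hence $\liminf_n$ of the left side is at least $\inf_{a\in K_\epsilon(x)}\{c(x,a)+\int_\X\uh_m(y)\,q(dy\mid x,a)\}$. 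Taking $\liminf_n$ in the displayed inequality and using $\liminf_n[h_n(x)+(1-\alpha_n)v_{\alpha_n}(\bar x)]=\uh(x)+\rho^*$, I obtain, for every fixed $m$, the bound $\inf_{a\in K_\epsilon(x)}\{c+\int\uh_m\,q\}\le\uh(x)+\rho^*+\epsilon$.

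The main obstacle is the final passage $m\to\infty$: since $\uh_m\uparrow\uh$ only pointwise, monotonicity gives $\inf_a\{c+\int\uh_m\,q\}\uparrow\sup_m\inf_a\{c+\int\uh_m\,q\}\le\inf_a\{c+\int\uh\,q\}$, i.e. the interchange $\lim_m\inf_a=\inf_a\lim_m$ is not automatic and points the wrong way. This is precisely where I would bring in Assumption~\ref{cond-uc-2}(ii)--(iii) together with Egoroff's theorem to upgrade the pointwise convergence $\uh_m\to\uh$ to convergence of the integrals that is \emph{uniform} in $a\in K_\epsilon(x)$. Concretely, I apply Egoroff to $\uh_m\to\uh$ on the finite measure space $(\X,\nu)$ to get, for each $\delta>0$, a set $E_\delta$ with $\nu(\X\setminus E_\delta)<\delta$ on which the convergence is uniform; the majorization $q(\cdot\mid x,a)\le\nu$ of Assumption~\ref{cond-uc-2}(ii) forces $q(\X\setminus E_\delta\mid x,a)<\delta$ uniformly over $a\in K_\epsilon(x)$, so the $E_\delta$-part of $\int(\uh-\uh_m)\,q$ vanishes uniformly as $m\to\infty$; and on $\X\setminus E_\delta$, bounding $|\uh-\uh_m|\le C\,w$ for a constant $C$ and splitting at a truncation level $\ell$, the uniform-integrability condition \eqref{cond-uc-2c} bounds $\int_{\X\setminus E_\delta}w\,q$ by $\ell\,\delta+\sup_{a}\int_{\{w\ge\ell\}}w\,q$, which is arbitrarily small. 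Thus $\sup_{a\in K_\epsilon(x)}|\int\uh_m\,q-\int\uh\,q|\to0$, so $\inf_{a\in K_\epsilon(x)}\{c+\int\uh_m\,q\}\to\inf_{a\in K_\epsilon(x)}\{c+\int\uh\,q\}$ and the bound survives: $\inf_{a\in K_\epsilon(x)}\{c+\int\uh\,q\}\le\uh(x)+\rho^*+\epsilon$. Since $K_\epsilon(x)\subset A(x)$ and $\epsilon>0$ is arbitrary, this is the ACOI \eqref{eq-uc-acoi}, with $\uh\in\An(\X)\cap\M_w(\X)$ by Lemma~\ref{lem-h-lsa-uc}.

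It remains to identify $g^*\equiv\rho^*$ and deduce the policy statements. For the upper bound I would read off from \eqref{eq-uc-acoi} a nonrandomized stationary policy $\mu_\epsilon$ attaining the right-hand infimum within $\epsilon$, telescope $\rho^*+\uh\ge c(\cdot,\mu_\epsilon)+\int\uh\,q$ along the chain it induces, and use $|\uh|\le C\,w$ with the Lyapunov bound of Definition~\ref{def-uc}(b) to force $\tfrac1n\E^{\mu_\epsilon}_x[\uh(x_n)]\to0$, giving $J(\mu_\epsilon,x)\le\rho^*+\epsilon$ and hence $g^*(x)\le\rho^*$ for all $x$. For the matching lower bound I would use the elementary Tauberian inequality $J(\pi,x)\ge\liminf_{\alpha\uparrow1}(1-\alpha)v^\pi_\alpha(x)\ge\liminf_{\alpha\uparrow1}(1-\alpha)v_\alpha(x)$, valid for every $\pi$, and note that $(1-\alpha)h_\alpha(x)\to0$ (from Assumption~\ref{cond-uc-1}) makes the last quantity independent of $x$; choosing the sequence $\alpha_n$ in \eqref{eq-rho} to realize this liminf identifies it with $\rho^*$, so $g^*(x)\ge\rho^*$ as well. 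Therefore $g^*\equiv\rho^*$ is constant and finite, and since the ACOI holds for $\uh\in\An(\X)\cap\M_w(\X)$, Lemma~\ref{lem-optpol} delivers at once a nonrandomized Markov optimal policy and, for each $\epsilon>0$, a nonrandomized stationary $\epsilon$-optimal policy.
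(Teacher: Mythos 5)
Your derivation of the ACOI itself is essentially the paper's argument in a slightly reorganized form. The paper replaces $h_n$ by the tail infimum $\uh_n$ inside the DCOE and then passes to the limit in $n$ via Lemma~\ref{lem-uc-2acoi}; you instead fix $m$, use $\uh_m\le h_n$ for $n\ge m$, take $\liminf_n$, and only then send $m\to\infty$. The two versions are interchangeable precisely because the crucial step is identical in both: Egoroff's theorem on the finite majorizing measure $\nu$, the bound $q(\X\setminus E_\delta\mid x,a)\le\nu(\X\setminus E_\delta)<\delta$ uniformly over $a\in K_\epsilon(x)$, and the truncation estimate $\int_{\X\setminus E_\delta}w\,q\le\ell\delta+\sup_a\int_{\{w\ge\ell\}}w\,q$ from Assumption~\ref{cond-uc-2}(iii), yielding $\sup_{a\in K_\epsilon(x)}\int(\uh-\uh_m)\,q\to0$ and hence the interchange of $\lim$ and $\inf$. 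This is exactly the content of Lemma~\ref{lem-uc-2acoi}, so that part of your proposal is correct and not genuinely different.

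Where you do diverge is the identification $g^*\equiv\rho^*$. The paper proves the reverse inequality $\rho^*+\bh\le T\bh$ for $\bh=\limsup_n h_n$ (a routine limsup/dominated-convergence pass through the DCOE) and deduces $J(\pi,x)\ge\rho^*$ for every $\pi$; you instead invoke an Abelian (Tauberian-type) comparison of Ces\`aro and Abel means. That route is viable under (UC), since $k\mapsto\E^\pi_x[c(x_k,a_k)]$ is bounded, but as written it has a gap: the inequality you state, $J(\pi,x)\ge\liminf_{\alpha\uparrow1}(1-\alpha)v_\alpha(x)$, only identifies $g^*$ with $\liminf_{\alpha\uparrow1}(1-\alpha)v_\alpha(\bar x)$, so you obtain $g^*=\rho^*$ only for sequences $\{\alpha_n\}$ that realize that liminf. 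The theorem, however, is asserted for an arbitrary sequence satisfying (\ref{eq-rho}), whose limit $\rho^*$ could a priori exceed the liminf; your upper bound $g^*\le\rho^*$ then does not close to an equality. The fix is to use the stronger half of the Abelian chain for bounded sequences, $\limsup_n\tfrac1n\sum_{k<n}c_k\ge\limsup_{\alpha\uparrow1}(1-\alpha)\sum_k\alpha^k c_k$, which gives $J(\pi,x)\ge\limsup_{\alpha\uparrow1}(1-\alpha)v_\alpha(\bar x)\ge\rho^*$ for \emph{every} admissible sequence; with that one change your argument matches the theorem's full claim (and, combined with the upper bound, shows a posteriori that $\lim_{\alpha\uparrow1}(1-\alpha)v_\alpha(\bar x)$ exists, as the paper also notes). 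The remaining items --- measurable selection of $\mu_\epsilon$ and the telescoping/Lyapunov argument for $J(\mu_\epsilon,\cdot)\le\rho^*+\epsilon$ --- are exactly what Lemma~\ref{lem-optpol} packages, so citing it directly, as you do at the end, suffices.
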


Note that this theorem implies that $\rho^*$ does not depend on our choice of the sequence $\{\alpha_n\}$ or the state $\bar x$, and
$\lim_{\alpha \to 1} (1 - \alpha) \, v_\alpha(x) = g^*$ (the constant optimal average cost) for all $x \in \X$. 

This theorem can be compared with the prior ACOI result for (UC) with compactness/continuity conditions given in the book~\cite[Thm.~10.3.1]{HL99}.

In the case of bounded one-stage costs, another existing result due to Ross \cite[Thm.~2]{Ros68} says that when $\A$ is finite, if $\{h_\alpha\}$ is uniformly bounded and equicontinuous, then the ACOE holds for a continuous function $h$ and constant $g^*$. For comparison, treating the case of bounded one-stage costs as a special case of (UC) with $w(\cdot) \equiv 1$, we have the following corollary from Theorem~\ref{thm-uc-acoi}:

\begin{cor} \label{cor-bc-acoi}
In an MDP with bounded $c(\cdot)$, suppose that for each $x \in \X$, there is a finite measure $\nu$ on $\B(\X)$ such that $\sup_{a \in A(x)} q(B \,|\, x, a) \leq \nu(B)$ for all $B \in \B(\X)$. Then, if $\{h_\alpha\}$ is uniformly bounded, the ACOI (\ref{eq-uc-acoi}) holds.
\end{cor}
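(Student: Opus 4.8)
The plan is to derive this corollary directly from Theorem~\ref{thm-uc-acoi} by embedding the bounded-cost MDP into the (UC) framework with the constant weight function $w(\cdot) \equiv 1$, and then checking that the two hypotheses of that theorem---Assumptions~\ref{cond-uc-1} and \ref{cond-uc-2}---collapse to the conditions stated in the corollary. No new analysis is needed; the work is entirely in matching definitions.

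First I would verify that the MDP belongs to the (UC) model of Definition~\ref{def-uc} with $w \equiv 1$. If $|c| \leq M$ on $\Gamma$, then part~(a) holds with $\hat c = M$, and part~(b) holds with $\lambda = 0$ and $b = 1$, since $\int_\X w(y)\, q(dy \mid x, a) = q(\X \mid x, a) = 1$ for every $(x,a)$. In particular $v_\alpha$ is finite everywhere (bounded by $M/(1-\alpha)$), so each $h_\alpha = v_\alpha - v_\alpha(\bar x)$ is well defined and l.s.a., and the standing construction of the sequence $\alpha_n \uparrow 1$ and the scalar $\rho^*$ in (\ref{eq-rho}) applies. Assumption~\ref{cond-uc-1} is then immediate: with $w \equiv 1$ we have $\|h_\alpha\|_w = \sup_{x} |h_\alpha(x)|$, so the uniform boundedness of $\{h_\alpha\}$ gives $\sup_{\alpha} \|h_\alpha\|_w < \infty$ at once.

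The core of the reduction is verifying Assumption~\ref{cond-uc-2} with the global choice $K_\epsilon(x) = A(x)$ for every $x$ and $\epsilon$. For part~(i), taking the infimum over all of $A(x)$ turns the left-hand side of (\ref{cond-uc-2a}) into $(T_\alpha v_\alpha)(x)$, which equals $v_\alpha(x)$ by the $\alpha$-DCOE of Theorem~\ref{thm-dcoe}; hence (\ref{cond-uc-2a}) holds trivially (with any $\bar\alpha \in (0,1)$, and in fact even with $\epsilon = 0$). Part~(ii) with $K_\epsilon(x) = A(x)$ is exactly the majorization hypothesis of the corollary. For part~(iii), since $w \equiv 1$ we have $\ind[w(y) \geq \ell] \equiv 0$ for every $\ell > 1$, so the integral in (\ref{cond-uc-2c}) vanishes identically once $\ell > 1$ and its limit is $0$.

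With all the hypotheses of Theorem~\ref{thm-uc-acoi} now in force, the conclusion follows directly: $g^*(\cdot) = \rho^*$ is constant and finite, $\uh \in \An(\X) \cap \M_w(\X)$, and the ACOI (\ref{eq-uc-acoi}) holds. I anticipate no genuine obstacle, as the statement is essentially a specialization; the only point deserving a moment's care is confirming that the global choice $K_\epsilon(x) = A(x)$ renders Assumption~\ref{cond-uc-2}(i) automatic through the discounted optimality equation. This is precisely the reason the corollary can dispense with the delicate action-selection step that complicates the general (UC) setting: boundedness of $c$ removes the need to trim the action set, so the majorization need only be checked on all of $A(x)$.
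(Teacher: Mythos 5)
Your proposal is correct and follows exactly the route the paper intends: the corollary is stated as a direct specialization of Theorem~\ref{thm-uc-acoi} to the (UC) model with $w(\cdot)\equiv 1$ and $K_\epsilon(x)=A(x)$, under which Assumption~\ref{cond-uc-1} reduces to uniform boundedness of $\{h_\alpha\}$ and Assumptions~\ref{cond-uc-2}(i),(iii) become trivial via the $\alpha$-DCOE and the vanishing of $\ind[w(y)\geq \ell]$ for $\ell>1$. Your verification of the (UC) model conditions with $\hat c = M$, $\lambda=0$, $b=1$ matches the paper's own remarks in Section~\ref{sec-uc-cond}.
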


\begin{rem} \label{rem-uc-acoe}
It will be obvious from the proof of Theorem~\ref{thm-uc-acoi} that if we have $\uh = \lim_{n \to \infty} h_n$ in addition, then the ACOI (\ref{eq-uc-acoi}) holds with equality.
The extra condition on the convergence of $\{h_n\}$ can be stringent and hard to check in practice, however.
A discussion about existing results on the ACOE in the (UC) case and whether Theorem~\ref{thm-uc-acoi} can be strengthened to ACOE will be given in Section~\ref{sec-bd-uc}.
 \myqed
\end{rem}

We now proceed to prove Theorem~\ref{thm-uc-acoi}. First, we prove an important lemma, in which we combine the majorization condition with Egoroff's theorem. (An alternative proof of this lemma will be given in Section~\ref{sec-maj-wsc}.)

\begin{lemma} \label{lem-uc-2acoi}
{\rm (UC)} Let Assumptions~\ref{cond-uc-1},~\ref{cond-uc-2}(ii)-(iii) hold, and let $K_\epsilon(x) \subset A(x)$ be the set in the latter condition for a given $x \in \X$ and $\epsilon > 0$. Then
\begin{align*}
& \lim_{n \to \infty} \inf_{a \in K_\epsilon(x)} \left\{ c(x, a) + \alpha_n \int_\X \uh_n(y) \, q(dy \mid x, a) \right\}  
 = \inf_{a \in K_\epsilon(x)} \left\{ c(x, a) +  \int_\X \uh(y) \, q(dy \mid x, a) \right\}.
\end{align*}
\end{lemma}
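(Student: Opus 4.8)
The plan is to reduce everything to a single uniform estimate: I would show that the functionals $a \mapsto \Phi_n(a) := c(x,a) + \alpha_n \int_\X \uh_n \, q(dy\mid x,a)$ converge to $a \mapsto \Phi(a) := c(x,a) + \int_\X \uh \, q(dy\mid x,a)$ \emph{uniformly} over $a \in K_\epsilon(x)$, after which convergence of the infima is immediate from the elementary bound $|\inf_a \Phi_n(a) - \inf_a \Phi(a)| \le \sup_a |\Phi_n(a) - \Phi(a)|$. Since $c(x,\cdot)$ cancels in the difference, it suffices to control the integral terms. Writing $g_n := \uh - \uh_n \ge 0$, which decreases to $0$ pointwise by the monotonicity $\uh_n \uparrow \uh$, I would decompose
$$\int_\X \uh \, q(dy\mid x,a) - \alpha_n \int_\X \uh_n \, q(dy\mid x,a) = \int_\X g_n \, q(dy\mid x,a) + (1-\alpha_n) \int_\X \uh_n \, q(dy\mid x,a).$$
By Assumption~\ref{cond-uc-1} and Lemma~\ref{lem-h-lsa-uc} there is a constant $C$ with $|\uh_n|, |\uh| \le C w$ for all $n$, so every integral above is finite and all functionals are bounded uniformly in $a$ and $n$.

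The second term is the easy one. Using $|\uh_n| \le C w$ and the Lyapunov bound (b) of Definition~\ref{def-uc}, I obtain $\big| (1-\alpha_n) \int_\X \uh_n \, q(dy\mid x,a) \big| \le (1-\alpha_n)\, C\,(\lambda w(x) + b)$, which tends to $0$ as $n \to \infty$ uniformly in $a \in K_\epsilon(x)$ because $\alpha_n \uparrow 1$.

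The heart of the argument is to show $\sup_{a \in K_\epsilon(x)} \int_\X g_n \, q(dy \mid x,a) \to 0$, and this is precisely where the majorization condition and Egoroff's theorem combine. Fixing $\eta > 0$, I would first invoke the uniform integrability Assumption~\ref{cond-uc-2}(iii) to choose a threshold $\ell$ with $\sup_{a \in K_\epsilon(x)} \int_\X w \,\ind[w \ge \ell]\, q(dy\mid x,a) < \eta$; next set $\delta := \eta/\ell$; then apply Egoroff's theorem to the pointwise-null sequence $g_n$ on the \emph{finite} measure space $(\X, \B(\X), \nu)$ (the $g_n$ are u.m., hence measurable w.r.t.\ the completion of $\nu$, so Egoroff applies) to obtain a set $E$ with $\nu(\X \setminus E) < \delta$ on which $g_n \to 0$ uniformly. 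Splitting $\int_\X g_n \, q = \int_E g_n \, q + \int_{\X \setminus E} g_n \, q$, the first piece is at most $\big(\sup_{y \in E} g_n(y)\big)\, q(E \mid x,a) \le \sup_{y\in E} g_n(y) \to 0$ uniformly in $a$; for the second piece I use $0 \le g_n \le 2Cw$ together with the majorization (ii), namely $q(\X\setminus E \mid x,a) \le \nu(\X \setminus E) < \delta$, to bound it by $2C\big(\int_\X w\,\ind[w\ge \ell]\,q + \ell\, q(\X\setminus E\mid x,a)\big) \le 2C(\eta + \ell\delta) = 4C\eta$, uniformly in $a$. Hence $\limsup_n \sup_a \int_\X g_n \, q \le 4C\eta$, and letting $\eta \downarrow 0$ finishes this step.

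Combining the two terms yields $\sup_{a \in K_\epsilon(x)} |\Phi_n(a) - \Phi(a)| \to 0$, and hence the claimed convergence of the infima. I expect the single genuinely delicate point to be the $\X \setminus E$ estimate above: this is where the three ingredients must be balanced in the correct order. Uniform integrability first fixes $\ell$ to absorb the unboundedness of $w$; the majorizing measure $\nu$ then transfers the Egoroff ``largeness'' of $E$ into a uniform-in-$a$ smallness of $q(\X\setminus E \mid x,a)$; and only the bounded part of $w$ on $\X\setminus E$ is paid for by $\nu(\X\setminus E)$. Getting the quantifier order $(\eta, \ell, \delta, E, n)$ right is the crux, and everything else is routine bookkeeping.
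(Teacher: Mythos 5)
Your proof is correct and follows essentially the same route as the paper's: Egoroff's theorem applied on the finite measure space $(\X,\nu)$, the majorization $q(\cdot\mid x,a)\leq\nu(\cdot)$ to transfer the smallness of $\nu(\X\setminus E)$ uniformly over $a\in K_\epsilon(x)$, and the uniform integrability of $w$ to control the tail on $\X\setminus E$, with exactly the quantifier order $(\eta,\ell,\delta,E,n)$ you identify as the crux. The only (immaterial) differences are that you package the estimate as uniform convergence of the full functionals $\Phi_n$ and invoke the elementary bound on infima, whereas the paper bounds $\sup_{a}\int(\uh-\uh_n)\,q$ directly and cites Dudley's characterization of uniform integrability for the step you carry out explicitly via the split at the threshold $\ell$.
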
 

\begin{proof} 
For the state $x$, $\epsilon > 0$, and the set $K_\epsilon(x)$ given in the lemma, let $\nu$ be the corresponding finite measure on $\B(\X)$ in Assumption~\ref{cond-uc-2}(ii). Recall that $\uh_n \uparrow \uh$ and these functions are real-valued and u.m.\ (Lemma~\ref{lem-h-lsa-uc}). Therefore, 
by Egoroff's Theorem~\cite[Thm.~7.5.1]{Dud02},
\footnote{Egoroff's Theorem~\cite[Thm.~7.5.1]{Dud02}: Let $(X,\mathcal{S},\nu)$ be finite measure space. Let $f_n$ and $f$ be measurable functions from $X$ into a separable metric space. Suppose $f_n(x) \to f(x)$ for $\nu$-almost all $x$. Then for any $\delta > 0$, there is a set $D$ with $\nu(X \setminus D) < \delta$ such that $f_n \to f$ uniformly on $D$.\label{footnote-Egroff}}
for any $\delta > 0$, there exists a u.m.\ set $D_\delta \subset \X$ with $\nu(\X \setminus D_\delta) < \delta$ such that on the set $D_\delta$, $\uh_n$ converges to $\uh$ uniformly as $n \to \infty$.
Consequently, for any $\eta > 0$, it holds for all $n$ sufficiently large that
\begin{equation} \label{eq-prf-thm-uc1a}
   \int_{D_\delta} \big(\uh(y) - \uh_n(y) \big)\, q(dy \mid x, a) \leq  \eta \qquad \forall \, a \in  A(x).
\end{equation}
We now bound the integral of $\uh - \uh_n$ on the complement set $\X \setminus D_\delta$. 
By Lemma~\ref{lem-h-lsa-uc}, for all $n \geq 0$, $\| \uh - \uh_n \|_w \leq \ell$ for some constant $\ell$. So for all $a \in A(x)$,
$$ \int_{\X \setminus D_\delta} \big(\uh(y) - \uh_n(y) \big)\, q(dy \mid x, a) \, \leq  \, \ell  \int_{\X \setminus D_\delta}  w(y) \, q(dy \mid x, a).$$
By the choice of $D_\delta$ and the majorization property of $\nu$ in Assumption~\ref{cond-uc-2}(ii),
\begin{equation} \label{eq-prf-thm-uc1b}
 \sup_{a \in K_\epsilon(x)} q\big(\X \setminus D_\delta \mid x, a\big)  \leq \nu\big(\X \setminus D_\delta\big) < \delta.
\end{equation} 
By an alternative characterization of uniform integrability~\cite[Thm.~10.3.5]{Dud02}, (\ref{eq-prf-thm-uc1b}) together with the uniform integrability condition in Assumption~\ref{cond-uc-2}(iii) implies that for any given $\eta > 0$, it holds for all $\delta$ sufficiently small that 
$\int_{\X \setminus D_\delta}  w(y) \, q(dy \mid x, a) \leq \eta$ for all $a \in K_\epsilon(x)$.
Therefore, given $\eta > 0$, by choosing a small enough $\delta$, we can make
\begin{equation} \label{eq-prf-thm-uc1c}
 \sup_{a \in K_\epsilon(x)} \int_{\X \setminus D_\delta} \big(\uh(y) - \uh_n(y) \big)\, q(dy \mid x, a) \leq \eta.
\end{equation} 
Combining this with (\ref{eq-prf-thm-uc1a}), we obtain that for all $n$ sufficiently large,
$$ \sup_{a \in K_\epsilon(x)}  \int_\X  \big(\uh(y) - \uh_n(y) \big)\, q(dy \mid x, a) \leq 2 \eta$$
and hence
\begin{align*}
  & \inf_{a \in K_\epsilon(x)} \left\{ c(x, a) + \alpha_n \int_\X \uh_n(y) \, q(dy \mid x, a) \right\} 
  \geq \inf_{a \in K_\epsilon(x)} \left\{ c(x, a) + \alpha_n \int_\X \uh(y) \, q(dy \mid x, a) \right\} - 2 \eta.
\end{align*}  
Since $\eta$ is arbitrary and $\uh_n \leq \uh$, the lemma follows by letting $n \to \infty$ on both sides of the preceding inequality and using also the fact that 
$$ (1 - \alpha_n) \sup_{a \in K_\epsilon(x)}  \int_\X |\uh(y)| \, q(dy \mid x, a) \to 0$$
since $\sup_{a \in K_\epsilon(x)}  \int_\X |\uh(y) | \, q(dy \mid x, a) < \infty$ by Assumption~\ref{cond-uc-1} and the model condition of (UC). 
\end{proof}

\begin{proof}[Proof of Theorem~\ref{thm-uc-acoi}]
For each $x \in \X$ and $\epsilon > 0$, by the $\alpha$-DCOE (Theorem~\ref{thm-dcoe}) and Assumption~\ref{cond-uc-2}(i), for all $n$ sufficiently large
\begin{align}
   ( 1 - \alpha_n) \, v_{\alpha_n}(\bar x) + h_n(x) \, & = \, \inf_{a \in A(x)} \left\{ c(x, a) + \alpha_n \int_\X h_n(y) \, q(dy \mid x, a) \right\} \label{eq-prf-thm-uc-oe} \\
   & \geq \,  \inf_{a \in K_\epsilon(x)} \left\{ c(x, a) + \alpha_n \int_\X h_n(y) \, q(dy \mid x, a) \right\} - \epsilon \notag \\
   & \geq \,  \inf_{a \in K_\epsilon(x)} \left\{ c(x, a) + \alpha_n \int_\X \uh_n(y) \, q(dy \mid x, a) \right\} - \epsilon, \label{eq-prf-thm-uc0a}
\end{align}   
where the last inequality used the fact $\uh_n \leq h_n$ and that $\uh_n$ is u.m.\ (Lemma~\ref{lem-h-lsa-uc}).
Letting $n \to \infty$ in both sides of (\ref{eq-prf-thm-uc0a}),
we have 
\begin{align*}
 \rho^* + \uh(x) + \epsilon & \, \geq \,  \liminf_{n \to \infty} \inf_{a \in K_\epsilon(x)} \left\{ c(x, a) + \alpha_n \int_\X \uh_n(y) \, q(dy \mid x, a) \right\} \\
   & \, = \,  \inf_{a \in K_\epsilon(x)} \left\{ c(x, a) +  \int_\X \uh(y) \, q(dy \mid x, a) \right\}  \\
   & \, \geq \, \inf_{a \in A(x)} \left\{ c(x, a) +  \int_\X \uh(y) \, q(dy \mid x, a) \right\},
\end{align*}
where the equality follows from Lemma~\ref{lem-uc-2acoi}. Since this holds for every $x \in \X$ and $\epsilon$ is arbitrary, the desired inequality (\ref{eq-uc-acoi}) is proved.

To show $g^*(\cdot) = \rho^*$, as in the analysis in~\cite{JaN06}, it suffices to show that for the pair $(\rho^*, \bh)$, where $\bh = \limsup_{n \to \infty} h_n$ as we recall, the opposite inequality holds:
\begin{equation} \label{eq-prf-thm-uc-2a} 
    \rho^* + \bh(x) \leq \inf_{a \in A(x)} \left\{ c(x, a) + \int_\X \bh(y) \, q(dy \mid x, a) \right\}, \qquad x \in \X.
\end{equation}    
This will yield $g^*(\cdot) = \rho^*$ because the inequality (\ref{eq-prf-thm-uc-2a}) implies that for every state $x$, every policy $\pi$ has average cost $J(\pi,x) \geq \rho^*$, whereas the inequality (\ref{eq-uc-acoi}) just proved implies the existence of a policy $\pi$ with $J(\pi,x) \leq \rho^*$ (the proofs of these two facts are standard).
From the $\alpha$-DCOE (\ref{eq-prf-thm-uc-oe}), we have that for all $a \in A(x)$, 
\begin{equation}
( 1 - \alpha_n) \, v_{\alpha_n}(\bar x) + h_n(x) \, \leq \, c(x, a) + \alpha_n \int_\X h_n(y) \, q(dy \mid x, a),
\end{equation}
so by taking limit supremum of both sides as $n \to \infty$ and using also the fact $\bh_n \geq h_n$ by definition, we have
$$ \rho^* + \bh(x) \leq c(x, a) + \limsup_{n \to \infty} \alpha_n \int_\X \bh_n(y) \, q(dy \mid x, a) = c(x, a) +  \int_\X \bh(y) \, q(dy \mid x, a),$$
where the last equality follows from the dominated convergence theorem, in view of Lemma~\ref{lem-h-lsa-uc} and the model condition of (UC). 
This proves (\ref{eq-prf-thm-uc-2a}) and hence $g^*(\cdot) = \rho^*$ as discussed earlier. Finally, that $\uh \in \An(\X) \cap \M_w(\X)$ follows from Lemma~\ref{lem-h-lsa-uc}, and the existence of a Markov optimal policy and stationary $\epsilon$-optimal policy follows from the ACOI proved above and Lemma~\ref{lem-optpol}.
\end{proof}

The simple ACOE result mentioned in Remark~\ref{rem-uc-acoe} follows from the ACOI just proved and the inequality (\ref{eq-prf-thm-uc-2a}) in the proof above since $\uh = \bh$ in that case.

\subsection{The (PC) Case} \label{sec-pc-acoi}

We now consider the nonnegative cost model (PC). Let 
$$\textstyle{m_\alpha : = \inf_{x \in \X} v_\alpha(x)}, \qquad \alpha \in (0,1).$$
We shall work under the following assumption taken from the ACOI literature:

\begin{assumption} \label{cond-pc-1}
For the model (PC): 
\begin{enumerate}[leftmargin=0.6cm,labelwidth=!]
\item[\rm (G)] For some policy $\pi$ and state $x$, the average cost $J(\pi,x) < \infty$.
\item[\rm (\underline{B})] For every $x \in \X$, $\liminf_{\alpha \uparrow 1} (v_\alpha(x) - m_\alpha) < \infty$.
\end{enumerate}
\end{assumption}

\begin{rem} 
Precursors to these conditions were introduced in the early works of Sennott \cite{Sen89} and Sch{\"a}l~\cite{Sch93} and then evolved as the research progressed. 
The condition (G) is the same as that in~\cite{Sch93} and by \cite[Lem.~1.2(b)]{Sch93}, it implies that 
$$ \textstyle{ \limsup_{\alpha \to 1} \, (1 - \alpha) \, m_\alpha \leq \inf_{x \in \X} g^*(x) < \infty.}$$ 
The condition (\underline{B}) was introduced by Feinberg, Kasyanov, and Zadoianchuk~\cite{FKZ12} to weaken the condition (B) in \cite{Sch93}, which is 
\begin{equation} \label{pc-cond-B}
 \textstyle{\sup_{\alpha \in (0,1)}  \big( v_\alpha(x) - m_\alpha \big) = \sup_{\alpha \in (0,1)} h_\alpha(x) < \infty} \qquad \forall \, x \in \X.
\end{equation} 
Under (G), (\ref{pc-cond-B}) is equivalent to $\limsup_{\alpha \uparrow 1} h_\alpha(x) < \infty$ for $x \in \X$~\cite[Lem.~5]{FKZ12}.
Sennott first showed in her work on countable-space MDPs~\cite{Sen89} that in proving the ACOI, it suffices to require the family of relative value functions to be bounded from above pointwise instead of uniformly.
\myqed
\end{rem}

Let 
\begin{equation} \label{eq-pc-uh}
    h_\alpha : = v_\alpha - m_\alpha, \qquad  \uh : = \liminf_{\alpha \to 1} h_\alpha.
\end{equation}    
For each $\alpha \in (0, 1)$, define a function $\uh_\alpha$ as 
\begin{equation}
    \uh_\alpha(x) : = \inf_{\beta \in [\alpha, 1)} h_\beta(x), \qquad x \in \X.
\end{equation}    
Note that
\begin{equation}
\uh_\alpha \leq \uh_\beta \quad \forall \, \alpha \leq \beta < 1 , \qquad \text{and} \qquad \uh_\alpha \uparrow \uh \quad \text{as} \ \alpha \uparrow 1.
\end{equation}
Before proceeding, let us prove that these functions are l.s.a., so that we can take their integrals in the subsequent analysis.

\begin{lemma} \label{lem-h-lsa-pc}
{\rm (PC)} Under Assumption~\ref{cond-pc-1}, the functions $\uh$ and $\uh_\alpha$, $\alpha \in (0, 1)$, are real-valued and l.s.a.
\end{lemma}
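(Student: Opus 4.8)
The plan is to establish two things: that $\uh_\alpha$ and $\uh$ are l.s.a., and that they are real-valued. For the measurability, I would work with the monotone structure already set up in the excerpt. Recall $h_\beta = v_\beta - m_\beta$, where $v_\beta$ is l.s.a.\ by Theorem~\ref{thm-dcoe} and $m_\beta = \inf_{x} v_\beta(x)$ is a real constant (so $h_\beta$ is l.s.a.\ as the difference of an l.s.a.\ function and a constant). The subtlety is that $\uh_\alpha(x) = \inf_{\beta \in [\alpha,1)} h_\beta(x)$ is an infimum over an \emph{uncountable} index set, and pointwise infima of uncountably many l.s.a.\ functions need not be l.s.a.\ in general. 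So the first key step is to reduce this uncountable infimum to a countable one. I would argue that $\beta \mapsto h_\beta(x)$ can be handled along a fixed countable sequence $\beta_j \uparrow 1$ that is dense in $[\alpha,1)$: more carefully, since each $v_\beta$ solves the $\alpha$-DCOE and the underlying structures vary measurably/continuously in the discount factor, I expect $h_\beta(x)$ to be right-continuous (or at least to have a countable determining set) in $\beta$ for each fixed $x$, so that $\inf_{\beta \in [\alpha,1)} h_\beta(x) = \inf_{j} h_{\beta_j}(x)$ along a suitable countable set. Once reduced to a countable infimum, l.s.a.-ness is immediate because the infimum of a countable family of l.s.a.\ functions is l.s.a.\ (this is a standard property of analytic sets, since the corresponding level sets are countable unions of analytic sets).

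Granting that $\uh_\alpha$ is l.s.a., the l.s.a.-ness of $\uh$ follows at once from the excerpt's own remark that $\uh_\alpha \uparrow \uh$ as $\alpha \uparrow 1$: a pointwise (increasing) limit of l.s.a.\ functions is l.s.a.\ by \cite[Lem.~7.30(2)]{bs}, and here again I can take the limit along a countable sequence $\alpha_k \uparrow 1$ since the family is monotone in $\alpha$.

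For real-valuedness, I would use Assumption~\ref{cond-pc-1}. The lower bound is the easy direction: since $c \geq 0$ we have $v_\beta \geq 0$, hence $h_\beta = v_\beta - m_\beta \geq -m_\beta$; but $m_\beta \geq 0$ as well, and more to the point, $h_\beta(x) = v_\beta(x) - \inf_{x'} v_\beta(x') \geq 0$ for every $x$, so $\uh_\alpha \geq 0$ and $\uh \geq 0$ everywhere — in particular these functions never take the value $-\infty$. The upper bound is where condition (\underline{B}) enters: $\uh(x) = \liminf_{\alpha \to 1} h_\alpha(x) \leq \liminf_{\alpha \uparrow 1}(v_\alpha(x) - m_\alpha) < \infty$ by Assumption~\ref{cond-pc-1}(\underline{B}), so $\uh(x) < +\infty$ for every $x$, and hence $\uh$ is real-valued; since $0 \leq \uh_\alpha \leq \uh < \infty$, the functions $\uh_\alpha$ are real-valued too.

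The main obstacle I anticipate is the reduction of the uncountable infimum over $\beta \in [\alpha,1)$ to a countable one — this is the only place where genuine care is needed, since l.s.a.-ness is not preserved under arbitrary (uncountable) infima. If the desired right-continuity or separability of $\beta \mapsto h_\beta(x)$ is not available directly, an alternative route is to invoke a measurable-selection / projection argument: express the epigraph of $\uh_\alpha$ as the projection onto $\X \times \R$ of an analytic subset of $\X \times [\alpha,1) \times \R$ (using that $(x,\beta,a) \mapsto h_\beta(x)$ has an analytic epigraph and that $[\alpha,1)$ is itself a Borel, hence analytic, parameter space), and then use the fact that projections of analytic sets are analytic. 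Everything else is routine bookkeeping with the monotone structure and the two parts of Assumption~\ref{cond-pc-1}.
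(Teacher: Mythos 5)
Your overall skeleton is right and your real-valuedness argument is complete (indeed more detailed than the paper's, which simply says this is clear from Assumption~\ref{cond-pc-1}(\underline{B}); your observations that $h_\beta \geq 0$ and $0 \leq \uh_\alpha \leq \uh < \infty$ are exactly what is needed). You have also correctly isolated the crux --- that $\uh_\alpha(x) = \inf_{\beta \in [\alpha,1)} h_\beta(x)$ is an uncountable infimum, so l.s.a.-ness is not automatic --- and your ``alternative route'' (view the epigraph of $\uh_\alpha$ as a projection over the parameter $\beta$) is precisely the paper's route: it applies the partial-minimization theorem \cite[Prop.~7.47]{bs} to $h_\beta(x)$ regarded as an l.s.a.\ function of the pair $(\beta, x)$, and then gets $\uh$ as an increasing pointwise limit $\uh_{\alpha_n} \uparrow \uh$ along a countable sequence via \cite[Lem.~7.30(2)]{bs}, exactly as in your last step.

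The genuine gap is that you assert, but do not establish, the hypothesis that makes the projection argument work: that $(\beta, x) \mapsto h_\beta(x)$ is \emph{jointly} l.s.a.\ on $(0,1)\times\X$. This is where all the work in the paper's proof lies, and it splits into two nontrivial pieces. First, $(\beta,x) \mapsto v_\beta(x)$ must be jointly l.s.a.; this does not follow from $v_\beta$ being l.s.a.\ for each fixed $\beta$, and the paper invokes a separate lemma (\cite[Lem.~A.1]{Yu19}) proved via the deterministic control model. Second --- and this is the point your proposal does not touch at all --- one needs $\beta \mapsto m_\beta$ to be Borel measurable, so that $h_\beta(x) = v_\beta(x) + (-m_\beta)$ is a sum of an l.s.a.\ function and a Borel function and hence jointly l.s.a.\ by \cite[Lem.~7.30(4)]{bs}. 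Note that $m_\beta = \inf_{x} v_\beta(x)$ is itself an uncountable infimum; partial minimization only yields that $m_\beta$ is l.s.a.\ in $\beta$, which makes $-m_\beta$ \emph{upper} semi-analytic and is not enough to conclude that the difference is l.s.a. The paper's resolution is specific to (PC): since $c \geq 0$, each $v^\pi_\beta(x)$ is non-decreasing in $\beta$, hence so is $m_\beta$, and a monotone real function of $\beta$ is Borel. Your primary route (reducing the infimum over $\beta$ to a countable one via a hoped-for right-continuity of $\beta \mapsto h_\beta(x)$) is not substantiated and, as you yourself flag, would need a single countable determining set working uniformly in $x$; without the joint measurability input above, neither of your two routes closes.
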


\begin{proof}
Clearly, the functions $\uh$ and $\uh_\alpha$ are real-valued under Assumption~\ref{cond-pc-1}(\underline{B}). In order to prove that they are l.s.a., let us treat the relative value functions $h_\alpha$, $\alpha \in (0,1)$, as a function of $(\alpha, x)$. We have proved in \cite[Lem.~A.1, Appendix~A.4]{Yu19} that $v_\alpha(x)$ is an l.s.a.\ function of $(\alpha, x)$ on $(0, 1) \times \X$ (this proof uses the deterministic control model corresponding to the MDP mentioned in Remark~\ref{remark-dcoe} in Section~\ref{sec-2.3}).
Next, consider $m_\alpha$ as a function of $\alpha$.  Since the one-stage cost $c \geq 0$ in (PC), for each policy $\pi$ and initial state $x$, the $\alpha$-discounted value $v^\pi_\alpha(x)$ is non-decreasing as $\alpha$ increases. Therefore, $m_\alpha = \inf_{x \in \X} v_\alpha(x)$ is also monotonically non-decreasing as $\alpha$ increases. It follows that $m_\alpha$ is a Borel measurable function of $\alpha$ on $(0,1)$ and so is $- m_\alpha$. Then, by \cite[Lem.~7.30(4)]{bs}, $h_\alpha(x) = v_\alpha(x) - m_\alpha$ is l.s.a.\ in $(\alpha, x)$. 
Now, since for each $\alpha$, $\uh_\alpha$ is the partial minimization of $h_\beta(x)$ over $\beta \in [\alpha, 1)$,  $\uh_\alpha$ is an l.s.a.\ function of $x$ by \cite[Prop.\ 7.47]{bs}. Finally, since $\uh_\alpha \uparrow \uh$ as $\alpha \uparrow 1$, we can write $\uh$ as the pointwise limit of $\uh_{\alpha_n}$ for a sequence $\alpha_n \uparrow 1$, and therefore, $\uh$ is l.s.a.\ by \cite[Lem.~7.30(2)]{bs}.
\end{proof}

\subsubsection{Majorization Condition} 
We now introduce a majorization condition for (PC). Its first two parts are the same as those of Assumption~\ref{cond-uc-2} for (UC).

\begin{assumption} \label{cond-pc-2}
In the model (PC), for each $x \in \X$ and $\epsilon > 0$, 
\begin{itemize}[leftmargin=0.8cm,labelwidth=!]
\item[\rm (i)] Assumption~\ref{cond-uc-2}(i) holds;
\item[\rm (ii)] Assumption~\ref{cond-uc-2}(ii) holds;
\item[\rm (iii)] Assumption~\ref{cond-uc-2}(iii) holds with $\uh$ in place of the weight function $w$:
\footnote{The integrals of $\uh$ appearing here are well-defined since $\uh$ is u.m.\ by Lemma~\ref{lem-h-lsa-pc}.}
\begin{equation}
   \lim_{\ell \to \infty} \sup_{a \in K_\epsilon(x) }  \int_\X \uh(y) \, \ind\big[\uh(y) \geq \ell \, \big] \, q(dy \mid x, a) = 0. \label{cond-pc-2c}
\end{equation}   
\end{itemize}
\end{assumption}

Recall that in Assumption~\ref{cond-uc-2}(i), for each state $x$ and $\epsilon > 0$, we need to choose a subset $K_\epsilon(x) \subset A(x)$ that contains $\epsilon$-optimal actions of the minimization problems associated with the right-hand sides of the $\alpha$-DCOEs for all $\alpha$ sufficiently large. For the (PC) model, if $c(x, \cdot)$ is unbounded above but the family $\{h_\alpha\}$ is pointwise bounded (i.e., (\ref{pc-cond-B}) holds), a strict subset of $A(x)$ can be found as a natural candidate for $K_\epsilon(x)$, without the need for detailed knowledge about $c(\cdot)$ or $\{h_\alpha\}$. For an illustrative example, see the example and the proof of Proposition~\ref{prp-expc-2} in Section~\ref{sec-ex-pc}.

About Assumption~\ref{cond-pc-2}(iii), in practice, to verify it without knowing $\uh$, one could first find an upper bound $\hat h \geq \uh$ and then verify the condition (\ref{cond-pc-2c}) for $\hat h$ instead:
$$  \lim_{\ell \to \infty} \sup_{a \in K_\epsilon(x) }  \int_\X \hat h(y) \, \ind\big[\hat h(y) \geq \ell \, \big] \, q(dy \mid x, a) = 0.$$
An upper bound $\hat h$ can be available as a byproduct of verifying the pointwise boundedness condition on the family $\{h_\alpha\}$ (either Assumption~\ref{cond-pc-1}(\underline{B}) or the stronger condition (\ref{pc-cond-B})). An illustrative example will be given in Section~\ref{sec-ex-pc}.

\subsubsection{Optimality Result} 
The following theorem can be compared with the existing ACOI results that require continuity/compactness conditions \cite{FKZ12,Sch93,VAm15}.

\begin{theorem}[the ACOI for (PC)] \label{thm-pc-acoi}
For the (PC) model, suppose Assumptions~\ref{cond-pc-1} and~\ref{cond-pc-2} hold. Let $\rho^* = \limsup_{\alpha \uparrow 1} (1 - \alpha) \, m_\alpha$ and let the real-valued function $\uh \in \An(\X)$ be as given in (\ref{eq-pc-uh}). Then the optimal average cost function $g^*(\cdot) = \rho^*$, and the pair $(\rho^*, \uh)$ satisfies the ACOI:
\begin{equation} \label{eq-pc-acoi}
  \rho^* + \uh(x) \geq \inf_{a \in A(x)} \left\{ c(x, a) + \int_\X \uh(y) \, q(dy \mid x, a) \right\}, \qquad x \in \X.
\end{equation}  
Hence there exist a nonrandomized Markov optimal policy and for each $\epsilon > 0$, a nonrandomized stationary $\epsilon$-optimal policy. 
\end{theorem}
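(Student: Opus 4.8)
The plan is to follow the architecture of the proof of Theorem~\ref{thm-uc-acoi}, adapting each step to the nonnegative setting, where $h_\alpha = v_\alpha - m_\alpha \geq 0$ and the liminf is taken continuously over $\alpha \uparrow 1$. The backbone is the $\alpha$-DCOE: subtracting $m_\alpha$ from both sides of $v_\alpha = T_\alpha v_\alpha$ and using that $q(dy \mid x,a)$ is a probability measure, I obtain, for every $x$ and $\alpha \in (0,1)$,
\begin{equation*}
(1-\alpha)\, m_\alpha + h_\alpha(x) = \inf_{a \in A(x)}\Big\{ c(x,a) + \alpha \int_\X h_\alpha(y)\, q(dy\mid x,a)\Big\}.
\end{equation*}
Fixing $x$ and $\epsilon > 0$, I would invoke Assumption~\ref{cond-pc-2}(i) to restrict the infimum to $K_\epsilon(x)$ at the cost of an additive $\epsilon$, and then replace $h_\alpha$ by $\uh_\alpha \leq h_\alpha$ inside the integral (legitimate since $\uh_\alpha$ is l.s.a.\ and nonnegative by Lemma~\ref{lem-h-lsa-pc}), yielding
\begin{equation*}
(1-\alpha)\, m_\alpha + h_\alpha(x) \geq \inf_{a \in K_\epsilon(x)}\Big\{ c(x,a) + \alpha \int_\X \uh_\alpha(y)\, q(dy\mid x,a)\Big\} - \epsilon .
\end{equation*}

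The key technical step, the (PC) analogue of Lemma~\ref{lem-uc-2acoi}, is to show
\begin{equation*}
\lim_{\alpha \uparrow 1} \inf_{a \in K_\epsilon(x)}\Big\{ c(x,a) + \alpha \int_\X \uh_\alpha(y)\, q(dy\mid x,a)\Big\} = \inf_{a \in K_\epsilon(x)}\Big\{ c(x,a) + \int_\X \uh(y)\, q(dy\mid x,a)\Big\}.
\end{equation*}
Here I would reuse the Egoroff argument: since $\uh_\alpha \uparrow \uh$ pointwise with all functions real-valued and u.m., Egoroff's theorem applied to the finite majorizing measure $\nu$ of Assumption~\ref{cond-pc-2}(ii) produces, for any $\delta$, a set $D_\delta$ with $\nu(\X\setminus D_\delta)<\delta$ on which the convergence is uniform. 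On $D_\delta$ uniform convergence handles the integral; on $\X\setminus D_\delta$ I would bound $\uh - \uh_\alpha$ by $\uh$ (using $0 \le \uh_\alpha \le \uh$) and control $\int_{\X\setminus D_\delta}\uh\, q(dy\mid x,a)$ through the majorization $\sup_{a\in K_\epsilon(x)} q(\X\setminus D_\delta\mid x,a) \le \nu(\X\setminus D_\delta) < \delta$ together with the uniform integrability of $\uh$ in Assumption~\ref{cond-pc-2}(iii) via the characterization \cite[Thm.~10.3.5]{Dud02}; note this same uniform integrability delivers the bound $\sup_{a\in K_\epsilon(x)}\int_\X \uh\, q < \infty$ needed to annihilate the $(1-\alpha)\int\uh\,q$ residual. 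This is essentially the only place where the argument differs from the (UC) case: there the residual was controlled through the weight function $w$, whereas here $\uh$ itself plays that role.

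With the lemma in hand, I would pass to $\liminf_{\alpha \uparrow 1}$ in the displayed inequality. Its right-hand side converges, so $\liminf$ of the left-hand side is at least $\inf_{a\in K_\epsilon(x)}\{c + \int \uh\, q\} - \epsilon$. For an upper bound on the left-hand side I would use the elementary inequality $\liminf_\alpha(a_\alpha + b_\alpha) \le \limsup_\alpha a_\alpha + \liminf_\alpha b_\alpha$ with $a_\alpha = (1-\alpha)m_\alpha$ and $b_\alpha = h_\alpha(x)$, which gives $\liminf_\alpha[(1-\alpha)m_\alpha + h_\alpha(x)] \le \rho^* + \uh(x)$ by the definitions of $\rho^*$ and $\uh$ (all quantities finite by Assumption~\ref{cond-pc-1}). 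Chaining the two bounds yields $\rho^* + \uh(x) \ge \inf_{a\in K_\epsilon(x)}\{c + \int\uh\,q\} - \epsilon \ge \inf_{a\in A(x)}\{c+\int\uh\,q\} - \epsilon$, and letting $\epsilon\downarrow 0$ gives the ACOI~(\ref{eq-pc-acoi}).

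Finally, for $g^*(\cdot)=\rho^*$, the lower bound is immediate: Assumption~\ref{cond-pc-1}(G) and \cite[Lem.~1.2(b)]{Sch93} give $\rho^* = \limsup_{\alpha\uparrow1}(1-\alpha)m_\alpha \le \inf_{x}g^*(x) \le g^*(x)$ for every $x$. This replaces the reverse-inequality (the $\bh$) argument of the (UC) proof, which is unavailable here because $\bh=\limsup_\alpha h_\alpha$ may be $+\infty$. For the upper bound I would use the ACOI together with $\uh\ge 0$ in the standard way: select (by the l.s.a.\ measurable selection theorem) an $\epsilon'$-minimizing nonrandomized stationary $\mu$, telescope $c(x_k,a_k) \le \rho^* + \uh(x_k) - \int\uh\,q(\cdot\mid x_k,a_k) + \epsilon'$ along the induced chain, and drop the nonnegative terminal term $\E^\mu_x[\uh(x_n)]\ge 0$ to get $J(\mu,x)\le \rho^* + \epsilon'$, hence $g^*(x)\le\rho^*$. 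Constancy and finiteness of $g^*$ then let Lemma~\ref{lem-optpol} supply the nonrandomized Markov optimal policy and the stationary $\epsilon$-optimal policies. I expect the Egoroff/uniform-integrability step and the correct bookkeeping of $\liminf$ versus $\limsup$ on the left-hand side to be the two points requiring the most care.
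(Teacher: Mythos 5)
Your proposal is correct and follows essentially the same route as the paper: the same $\alpha$-DCOE manipulation, the same restriction to $K_\epsilon(x)$ via Assumption~\ref{cond-pc-2}(i), the same Egoroff/uniform-integrability lemma with $\uh$ playing the role of the weight function, and the same appeal to Sch\"al's results for $\rho^* \le g^*(\cdot)$ in place of the $\bh$ argument of the (UC) case. The only cosmetic difference is that you handle the left-hand side with the inequality $\liminf_\alpha(a_\alpha+b_\alpha)\le\limsup_\alpha a_\alpha+\liminf_\alpha b_\alpha$ over continuous $\alpha\uparrow 1$, whereas the paper fixes, for each state $x$, a sequence $\alpha_n\uparrow 1$ along which $h_{\alpha_n}(x)\to\uh(x)$; the two devices are interchangeable here.
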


\begin{rem}
The countable-state finite-action MDP in Cavazos-Cadena's counterexample for ACOE \cite{CCa91} satisfies both assumptions of Theorem~\ref{thm-pc-acoi}. In particular, regarding the majorization condition, in the finite-action case, Assumption~\ref{cond-pc-2}(i)-(ii) hold trivially for $K_\epsilon(x) = A(x)$; in that example $\uh = 0$, so Assumption~\ref{cond-pc-2}(iii) also holds. This shows that without further conditions, we cannot improve the ACOI result in Theorem~\ref{thm-pc-acoi} to the ACOE. \myqed
\end{rem} 

We now prove Theorem~\ref{thm-pc-acoi}. The proof is similar to that of the ACOI in the (UC) case, but there are some subtle differences, one of which being that in this case, for different states, we choose possibly different sequences $\alpha_n \uparrow 1$ and work with the corresponding relative value functions $h_{\alpha_n}$. We start with a lemma similar to Lemma~\ref{lem-uc-2acoi} for (UC); it follows from Egoroff's theorem and our majorization condition. 

\begin{lemma} \label{lem-pc-2acoi}
{\rm (PC)} Let Assumptions~\ref{cond-pc-1},~\ref{cond-pc-2}(ii)-(iii) hold, and
let $K_\epsilon(x) \subset A(x)$ be the set in the latter condition for a given $x \in \X$ and $\epsilon > 0$. Then for any sequence $\alpha_n \uparrow 1$, with $\uh_n = \uh_{\alpha_n}$, we have
\begin{align*}
& \lim_{n \to \infty} \inf_{a \in K_\epsilon(x)} \left\{ c(x, a) + \alpha_n \int_\X \uh_n(y) \, q(dy \mid x, a) \right\} 
 = \inf_{a \in K_\epsilon(x)} \left\{ c(x, a) +  \int_\X \uh(y) \, q(dy \mid x, a) \right\}.
\end{align*}
\end{lemma}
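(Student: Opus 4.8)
The lemma is the (PC) analogue of Lemma~\ref{lem-uc-2acoi}, so the plan is to mirror that proof almost verbatim, substituting the monotone increasing sequence $\uh_n = \uh_{\alpha_n} \uparrow \uh$ of real-valued l.s.a.\ functions (Lemma~\ref{lem-h-lsa-pc}) for the sequence used in the (UC) case, and substituting the uniform-integrability hypothesis on $\uh$ (Assumption~\ref{cond-pc-2}(iii)) for the one on the weight function $w$. First I would fix the state $x$, the tolerance $\epsilon$, the set $K_\epsilon(x)$, and the majorizing finite measure $\nu$ from Assumption~\ref{cond-pc-2}(ii). Since $\uh_n \uparrow \uh$ pointwise and all these functions are real-valued and u.m., Egoroff's theorem gives, for each $\delta > 0$, a u.m.\ set $D_\delta$ with $\nu(\X \setminus D_\delta) < \delta$ on which the convergence $\uh_n \to \uh$ is uniform. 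Hence for any $\eta > 0$, for all $n$ large,
\[
 \int_{D_\delta} \big(\uh(y) - \uh_n(y)\big)\, q(dy \mid x, a) \leq \eta \qquad \forall\, a \in A(x).
\]

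\textbf{Controlling the complement.} The main point of departure from the (UC) proof is how the leftover integral over $\X \setminus D_\delta$ is bounded. There, one split off a factor $w$ using $\|\uh - \uh_n\|_w \leq \ell$; here, instead, I would bound the integrand $\uh - \uh_n$ directly by $\uh$ itself (using $0 \leq \uh_n \leq \uh$, which holds since $v_\alpha \geq m_\alpha$ gives $h_\alpha \geq 0$ and hence $\uh_\alpha, \uh \geq 0$). This gives, for all $a \in K_\epsilon(x)$,
\[
 \int_{\X \setminus D_\delta} \big(\uh(y) - \uh_n(y)\big)\, q(dy \mid x, a) \leq \int_{\X \setminus D_\delta} \uh(y)\, q(dy \mid x, a).
\]
Now the majorization bound $\sup_{a \in K_\epsilon(x)} q(\X \setminus D_\delta \mid x, a) \leq \nu(\X \setminus D_\delta) < \delta$, combined with the uniform integrability of $\uh$ (Assumption~\ref{cond-pc-2}(iii)) via the characterization in \cite[Thm.~10.3.5]{Dud02}, forces the right-hand side below $\eta$ uniformly in $a \in K_\epsilon(x)$ once $\delta$ is small enough. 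Thus, exactly as in the (UC) case, I obtain $\sup_{a \in K_\epsilon(x)} \int_\X (\uh - \uh_n)\, q(dy\mid x,a) \leq 2\eta$ for all $n$ large.

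\textbf{Passing to the limit.} From this uniform smallness and $\uh_n \leq \uh$, the two infima over $K_\epsilon(x)$ of $c(x,a) + \alpha_n \int \uh_n\, q$ and of $c(x,a) + \alpha_n \int \uh\, q$ differ by at most $2\eta$; letting $n \to \infty$ and then $\eta \downarrow 0$ collapses the $\uh_n$ to $\uh$. The discount factor is removed by noting $(1 - \alpha_n)\sup_{a \in K_\epsilon(x)} \int_\X \uh\, q(dy\mid x,a) \to 0$, for which I need $\sup_{a \in K_\epsilon(x)} \int_\X \uh\, q(dy\mid x,a) < \infty$. I expect this finiteness to be the one genuinely new wrinkle relative to the (UC) proof: in (UC) it came free from $\|\uh\|_w < \infty$ and the Lyapunov bound, whereas here it must be extracted from Assumption~\ref{cond-pc-2}(iii) (uniform integrability of $\uh$ forces, in particular, a uniform bound on $\int_{\{\uh < \ell\}} \uh\, q + \int_{\{\uh \geq \ell\}} \uh\, q$, the first term being at most $\ell \cdot q(\X\mid x,a) = \ell$ and the second small). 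The rest is routine mimicry of Lemma~\ref{lem-uc-2acoi}.
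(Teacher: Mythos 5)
Your proposal is correct and follows the paper's own proof essentially verbatim: the paper likewise runs the Egoroff argument of Lemma~\ref{lem-uc-2acoi} and makes exactly the two changes you identify, namely bounding $\uh - \uh_n \leq \uh$ on $\X \setminus D_\delta$ and invoking the uniform integrability of $\uh$ from Assumption~\ref{cond-pc-2}(iii) both there and to get $\sup_{a \in K_\epsilon(x)} \int_\X \uh \, q(dy \mid x,a) < \infty$ at the end. No gaps.
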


\begin{proof}
Note first that under Assumption~\ref{cond-pc-1}, $\uh_n \uparrow \uh < \infty$ as $n \to \infty$, and the integrals in the lemma are well-defined since $\uh_n, \uh$ are u.m.\ by Lemma~\ref{lem-h-lsa-pc}. We follow the proof for Lemma~\ref{lem-uc-2acoi} except for two small changes. The first change is that here, when dealing with the complement set $\X \setminus D_\delta$ after applying Egoroff's theorem, in order to obtain the bound (\ref{eq-prf-thm-uc1c}):
$$\sup_{a \in K_\epsilon(x)} \int_{\X \setminus D_\delta} \big(\uh(y) - \uh_n(y) \big)\, q(dy \mid x, a) \leq \eta \qquad \text{for all $n$ sufficiently large},$$ 
for a given $\eta > 0$ and sufficiently small $\delta > 0$, we use the fact that $\uh - \uh_n \leq \uh$ and we also use the uniform integrability condition in Assumption~\ref{cond-pc-2}(iii) for the nonnegative function $\uh$, instead of the weight function $w$. The second change is that near the end of the proof, to show that 
$$\textstyle{(1 - \alpha_n) \sup_{a \in K_\epsilon(x)}  \int_\X \uh(y) \, q(dy \mid x, a) \to 0} \ \ \ \text{as} \  n \to \infty,$$ 
we now use the fact that $\sup_{a \in K_\epsilon(x)}  \int_\X \uh(y) \, q(dy \mid x, a) < \infty$, which is again implied by the uniform integrability condition on $\uh$ in Assumption~\ref{cond-pc-2}(iii).
\end{proof}

\begin{proof}[Proof of Theorem~\ref{thm-pc-acoi}]
Consider an arbitrary $x \in \X$ and fix it in the proof below. Choose a sequence $\{\alpha_n\}$ such that 
$$\lim_{n \to \infty} h_{\alpha_n}(x) = \liminf_{\alpha \to 1} h_\alpha(x) = \uh(x).$$ 
Note that $\rho^* \geq \limsup_{n \to \infty} (1 - \alpha_n) \, m_{\alpha_n}$. Recall also that $\rho^* \leq \inf_{y \in \X} g^*(y)$ by~\cite[Lem.~1.2(b)]{Sch93}.
Define $h_n := h_{\alpha_n}$, $\uh_n := \uh_{\alpha_n}$. Then $\uh_n \leq h_n$ and $\uh_n \uparrow \uh$ as $n \to \infty$.

Let $\epsilon > 0$. For the fixed state $x$, by subtracting $\alpha m_\alpha$ from both sides of the $\alpha$-DCOE (Theorem~\ref{thm-dcoe}), we have that for all $n \geq 0$,
\begin{align}
   ( 1 - \alpha_n) \, m_{\alpha_n} + h_n(x) \, & = \, \inf_{a \in A(x)} \left\{ c(x, a) + \alpha_n \int_\X h_n(y) \, q(dy \mid x, a) \right\} \label{eq-prf-thm-pc-oe}.
\end{align}
Then similarly to the derivation of (\ref{eq-prf-thm-uc0a}) in the proof for Theorem~\ref{thm-uc-acoi}, we apply Assumption~\ref{cond-pc-2}(i) and replace $h_n$ by $\uh_n$ in the right-hand side above before letting $n \to \infty$ in both sides of (\ref{eq-prf-thm-pc-oe}). This results in the inequality
\begin{align}  \label{eq-prf-thm-pc-oe2}
    \rho^* + \uh(x)     
    & \, \geq \,   \liminf_{n \to \infty} \inf_{a \in K_\epsilon(x)} \left\{ c(x, a) + \alpha_n \int_\X \uh_n(y) \, q(dy \mid x, a) \right\} - \epsilon
\end{align}   
where $K_\epsilon(x)$ is the subset of actions in Assumption~\ref{cond-pc-2}(i) for the fixed state $x$ and $\epsilon > 0$. 
Next, by (\ref{eq-prf-thm-pc-oe2}) and Lemma~\ref{lem-pc-2acoi}, we have 
\begin{align*}
  \rho^* + \uh(x) + \epsilon & \, \geq \inf_{a \in K_\epsilon(x)} \left\{ c(x, a) + \int_\X \uh(y) \, q(dy \mid x, a) \right\} \\
  & \, \geq \, \inf_{a \in A(x)} \left\{ c(x, a) + \int_\X \uh(y) \, q(dy \mid x, a) \right\}.
\end{align*}  
Since this holds for an arbitrary $x \in \X$ and an arbitrary $\epsilon > 0$, we obtain the desired ACOI.
Since $\rho^* \leq g^*(\cdot)$~\cite[Lem.~1.2(b)]{Sch93}, the ACOI just established implies that we must have $g^*(\cdot) = \rho^*$ \cite[Prop.\ 1.3]{Sch93}. 
Finally, the existence of a Markov average-cost optimal policy and stationary $\epsilon$-optimal policy follows from Lemma~\ref{lem-h-lsa-pc}, the ACOI proved above, and Lemma~\ref{lem-optpol}. This completes the proof.
\end{proof}

\section{Further Discussion and Illustrative Examples} \label{sec-5}
In this section, we first explain the origin of the first part of our majorization condition and discuss an important implication of the second part of that condition (cf.\ Sections~\ref{sec-epilim}-\ref{sec-maj-wsc}). We will then discuss the boundedness condition (Assumption~\ref{cond-uc-1}) on the family of relative value functions in the (UC) case, a related $w$-geometric ergodicity condition, as well as stronger conditions for ensuring the ACOE (cf.\ Sections~\ref{sec-bd-uc}-\ref{sec-acoe-uc}). Finally, we will illustrate the application of our ACOI result for the (PC) model using an inventory control example (cf.\ Section~\ref{sec-ex-pc}); a similar example for (UC) is given in Appendix~\ref{appsec-ex-uc}.

\subsection{About Assumption~\ref{cond-uc-2}(i), Epi-limits of Functions, and Their Relation with ACOIs} \label{sec-epilim}

Assumption~\ref{cond-uc-2}(i) is the first part of our majorization condition for both (UC) and (PC) models. In introducing this assumption, we have been influenced by the theory on epi-convergence of functions~\cite[Chap.~7]{RoW98} and more specifically, by the relation between the epi-limits and pointwise limits of sequences of functions and certain inequalities for the infima of those functions and their limits.

If $\{f_n\}$ is a sequence of extended-real-valued functions on a metric space $Y$, the \emph{outer limit} of their epigraphs,
\footnote{The outer limit of a sequence $\{E_n\}$ of sets in a metric space is defined as $\limsup_{n \to \infty} E_n : = \cap_{m \geq 1} \cl \big( \cup_{n \geq m} E_n \big)$, where $\cl$ denotes the closure of a set. In our case, $E_n$ is the epigraph of $f_n$, $\epi(f_n) := \big\{ (y, r) \mid f_n(y) \leq r, \, y \in Y, \,  r \in \R\big\}$, and the set $\limsup_{n \to \infty} \epi(f_n)$ consists of all the points $(y, r) \in Y \times \R$ such that $r \geq \lim_{k \to \infty} f_{n_k}(y_k)$ for some subsequence $\{f_{n_k}\}$ of $\{f_n\}$ and some sequence of points $y_k \to y$.}
$\limsup_{n \to \infty} \epi(f_n)$, 
corresponds to the epigraph of some (extended-real-valued) function on $Y$. This function, denoted by $\elims_n f_n$, is called the \emph{lower epi-limit} of $\{f_n\}$. By definition, $\elims_n f_n$ is always l.s.c.\ and lies below the pointwise limit $\liminf_{n \to \infty}  f_n$, and moreover, 
\begin{equation} \label{ineq-epi1}
   \liminf_{n \to \infty} \inf_{y \in Y} f_n(y) \leq \inf_{y \in Y} \, \big({\textstyle{  \elims_n}} \,f_n \big)(y) \leq  \inf_{y \in Y} \liminf_{n \to \infty}  f_n(y).
\end{equation}   
Thus, if we want the equality $\liminf_{n \to \infty} \inf_{y \in Y} f_n(y) = \inf_{y \in Y} \liminf_{n \to \infty}  f_n(y)$, we will need $\liminf_{n \to \infty} \inf_{y \in Y} f_n(y) = \inf_{y \in Y} \big({\textstyle{\elims_n}} \,f_n \big)(y)$ to hold at least.

The proposition below gives a necessary, sometimes sufficient, condition for the latter equality. It is similar to \cite[Thm.~7.31(a)]{RoW98}; because \cite{RoW98} deals with finite-dimensional spaces only, for clarity, we include a proof of this result in Appendix~\ref{appsec-prf}.

\begin{prop} \label{prp-inf-epilim}
Let $\{f_n\}$ be a sequence of extended-real-valued functions on a metric space $Y$ such that $\liminf_{n \to \infty} \inf_{y \in Y} f_n(y)$ is finite. If 
\begin{equation} \label{epi-cond}
  \liminf_{n \to \infty} \inf_{y \in Y} f_n(y) = \inf_{y \in Y} \!\big({\textstyle{\elims_n}} \, f_n \big)(y),
\end{equation}  
then the following condition holds: for every $\epsilon > 0$, there exist a compact set $K \subset Y$ and a subsequence $\{f_{n_j}\}$ such that
\begin{equation} \label{epi-cond2}
   \inf_{y \in K} f_{n_j}(y) \leq \inf_{y \in Y} f_{n_j}(y) + \epsilon \qquad \text{for all $j$ sufficiently large}.
\end{equation}   
If $\lim_{n \to \infty} \inf_{y \in Y} f_n(y)$ exists and is finite, this condition implies (\ref{epi-cond}) .
\end{prop}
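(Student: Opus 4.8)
The plan is to prove the two implications separately, working throughout with the characterization of the lower epi-limit recorded in the footnote: writing $g := \elims_n f_n$, a point $(y,r)$ lies in $\epi(g) = \limsup_{n}\epi(f_n)$ exactly when $r \geq \lim_{k} f_{n_k}(y_k)$ for some subsequence $\{n_k\}$ and some $y_k \to y$; equivalently, $g(y)$ is the least value $\liminf_n f_n(y_n)$ attainable along sequences $y_n \to y$. I abbreviate $\alpha := \liminf_n \inf_{y \in Y} f_n(y)$ (finite by hypothesis) and recall from (\ref{ineq-epi1}) that $\alpha \le \inf_y g(y)$ always holds, so in each direction only the reverse comparison or its witnessing sequence is at issue.

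For the necessity direction (\ref{epi-cond})$\Rightarrow$(\ref{epi-cond2}), fix $\epsilon > 0$. Since $\inf_y g(y) = \alpha$, I would pick $\bar y$ with $g(\bar y) < \alpha + \epsilon/2$, and then by the epi-limit characterization a sequence $y_n \to \bar y$ with $\liminf_n f_n(y_n) = g(\bar y) < \alpha + \epsilon/2$. Passing to a subsequence $\{n_j\}$ along which $f_{n_j}(y_{n_j})$ converges to this $\liminf$ yields $f_{n_j}(y_{n_j}) < \alpha + \epsilon/2$ for $j$ large, while the very definition of $\alpha$ as a $\liminf$ gives $\inf_y f_{n_j}(y) > \alpha - \epsilon/2$ for $j$ large. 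The set $K := \{\bar y\} \cup \{y_{n_j} : j \ge 1\}$ is compact, being a convergent sequence together with its limit, and chaining the two estimates gives $\inf_{y\in K} f_{n_j}(y) \le f_{n_j}(y_{n_j}) < \alpha + \epsilon/2 < \inf_y f_{n_j}(y) + \epsilon$, which is exactly (\ref{epi-cond2}).

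For the sufficiency direction, I assume $\lim_n \inf_y f_n(y) = \alpha$ exists and that (\ref{epi-cond2}) holds, and it suffices to produce, for each $\epsilon > 0$, a point $\bar z$ with $g(\bar z) \le \alpha + 2\epsilon$. I would apply (\ref{epi-cond2}) to obtain a compact $K$ and a subsequence $\{n_j\}$ with $\inf_{y\in K} f_{n_j}(y) \le \inf_y f_{n_j}(y) + \epsilon$ for large $j$; choosing $z_j \in K$ nearly attaining $\inf_{y \in K} f_{n_j}(y)$ and invoking $\inf_y f_{n_j}(y) \to \alpha$ gives $f_{n_j}(z_j) < \alpha + 2\epsilon$ eventually. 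Sequential compactness of $K$ then extracts $z_{j_k} \to \bar z \in K$, and the footnote characterization applied to the subsequence $\{n_{j_k}\}$ and the points $z_{j_k} \to \bar z$ (passing to a further subsequence along which $f_{n_{j_k}}(z_{j_k})$ converges to a limit $\le \alpha + 2\epsilon$) shows $(\bar z, \alpha + 2\epsilon) \in \epi(g)$, i.e. $g(\bar z) \le \alpha + 2\epsilon$. Letting $\epsilon \downarrow 0$ gives $\inf_y g(y) \le \alpha$, which together with (\ref{ineq-epi1}) yields (\ref{epi-cond}).

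The step I expect to be the crux is precisely the use of the \emph{limit} rather than the $\liminf$ in the sufficiency direction: the subsequence $\{n_j\}$ is handed to us by (\ref{epi-cond2}) and is not under our control, so I must know that \emph{every} subsequence of $\{\inf_y f_n(y)\}$ converges to $\alpha$ in order to conclude $f_{n_j}(z_j) < \alpha + 2\epsilon$; under $\liminf$ alone the selected subsequence could carry $\inf_y f_{n_j}(y)$ up to some $\beta > \alpha$. A secondary technical point, since $Y$ is a general metric space rather than $\R^n$ as in \cite[Thm.~7.31(a)]{RoW98}, is that compactness must be accessed concretely---through the convergent-sequence-plus-limit construction in the necessity direction and through sequential compactness of the given $K$ in the sufficiency direction---rather than through level-boundedness of a fixed ambient set.
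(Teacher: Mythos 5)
Your proof is correct and takes essentially the same route as the paper's: the necessity direction builds the compact set as a convergent sequence of near-minimizers together with its limit point, and the sufficiency direction extracts a convergent subsequence from $K$ and feeds it into the epigraph characterization of $\elims_n f_n$, with only trivial differences in the $\epsilon$ bookkeeping. You also correctly isolate why the full limit (rather than the $\liminf$) of $\inf_{y\in Y} f_n(y)$ is needed in the converse, which is the same point the paper's hypothesis is designed to address.
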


Let us discuss now what these results imply for the validity of the ACOI in MDPs. Consider some fixed state $\hat x \in \X$. Let Assumption~\ref{cond-uc-1} or~\ref{cond-pc-1} hold for (UC) or (PC), respectively. By redefining, if necessary, the sequence of discount factors $\alpha_n$ in the proofs of the ACOI in Section~\ref{sec-3}, we can suppose that the corresponding relative value functions $h_n := h_{\alpha_n}$ are such that $\lim_{n \to \infty} h_n(\hat x) = \uh(\hat x)$ and also that $\lim_{n \to \infty} (1- \alpha_n) \, m_{\alpha_n}$ exists if the model is (PC). Define functions $f_n$ on $A(\hat x)$ by
$$ f_n(a) = c(\hat x,a) + \alpha_n \int_\X h_n(y) \, q(dy \mid \hat x, a), \qquad a \in A(\hat x).$$ 
Then the following limit exists and is finite:
$$\lim_{n \to \infty} \inf_{a \in A(\hat x)}  f_n(a) = \rho(\hat x) + \uh(\hat x),$$
where $\rho(\hat x) = \rho^*$ for (UC) and $\rho(\hat x) = \lim_{n \to \infty} (1- \alpha_n) \, m_{\alpha_n} \leq \rho^*$ for (PC).
Applying Proposition~\ref{prp-inf-epilim} and (\ref{ineq-epi1}) to this sequence $\{f_n\}$ then gives the next corollary:

\begin{cor} \label{cor-epilim-acoi}
Let Assumption~\ref{cond-uc-1} (resp.~\ref{cond-pc-1}) hold for the (UC) (resp.\ (PC)) model, and consider the preceding $\{\alpha_n\}$, $\{h_n\}$, $\hat x$ and $\rho(\hat x)$. Suppose that for some $\epsilon > 0$, there does not exist a compact subset $K \subset A(\hat x)$ with the property that 
\begin{equation} \label{eq-diss-cond-K}
 \inf_{a \in K} \left\{ c(\hat x,a) + \alpha_{n_j} \int_\X v_{\alpha_{n_j}}(y) \, q(dy \mid \hat x, a) \right\} \leq v_{\alpha_{n_j}}(\hat x) + \epsilon
\end{equation}
for some subsequence $\{\alpha_{n_j}\}$ and all $j$ sufficiently large. Then 
\begin{equation} \label{eq-diss-acoi}
   \rho(\hat x) + \uh(\hat x) < \inf_{a \in A(\hat x)}  \left\{ c(\hat x,a) + \liminf_{n \to \infty} \int_\X h_n(y) \,  \, q(dy \mid \hat x, a) \right\}.
\end{equation}
\end{cor}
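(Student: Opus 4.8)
The plan is to prove the contrapositive. Corollary~\ref{cor-epilim-acoi} asserts that the \emph{absence} of a compact $K$ satisfying (\ref{eq-diss-cond-K}) forces the strict inequality (\ref{eq-diss-acoi}). Equivalently, I would show that if (\ref{eq-diss-acoi}) \emph{fails}, i.e.\ if
\[
  \rho(\hat x) + \uh(\hat x) \geq \inf_{a \in A(\hat x)}  \left\{ c(\hat x,a) + \liminf_{n \to \infty} \int_\X h_n(y) \, q(dy \mid \hat x, a) \right\},
\]
then for every $\epsilon > 0$ such a compact $K$ \emph{must} exist. So the first step is to set up the sequence $\{f_n\}$ on $Y = A(\hat x)$ exactly as in the excerpt, namely $f_n(a) = c(\hat x,a) + \alpha_n \int_\X h_n(y)\, q(dy \mid \hat x, a)$, and record the two facts already established before the corollary: that $\lim_{n\to\infty}\inf_{a \in A(\hat x)} f_n(a)$ exists and equals $\rho(\hat x) + \uh(\hat x)$ (finite), and that the family $\{f_n\}$ is therefore eligible for Proposition~\ref{prp-inf-epilim}.

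The second step is to identify the quantity on the right of the failed-(\ref{eq-diss-acoi}) inequality with $\inf_{a}(\elims_n f_n)(a)$. Here I would use the lower bound in (\ref{ineq-epi1}), which always gives $\lim_n \inf_a f_n(a) \leq \inf_a (\elims_n f_n)(a)$, together with the observation that $c(\hat x,a) + \liminf_n \int_\X h_n(y)\, q(dy\mid \hat x,a)$ lies below the pointwise $\liminf_n f_n(a)$ (the factor $\alpha_n \uparrow 1$ is harmless since $\int_\X h_n\, q \to \int_\X \uh\, q$-type limits are controlled, and $\liminf$ of the product equals $\liminf$ of the integral). Thus the assumed failure of (\ref{eq-diss-acoi}) reads $\lim_n \inf_a f_n(a) = \rho(\hat x)+\uh(\hat x) \geq \inf_a\{\,\cdots\,\} \geq \inf_a(\elims_n f_n)(a)$, which when combined with the always-true reverse inequality pins down the epi-limit equality (\ref{epi-cond}): $\lim_n \inf_a f_n(a) = \inf_a(\elims_n f_n)(a)$.

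The third step is the direct application of Proposition~\ref{prp-inf-epilim}. Having just verified its hypothesis (\ref{epi-cond}) and knowing the limit $\lim_n \inf_a f_n(a)$ exists and is finite, the proposition delivers, for every $\epsilon > 0$, a compact set $K \subset A(\hat x)$ and a subsequence $\{f_{n_j}\}$ with $\inf_{a \in K} f_{n_j}(a) \leq \inf_{a \in A(\hat x)} f_{n_j}(a) + \epsilon$ for all large $j$. The final step is a bookkeeping translation: recalling $v_{\alpha}(\hat x) = h_\alpha(\hat x) + m_\alpha$ (for (PC)) or $v_\alpha(\hat x) = h_\alpha(\hat x) + v_\alpha(\bar x)$ (for (UC)) and that $\int_\X h_n\, q = \int_\X v_{\alpha_n}\, q - (\text{constant})$, I would rewrite $\inf_{a\in K} f_{n_j}(a) \leq \inf_a f_{n_j}(a)+\epsilon$ into the form (\ref{eq-diss-cond-K}) centered at $v_{\alpha_{n_j}}(\hat x)$, absorbing the additive constants and using $\inf_{a\in A(\hat x)} f_{n_j}(a) \leq f_{n_j}(a^*)$ evaluated along near-optimizers of the $\alpha_{n_j}$-DCOE. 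This produces exactly a compact $K$ of the kind whose existence the corollary claims is denied — contradicting the hypothesis — and completes the contrapositive.

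I expect the main obstacle to be the second step: cleanly matching the right-hand side of (\ref{eq-diss-acoi}), which involves $\liminf_n \int_\X h_n\, q(dy\mid\hat x,a)$ \emph{inside} the infimum over $a$, with $\inf_a(\elims_n f_n)(a)$. The subtlety is that $\elims_n f_n$ is the lower epi-limit of the \emph{whole} functions $f_n$, not the $a$-pointwise $\liminf$ of the integral term, and the two differ in general; I must argue that the chain of inequalities in (\ref{ineq-epi1}) is tight enough under the assumed failure of (\ref{eq-diss-acoi}) to squeeze (\ref{epi-cond}) into place. The discount factors $\alpha_n$ and the additive normalizing constants $(1-\alpha_n)m_{\alpha_n}$ or $(1-\alpha_n)v_{\alpha_n}(\bar x)$ must be tracked carefully so that the passage between the $f_n$-formulation and the $v_{\alpha_{n_j}}$-formulation of (\ref{eq-diss-cond-K}) introduces only vanishing or $\epsilon$-absorbable errors.
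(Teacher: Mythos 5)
Your proposal is correct and is essentially the paper's own argument run in contrapositive form: the paper deduces from the nonexistence of a compact $K$ that the equality (\ref{epi-cond}) of Proposition~\ref{prp-inf-epilim} must fail, and then reads off the strict inequality (\ref{eq-diss-acoi}) from the chain (\ref{ineq-epi1}), which is exactly the reverse of your squeeze. The only points to tidy are that the phrase ``lies below the pointwise $\liminf_n f_n(a)$'' should be ``equals'' (which you in effect use, and which holds since $\alpha_n\uparrow 1$ and the integrals are bounded for (UC) and nonnegative for (PC)), and that the translation to (\ref{eq-diss-cond-K}) needs no near-optimizers: the $\alpha$-DCOE gives $\inf_{a\in A(\hat x)}f_{n_j}(a)=v_{\alpha_{n_j}}(\hat x)-\alpha_{n_j}C_{n_j}$ exactly, so the normalizing constants cancel with no loss of $\epsilon$.
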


While the strict inequality (\ref{eq-diss-acoi}) does not contradict the ACOI in general, it rules out the ACOI in the following situations, which could happen in a given problem: $h_n \uparrow \uh$ and, if the model is (PC), $\rho(\hat x) = \rho^*$ in addition.
In these cases, on the right-hand side of (\ref{eq-diss-acoi}), we have $\liminf_{n \to \infty} \int_\X h_n(y) \,  q(dy \mid \hat x, a) = \int_\X \uh(y) \, q(dy \mid \hat x, a)$ for each $a$ by the monotone convergence theorem, so (\ref{eq-diss-acoi}) becomes
$$ \rho^* + \uh(\hat x) < \inf_{a \in A(\hat x)}  \left\{ c(\hat x,a) + \int_\X \uh(y) \, q(dy \mid \hat x, a) \right\},$$
violating the ACOI at the state $\hat x$.
It is this observation that led us to introduce a condition like the inequality (\ref{eq-diss-cond-K}) as part of our majorization condition for the ACOI. The inequality (\ref{eq-diss-cond-K}) itself is unwieldy to verify, since, among others, it depends on the particular choice of the sequence $\{\alpha_n\}$. So we required a similar inequality to hold for all sufficiently large $\alpha$ instead, for each state $x$, and this gave rise to Assumption~\ref{cond-uc-2}(i). (In the latter, the subset of actions is not required to be compact, for that is not needed in our proof of the ACOI.)

Finally, note that Proposition~\ref{prp-inf-epilim} only deals with the first relation in (\ref{ineq-epi1}) and does not guarantee equality to hold throughout (\ref{ineq-epi1}), for the second inequality in (\ref{ineq-epi1}) can still be strict. Likewise, Assumption~\ref{cond-uc-2}(i) alone (even with a nontrivial, compact strict subset $K_\epsilon(x)$ of $A(x)$) is in general not sufficient for the ACOI to hold. It is the two other parts of Assumption~\ref{cond-uc-2} that gave us the rest of the help needed.

\subsection{About Assumption~\ref{cond-uc-2}(ii), Weak Sequential Compactness, and Alternative Proof of Lemmas~\ref{lem-uc-2acoi}, \ref{lem-pc-2acoi}} \label{sec-maj-wsc}

The existence of majorizing finite measures required by (\ref{cond-uc-2b}) in Assumption~\ref{cond-uc-2}(ii) has the following important implication. For each state $x$, the set of probability measures, $\Q_\epsilon(x): = \{ q(dy \,|\, x, a) \mid a \in K_\epsilon(x)\}$, viewed as a subset in the Banach space of finite signed Borel measures on $\X$ (endowed with the total variation norm), must be \emph{weakly sequentially compact}, i.e., sequentially compact
\footnote{Here the notion of sequential compactness is as defined by Dunford and Schwartz \cite[Def.~I.6.10]{DuS57}.}  
w.r.t.\ the weak topology on the latter space induced by its topological dual \cite[Prop.~1.4.4, Cor.~1.4.5]{HL03}. Hence, any sequence in $\Q_\epsilon(x)$ has a subsequence that converges \emph{setwise} to some probability measure in $\P(\X)$ (cf.\ \cite[Chaps.~1.4.1, 1.5.2]{HL03}). 

This gives us an alternative proof of Lemmas~\ref{lem-uc-2acoi},~\ref{lem-pc-2acoi}. Let us sketch it for Lemma~\ref{lem-uc-2acoi} here (the arguments for these two lemmas are similar). As in the previous proof of Lemma~\ref{lem-uc-2acoi}, by Assumptions~\ref{cond-uc-1},~\ref{cond-uc-2}(iii), and the fact $\uh_n \leq \uh$, to prove the lemma, it suffices to prove the inequality
\begin{align*}
 \ell_1  : =  &  \liminf_{n \to \infty} \inf_{a \in K_\epsilon(x)} \left\{ c(x, a) + \int_\X \uh_n(y) \, q(dy \mid x, a) \right\} 
 \geq 
    \inf_{a \in K_\epsilon(x)} \left\{ c(x, a) +  \int_\X \uh(y) \, q(dy \mid x, a) \right\} =: \ell_2.
\end{align*}  
Now, take a sequence $\{a_n\} \subset K_\epsilon(x)$ and choose a subsequence $\{a_{n_k}\}$ from it such that (i) $\ell_1$ equals
$$ \liminf_{n \to \infty} \left\{ \!c(x, a_n) + \!\int_\X \uh_n(y) \, q(dy \mid x, a_n) \! \right\} 
 \! = \! \lim_{k \to \infty} \!\left\{ \!c(x, a_{n_k}) + \!\int_\X \uh_{n_k}(y) \, q(dy \mid x, a_{n_k}) \! \right\}\!,$$
and (ii)
$q(dy \,|\, x, a_{n_k})$ converges setwise to some probability measure $\bar p \in \P(\X)$ 
as $k \to \infty$.
Using the setwise convergence property (ii), the fact $\uh_n \uparrow \uh$, and the uniform integrability condition in Assumption~\ref{cond-uc-2}(iii), it can be shown
\footnote{To prove this, one can use arguments from \cite{FKL19,HLL00-b} or \cite[p.~231]{Roy68} about limit theorems for integrals involving setwise convergent measures.}
that as $k \to \infty$,
$$\int_\X \uh_{n_k}(y) \, q(dy \,|\, x, a_{n_k}) \to \int \uh \, d \bar p, \qquad \int_\X \uh(y) \, q(dy \,|\, x, a_{n_k}) \to \int \uh \, d \bar p.$$
This implies
$\ell_1 = \lim_{k \to \infty} \left\{ c(x, a_{n_k}) + \int_\X \uh(y) \, q(dy \,|\, x, a_{n_k}) \right\} \geq \ell_2$, the desired inequality to establish Lemma~\ref{lem-uc-2acoi}.

Note that the limit measure $\bar p$ in the above need not lie in $\{ q(dy \,|\, x, a) \mid a \in A(x) \}$; even if the subsequence $a_{n_k} \to \bar a \in A(x)$, it does not imply $q(dy\,|\, x, \bar a) = \bar p$. In other words, the weak sequential compactness of $\Q_\epsilon(x)$ entailed by the majorization condition in Assumption~\ref{cond-uc-2}(ii) is different from the condition of $q(dy \,|\, x, a)$ being setwise continuous in $a$ (for each $x$) considered by \cite{HLe91,Sch93}. 

The above discussion also suggests two possible directions to improve the results of this paper. One is to exploit the full potential of Egoroff's theorem, and the other is to use a measure-theoretic approach instead of Egoroff's theorem, under weaker conditions than Assumption~\ref{cond-uc-2}(ii).

Finally, we note that previously, \cite{GhH99,HeR01} applied majorization type conditions and the weak sequential compactness property to completely or partially observable, discounted-cost MDPs. However, their focuses are on the existence of nonrandomized stationary optimal policies (with the initial distribution being fixed in \cite{GhH99}) for l.s.c.\ MDP models.

\subsection{About the Assumption $\sup_{\alpha \in (0,1)} \|h_\alpha\|_w < \infty$ in the (UC) Case} \label{sec-bd-uc}

This is Assumption~\ref{cond-uc-1}. 
It holds if the MDP is \emph{uniformly $w$-geometrically ergodic},
\footnote{This means that every stationary nonrandomized policy $\mu$ induces a Markov chain on $\X$ with a unique invariant probability measure $\varphi_\mu$, and the $n$-step transition kernel $q_\mu^n(dy \,|\, x)$ of this Markov chain satisfies that $\| q^n_\mu - \varphi_\mu \|_w \leq R \, r^n$ for all $n \geq 1$, where
$$ \| q^n_\mu - \varphi_\mu \|_w : = \sup_{x \in \X} \, \sup_{| f| \leq w, \, f \in \M(\X)} w(x)^{-1} \cdot \Big| \int_\X f(y) \, q_\mu^n(dy \,|\, x) - \int_\X f(y) \, \varphi_\mu(dy) \Big|, $$
and $R > 0$, $0 < r < 1$ are constants \emph{independent of the policy $\mu$} (cf.~\cite[Assumption 10.2.2]{HL99}). If $w(\cdot) \equiv 1$, the MDP is called \emph{uniformly geometrically ergodic}.
\label{footnote-ergMDP}
} 
as shown by \cite[Lem.~10.4.2]{HL99}---this result applies in our case as well, in view of Theorem~\ref{thm-dcoe} on the existence of stationary $\epsilon$-optimal policies in the $\alpha$-discounted problems. Geometrically or $w$-geometrically ergodic Markov chains and their application to MDPs are well-studied; see the book \cite{HL99} for an in-depth discussion and a large body of the related literature.

Assumption~\ref{cond-uc-1} can also hold in MDPs that do not possess ergodicity properties. A simple class of such MDPs is the ``invariant'' model considered by Assaf \cite{Ass80}: $A(x) =\A$ for all $x \in \X$ and the state transition stochastic kernel is $q(dy \,|\, a)$, independent of $x$. In this case, if $|c(x,a)|  \leq \hat c \, w(x)$ for all $(x,a) \in \Gamma$ and a weight function $w(\cdot)$, then
\begin{equation} \label{eq-invmod1}
 | v_\alpha(x) - v_\alpha(\bar x) | \leq \textstyle{ \sup_{a \in \A}} | c(x, a) - c(\bar x, a) | \leq \hat c \big( w(x) + w(\bar x) \big) \quad \forall \, \alpha \in (0,1),
\end{equation} 
and hence $\sup_{\alpha \in (0,1)} \|h_\alpha\|_w < \infty$. In the special case of bounded one-stage costs, the family $\{h_\alpha\}$ is uniformly bounded.

For (UC) with an unbounded weight function $w(\cdot)$, Assumption~\ref{cond-uc-1} also holds for ``partially invariant'' models given in the next example.

\begin{example}[partially invariant models] \label{ex-pt-invmod}
Consider a slight extension of Assaf's invariant models \cite{Ass80} in the case of (UC). Let $b, \hat c \geq 0$ and $\lambda \in [0,1)$ be the constants in the model conditions of (UC), and let the weight function $w(\cdot)$ be l.s.a.
Suppose that for some $\lambda' \in (\lambda, 1)$, the MDP model is ``invariant'' on the subset of states
$$\hat{\X} : = \big\{ x \in \X \mid w(x) \leq b/(\lambda' - \lambda) \big\},$$
in the sense that for all $x \in \hat{\X}$, $\hat A : = A(x)$ is the same and $q(dy \,|\, x,a)$ depends only on $a$. 
Then (\ref{eq-invmod1}) holds for $x, \bar x \in \hat \X$ with $\hat A$ in place of $\A$, so
\begin{equation} \label{eq-invmod1b}
 \textstyle{ \big| v_\alpha(x) - v_\alpha(\bar x) \big| \leq \hat c \big( w(x) + w(\bar x) \big) \leq 2 b \hat c /(\lambda' - \lambda)} \quad \forall \, x, \bar x \in \hat \X.
\end{equation} 

Fix $\bar x \in \hat \X$. Let us bound $v_\alpha(x) - v_\alpha(\bar x)$ for $x \not\in \hat \X$. 
Let $\tau = \inf \{ n \geq 0 \mid x_n \in \hat \X \}$, the first entrance time to $\hat \X$. 
By the (UC) model condition (b) (cf.\ Def.~\ref{def-uc}) and the definition of the set $\hat \X$, we have that 
\begin{equation} \label{eq-invmod2a}
   \int_\X w(y) \, q( dy \mid x, a) \leq \lambda' w(x) \qquad \forall \, x \not\in \hat \X, \, a \in A(x).
\end{equation}   
It implies that under any policy $\pi$,
\begin{equation} \label{eq-invmod2}
 \E_x^\pi \big[ \textstyle{\sum}_{n=0}^{\tau-1} \, w(x_n) \big] \leq \ell w(x)  \qquad \forall \, x \not\in \hat \X,
\end{equation} 
where $\ell$ is some constant independent of $\pi$ and $x$. This inequality follows from a general comparison theorem \cite[Thm.~15.2.5] {MeT09} and can also be verified directly (in fact, $\ell = (1 - \lambda')^{-1}$ in this case). 
For each $\alpha \in (0,1)$, let $\mu_\alpha$ be a stationary $\epsilon$-optimal policy for the $\alpha$-discounted problem (cf.\ Theorem~\ref{thm-dcoe}). Using the inequality (\ref{eq-invmod2}) together with the (UC) model condition (a) and the strong Markov property,  we have
\begin{align}
    v_\alpha(x) & \leq  \E_x^{\mu_\alpha} \big[ \textstyle{\sum}_{n=0}^{\tau-1} \, \alpha^n c(x_n, a_n)  +  \alpha^\tau v_\alpha(x_{\tau}) \big] \notag \\
      & \leq \hat c \, \ell w(x) + v_\alpha(\bar x) + \E_x^{\mu_\alpha} \big[ \alpha^\tau v_\alpha(x_{\tau}) - v_\alpha(\bar x) \big],  \label{ineq-part-invmod1}\\    
      v_\alpha(x) & \geq  \E_x^{\mu_\alpha} \big[ \textstyle{\sum}_{n=0}^{\tau-1} \, \alpha^n c(x_n, a_n)  +  \alpha^\tau v_\alpha(x_{\tau}) \big] -  \epsilon \notag \\
      & \geq - \hat c \, \ell w(x) + v_\alpha(\bar x) +  \E_x^{\mu_\alpha} \big[ \alpha^\tau v_\alpha(x_{\tau}) - v_\alpha(\bar x) \big]  -  \epsilon. \label{ineq-part-invmod2}
\end{align}
Now 
\begin{align}
  \big| \E_x^{\mu_\alpha} \big[ \alpha^\tau v_\alpha(x_{\tau}) - v_\alpha(\bar x) \big] \big| & = \big|  \E_x^{\mu_\alpha} \big[ \alpha^\tau \big(v_\alpha(x_{\tau}) - v_\alpha(\bar x) \big) + (\alpha^\tau - 1) v_\alpha(\bar x) \big] \big| \notag \\
  & \leq \textstyle{ \sup_{x' \in \hat \X} \big| v_\alpha(x') - v_\alpha(\bar x) \big| + \E_x^{\mu_\alpha} [ 1 - \alpha^\tau ] \, \big| v_\alpha(\bar x) \big|.} \label{eq-invmod3}
\end{align}  
The first term in (\ref{eq-invmod3}) is bounded by a constant by (\ref{eq-invmod1b}).
To bound the second term, note that $\E_x^{\mu_\alpha} \big[ 1 - \alpha^\tau \big] \leq (1 - \alpha) \, \E_x^{\mu_\alpha} [ \tau ]$, and  $\E_x^{\mu_\alpha} [ \tau ] \leq \ell w(x)$ by (\ref{eq-invmod2}), whereas due to the (UC) model condition, $(1 - \alpha) |v_\alpha(\bar x)|$ is bounded by some constant $\ell_{\bar x}$ independent of $\alpha$. Therefore, the second term in (\ref{eq-invmod3}) is bounded by $ \ell \ell_{\bar x} w(x)$. We thus have
\begin{equation}  \label{eq-invmod3b}
\big| \E_x^{\mu_\alpha} \big[ \alpha^\tau v_\alpha(x_{\tau}) - v_\alpha(\bar x) \big] \big| \leq 2 b \hat c /(\lambda' - \lambda) +  \ell \ell_{\bar x} w(x).
\end{equation}
Combining this relation with the upper and lower bounds on $v_\alpha(x)$ given in (\ref{ineq-part-invmod1})-(\ref{ineq-part-invmod2}) and taking $\epsilon \to 0$, we obtain that for all $\alpha \in (0,1)$, 
\begin{align*}
       - \tilde \ell_{\bar x} w(x) - \tilde \ell  \leq v_\alpha(x) - v_\alpha(\bar x) & \leq \tilde \ell_{\bar x} w(x) + \tilde \ell 
\end{align*} 
where $\tilde \ell_{\bar x} =  \hat c \, \ell + \ell \ell_{\bar x}$ and $\tilde \ell = 2 b \hat c /(\lambda' - \lambda)$ are constants independent of $\alpha$ and $x$.
This shows that $\sup_{\alpha \in (0,1)} \|h_\alpha\|_w < \infty$.
\myqed
\end{example}

\begin{rem}[ACOI for invariant and partially invariant models]
For invariant models, Assaf \cite{Ass80} proved the ACOE when the one-stage costs are bounded and the action space is finite. Sch{\"a}l \cite[sect.~4, p.~169]{Sch93} showed that, for a Borel action space, if $c$ is bounded below with $\sup_{a \in \A} c(x, a) < \infty$ for all $x \in \X$, then $\sup_{\alpha \in (0,1)} h_\alpha(x) < \infty$ for all $x \in \X$. Thus, in that case, the ACOI holds under our majorization condition for (PC), by Theorem~\ref{thm-pc-acoi}.
In the (UC) case, by Theorem~\ref{thm-uc-acoi}, the ACOI also holds under our majorization condition, for both the invariant models and the  partially invariant models described in the preceding example.
\myqed
\end{rem}

We will give an illustrative example in Appendix~\ref{appsec-ex-uc} to demonstrate another way to ensure that Assumption~\ref{cond-uc-1} holds in an MDP that is not necessarily ergodic. There we will use some arguments from Example~\ref{ex-pt-invmod}. In general, however, verifying Assumption~\ref{cond-uc-1} is a hard problem since it involves the optimal value functions $\{v_\alpha\}$.

\subsection{About Geometric Ergodicity Conditions, ACOI, and ACOE in the (UC) Case} \label{sec-acoe-uc}

For a uniformly $w$-geometrically ergodic MDP (cf.\ footnote~\ref{footnote-ergMDP} in Section~\ref{sec-bd-uc}), under compactness/continuity and additional conditions, the ACOI has been strengthened to the ACOE; see e.g., Gordienko and Hern\'{a}ndez-Lerma~\cite[Thm.~2.8]{GHL95}, Hern\'{a}ndez-Lerma and Vega-Amaya \cite[Thm.~3.5(a)]{HLV98}, Ja\'{s}kiewicz and Nowak~\cite[Thm.~3]{JaN06}, and also the book \cite[Thm.~10.3.6]{HL99}. 
As recounted in Section~\ref{sec-1}, the ACOE was also directly obtained through a contraction-based, fixed point approach in Vega-Amaya~\cite{VAm03,VAm18} for MDPs under certain compactness and continuity conditions, and in Ja\'{s}kiewicz~\cite{Jas09} for SMDPs with u.m.\ policies.
\footnote{One difference between the minorization conditions in \cite{Jas09} and \cite{VAm03,VAm18} is that the former is stated in terms of ``small sets,'' whereas the latter ``small functions'' (cf.\ \cite[Chap.\ 2.3]{Num84}).}
(In the special case $w(\cdot) \equiv 1$, the ACOE was obtained earlier, through the fixed point approach, by Gubenko and Shtatland~\cite{GuS75}, as mentioned in the introduction.)
 
For these ACOE results to hold in an MDP, among others, the MDP must be uniformly $w$-geometrically ergodic,
\footnote{In~\cite{VAm03,VAm18}, this follows as an implication of the conditions involved, by \cite[Thm.~3.3]{VAm03} together with \cite[Thm.~6]{Kar85} (see also \cite[Thm.~7.3.11]{HL99}).}
and moreover, there must exist a nontrivial $\sigma$-finite measure $\varphi$ on $\X$ such that under any stationary nonrandomized policy $\mu$, the induced Markov chain $\{x_n\}$ is \emph{$\varphi$-irreducible} (see e.g.,~\cite{HL99,MeT09,Num84} for definition). The existence of such a common irreducibility measure appears either in the conditions of the aforementioned results (e.g., \cite[Thm.~10.3.6]{HL99}) or follows as an implication of their assumptions (e.g., \cite{Jas09,VAm03}). 

The uniform $w$-geometric ergodicity condition implies that for any \emph{two} stationary nonrandomized policies, there must be a common irreducibility measure, but a common irreducibility measure for \emph{all} stationary nonrandomized policies need not exist. This is illustrated by the following counterexample.

\begin{example}[non-existence of common irreducibility measure] \rm 
Let $\X$ be the unit circle and $\A = \{0,1,2\} = A(x)$ for all $x \in \X$. Represent states by their angular coordinates. Let $q(dy \,|\, x, a)$ be the uniform distribution on $[\tfrac{2a}{3} \pi, \, \tfrac{2a}{3} \pi + \pi] = : I_a$ for $a \in \A$ and every $x$. Then the MDP is uniformly geometrically ergodic, and the \emph{maximal irreducibility measures} (see~\cite{MeT09,Num84} for definition) that are possible under any nonrandomized stationary policy, are (up to equivalence) the Lebesgue measure restricted on the intervals $I_a$, $a = 0, 1, 2$, on the unions of any two such intervals, and on $[0, 2 \pi]$. Since the intersection of these intervals is empty, there is no common irreducibility measure for all stationary nonrandomized policies in this MDP. \myqed
\end{example}

This example shows that the uniform $w$-geometric ergodicity condition is more general than the conditions required by the fixed point approach studied in \cite{Jas09,VAm03,VAm18}, whereas Assumption~\ref{cond-uc-1} is more general than the former condition, as discussed in Section~\ref{sec-bd-uc}. 
This suggests that most likely, the ACOI in our Theorem~\ref{thm-uc-acoi} cannot be strengthened to ACOE without additional assumptions. However, we do not know a counterexample for ACOE under the conditions of Theorem~\ref{thm-uc-acoi}. 

\subsection{An Illustrative Example for the (PC) Case} \label{sec-ex-pc}

In this subsection, we use an inventory control example to illustrate the application of our ACOI result for the (PC) case.
This example is adapted from an example in \cite[p.~170]{Sch93}. The original example has compact action sets $A(x)$ and i.i.d.\ random demands. We make $A(x)$ non-compact and also allow action-dependent random demands, with which the state transition stochastic kernel need not be continuous. We will verify two major conditions for applying Theorem~\ref{thm-pc-acoi}: our majorization condition and the condition (B), $\sup_{\alpha \in (0,1)} h_\alpha(x) < \infty$, $x \in \X$, which implies Assumption~\ref{cond-pc-1}(\underline{B}). 

\subsubsection{Two Helpful Lemmas}
The condition (B) is not easy to check because it involves the optimal value functions $v_\alpha$, whose structures are problem-dependent and hard to characterize in general. Most of our efforts will be on verifying this condition. We will need two helpful lemmas given below. They are from \cite[Lems.~4.4,~4.6]{Sch93}, with slight modifications in the first one. Their proof arguments are outlined in \cite{Sch93}, which utilize various stopping times. For completeness, we include a proof for the second lemma (Lemma~\ref{lem-Sch2} below) in Appendix~\ref{appsec-prf}, as it is less obvious.

Let $\uc(x) : = \inf_{a \in A(x)} c(x,a)$ and $\underline{g}^* : = \inf_{x \in \X} g^*(x)$. Note that $\uc(\cdot)$ is l.s.a.~\cite[Prop.~7.47]{bs} and $\underline{g}^* < \infty$ under Assumption~\ref{cond-pc-1}(G). Define also
$$ \textstyle{   \uv_\alpha (x) : = \inf_{a \in A(x)} \int_\X v_\alpha(y) \, q(dy \mid x, a), \qquad \um_\alpha : = \inf_{x \in \X} \uv_\alpha(x).}$$
Note that $\alpha \um_\alpha \leq m_\alpha \leq \um_\alpha$, the function $\uv_\alpha$ is l.s.a.~\cite[Props.~7.47,~7.48]{bs}, and by \cite[Prop.~7.50]{bs}, for any $\epsilon > 0$, there is a (u.m.)~nonrandomized stationary policy $\mu^\epsilon_\alpha$ with
\begin{equation} \label{eq-mu-al-ep}
   \textstyle{\int_\X v_\alpha(y) \, q(dy \mid x, \mu^\epsilon_\alpha(x)) \leq \uv_\alpha(x) + \epsilon \qquad \forall \, x \in \X.}
\end{equation}   
Denote by $\F_n$ the $\sigma$-algebra generated by $(x_0, a_0, \ldots, x_n, a_n)$.

\begin{lemma}[cf.\ {\cite[Lem.~4.4]{Sch93}}] \label{lem-Sch1}
Let $\eta, \epsilon > 0$. Let $\pi$ be any policy and $\tau$ be any stopping time w.r.t.\ $\{\F_n\}$ such that $ \alpha \, \uv_\alpha(x_\tau) \leq m_\alpha + \eta$ on $\{ \tau < \infty \}$. 
Then 
$$ h_\alpha(x) \leq \eta + \epsilon + \E^\pi_x \left[ \, \sum_{n=0}^{\tau -1} c(x_n, a_n) + c\big(x_{\tau}, \mu^\epsilon_\alpha(x_\tau)\big) \right] \qquad \forall \, x \in \X. $$
\end{lemma}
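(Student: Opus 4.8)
The plan is to bound $v_\alpha(x)$ from above by the $\alpha$-discounted cost of an admissible policy that imitates $\pi$ up to the stopping time $\tau$ and then switches to the selector $\mu^\epsilon_\alpha$, using the hypothesis on $\alpha\,\uv_\alpha(x_\tau)$ precisely at the switch. Concretely, I would fix $\epsilon' > 0$ and, invoking the existence of nonrandomized stationary $\epsilon'$-optimal policies for the $\alpha$-discounted problem (Theorem~\ref{thm-dcoe}), pick $\sigma_{\epsilon'}$ with $v^{\sigma_{\epsilon'}}_\alpha \le v_\alpha + \epsilon'$ everywhere. I then assemble a policy $\pi'$ that (i) applies $\pi$ at every stage $n < \tau$, (ii) on $\{\tau<\infty\}$ applies $\mu^\epsilon_\alpha(x_\tau)$ at stage $\tau$, and (iii) follows $\sigma_{\epsilon'}$ from stage $\tau+1$ onward, while on $\{\tau=\infty\}$ it simply coincides with $\pi$. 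Since $v_\alpha$ is the optimal $\alpha$-discounted value, $v_\alpha(x) \le v^{\pi'}_\alpha(x)$ for all $x$.

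Next I would decompose $v^{\pi'}_\alpha(x)$ at $\tau$. Conditioning on $\F_\tau$ and using the Markov structure of the strategic measure constructed in \cite[Prop.~7.45]{bs}, on $\{\tau<\infty\}$ the continuation cost from stage $\tau$ on is at most $c(x_\tau,\mu^\epsilon_\alpha(x_\tau)) + \alpha\int_\X v_\alpha(y)\,q(dy\mid x_\tau,\mu^\epsilon_\alpha(x_\tau)) + \alpha\epsilon'$, the last term coming from the $\epsilon'$-optimality of $\sigma_{\epsilon'}$. Bounding $\int_\X v_\alpha\,q(\cdot\mid x_\tau,\mu^\epsilon_\alpha(x_\tau)) \le \uv_\alpha(x_\tau)+\epsilon$ (the defining property~(\ref{eq-mu-al-ep}) of $\mu^\epsilon_\alpha$) and then applying the hypothesis $\alpha\,\uv_\alpha(x_\tau)\le m_\alpha+\eta$ turns the switch value into $c(x_\tau,\mu^\epsilon_\alpha(x_\tau)) + m_\alpha + \eta + \alpha\epsilon + \alpha\epsilon'$. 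This yields
\[
v_\alpha(x) \;\le\; \E^\pi_x\!\Big[\, \sum_{n=0}^{\tau-1}\alpha^n c(x_n,a_n) \,+\, \ind\{\tau<\infty\}\,\alpha^\tau\big(c(x_\tau,\mu^\epsilon_\alpha(x_\tau)) + m_\alpha + \eta + \alpha\epsilon + \alpha\epsilon'\big)\Big],
\]
with the convention that on $\{\tau=\infty\}$ the first sum runs over all $n\ge 0$ (and there is no switch term). If the right-hand side is infinite the asserted bound is trivial, so I may assume finiteness.

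The remaining step is arithmetic. Since $c\ge 0$ and $m_\alpha\ge 0$ in (PC), the constant $C:=m_\alpha+\eta+\alpha\epsilon+\alpha\epsilon'$ is nonnegative and $\E^\pi_x[\ind\{\tau<\infty\}\alpha^\tau]\le 1$, so the last group contributes at most $C$. Moving $m_\alpha$ to the left to form $h_\alpha(x)=v_\alpha(x)-m_\alpha$, discarding the discount factors $\alpha^n,\alpha^\tau\le 1$ (legitimate because $c\ge 0$), using $\alpha\epsilon\le\epsilon$, and finally letting $\epsilon'\downarrow 0$ produces exactly
\[
h_\alpha(x)\;\le\;\eta+\epsilon+\E^\pi_x\!\Big[\,\sum_{n=0}^{\tau-1}c(x_n,a_n)+c\big(x_\tau,\mu^\epsilon_\alpha(x_\tau)\big)\Big],
\]
where the term $c(x_\tau,\mu^\epsilon_\alpha(x_\tau))$ is understood to be present only on $\{\tau<\infty\}$.

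I expect the main obstacle to be the rigorous decomposition at $\tau$: verifying that the concatenated policy $\pi'$ is a genuine universally measurable policy obeying the control constraint, and justifying the strong-Markov splitting of $v^{\pi'}_\alpha$ at the (possibly infinite) stopping time within the universal-measurability framework. Everything else is elementary, and it is the nonnegativity of costs in (PC) that makes both the monotone handling of the event $\{\tau=\infty\}$ and the discarding of the discount factors clean. An alternative route avoiding the explicit construction of $\pi'$ is to apply the $\alpha$-DCOE inequality stagewise under $\pi$, showing that $\alpha^{n}v_\alpha(x_n)+\sum_{k<n}\alpha^k c(x_k,a_k)$ is a submartingale and invoking optional stopping; this, however, requires controlling the boundary term $\alpha^n v_\alpha(x_n)$ on $\{\tau>n\}$, which is precisely what the policy-switching argument sidesteps.
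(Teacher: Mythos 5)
Your argument is correct and is essentially the policy-switching proof that the paper itself omits for this lemma (it defers to \cite[Lem.~4.4]{Sch93}, reproducing only the proof of Lemma~\ref{lem-Sch2}): follow $\pi$ up to $\tau$, apply $\mu^\epsilon_\alpha(x_\tau)$ at the switch, continue with an $\epsilon'$-optimal stationary policy, invoke (\ref{eq-mu-al-ep}) and the hypothesis $\alpha\,\uv_\alpha(x_\tau)\leq m_\alpha+\eta$, and use $c\geq 0$, $m_\alpha\geq 0$ to discard the discount factors and absorb $m_\alpha\,\E^\pi_x[\ind\{\tau<\infty\}\,\alpha^\tau]\leq m_\alpha$. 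The one point you should make explicit is that the concatenated policy $\pi'$ is well defined only when $\{\tau=n\}$ is determined by $(x_0,a_0,\ldots,a_{n-1},x_n)$, i.e., \emph{before} $a_n$ is chosen---which holds for the state-hitting times actually used in the proof of Proposition~\ref{ex-storage-prp1}, even though $\F_n$ as defined in the paper also contains $a_n$, so the construction would be circular for a stopping time genuinely depending on $a_\tau$.
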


\begin{lemma}[cf.\ {\cite[Lem.~4.6]{Sch93}}] \label{lem-Sch2}
Let Assumption~\ref{cond-pc-1}(G) hold. Let $\epsilon > 0$ and $G \subset \Gamma$ be a u.m.\ set such that $\int_\X \uc(y) \, q(dy \, |\, x,a) \geq \underline{g}^* + \epsilon$ for all $(x,a) \in G$. Then, for some $\alpha_\epsilon < 1$, it holds for all $\alpha \geq \alpha_\epsilon$ that
$$   \int_\X v_\alpha(y) \, q(dy \mid x, a) \geq \um_\alpha + \epsilon/2 \qquad \forall \, (x, a) \in G.$$
\end{lemma}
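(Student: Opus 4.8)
The plan is to obtain a pointwise lower bound on $v_\alpha$ directly from the $\alpha$-DCOE and then integrate it against $q(dy \mid x,a)$ over the set $G$, rather than tracking the stopping-time arguments of \cite{Sch93}. First I would observe that the $\alpha$-DCOE (Theorem~\ref{thm-dcoe}) gives, for every $y \in \X$,
\[
 v_\alpha(y) = \inf_{a \in A(y)} \Big\{ c(y,a) + \alpha \int_\X v_\alpha(z) \, q(dz \mid y, a) \Big\} \geq \uc(y) + \alpha \, \um_\alpha,
\]
since $c(y,a) \geq \uc(y)$ and $\int_\X v_\alpha(z)\, q(dz \mid y,a) \geq \uv_\alpha(y) \geq \um_\alpha$ for each admissible $a$. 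All integrals here are of u.m.\ functions and are well-defined, and in (PC) every quantity is nonnegative; moreover, under Assumption~\ref{cond-pc-1}(G) one has $m_\alpha < \infty$ for $\alpha$ near $1$, so $\um_\alpha \le m_\alpha/\alpha < \infty$ there and the bound is nontrivial.

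Next I would integrate this bound against $q(dy \mid x,a)$ for a pair $(x,a) \in G$ and use the defining property of $G$, namely $\int_\X \uc(y)\, q(dy \mid x,a) \geq \underline{g}^* + \epsilon$, to get
\[
 \int_\X v_\alpha(y)\, q(dy \mid x,a) \geq \underline{g}^* + \epsilon + \alpha \, \um_\alpha \qquad \forall \, (x,a) \in G.
\]
It then suffices to choose $\alpha_\epsilon < 1$ so that $\underline{g}^* + \epsilon + \alpha\,\um_\alpha \geq \um_\alpha + \epsilon/2$ for all $\alpha \geq \alpha_\epsilon$, i.e., so that $(1-\alpha)\,\um_\alpha \leq \underline{g}^* + \epsilon/2$; rearranging the displayed inequality then yields the claimed conclusion.

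The remaining, and really the only delicate, step is to control $(1-\alpha)\,\um_\alpha$ as $\alpha \uparrow 1$. Here I would use the relation $\alpha\,\um_\alpha \leq m_\alpha$ recorded just before the lemma, which together with $\um_\alpha \geq 0$ gives $(1-\alpha)\,\um_\alpha \leq (1-\alpha)\,m_\alpha/\alpha$. Assumption~\ref{cond-pc-1}(G) and \cite[Lem.~1.2(b)]{Sch93} provide $\limsup_{\alpha \uparrow 1} (1-\alpha)\,m_\alpha \leq \underline{g}^* < \infty$, whence $\limsup_{\alpha \uparrow 1}(1-\alpha)\,\um_\alpha \leq \underline{g}^*$ as well, and an $\alpha_\epsilon < 1$ with $(1-\alpha)\,\um_\alpha \leq \underline{g}^* + \epsilon/2$ for $\alpha \geq \alpha_\epsilon$ exists. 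The main thing to watch is that (G) bounds $m_\alpha$ and not $\um_\alpha$ directly, so the inequality $\alpha\,\um_\alpha \le m_\alpha$, together with the nonnegativity of costs in (PC) (which secures both $\um_\alpha \ge 0$ and the finiteness of $\um_\alpha$ for $\alpha$ near $1$), is exactly what bridges the two quantities and closes the argument.
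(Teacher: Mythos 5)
Your proof is correct, and it takes a genuinely different and more elementary route than the paper's. The paper follows Sch{\"a}l's original argument: it builds a policy $\pi$ that plays $a$ at $x$ and thereafter an $\epsilon/4$-optimal stationary policy for the $\alpha$-discounted problem, introduces the exit time $\tau = \inf\{n \geq 0 \mid (x_n,a_n) \notin G\}$, unrolls the discounted cost up to $\tau$ using the hypothesis on $G$ at each step before exit and $\E^\pi_x[v_\alpha(x_{n+1}) \mid \F_n] \geq \um_\alpha$ at exit, and closes via the telescoping identity $\sum_{n=1}^{\tau}\alpha^{n-1}(1-\alpha)\um_\alpha + \alpha^\tau \um_\alpha = \um_\alpha$ (requiring $(1-\alpha)\um_\alpha \leq \underline{g}^* + \epsilon/4$). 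You instead extract the pointwise bound $v_\alpha(y) \geq \uc(y) + \alpha\,\um_\alpha$ from a single application of the $\alpha$-DCOE --- lower-bounding each of the two terms inside the infimum by its own infimum --- and integrate it against $q(dy \mid x,a)$; this meshes exactly with the hypothesis because that hypothesis is phrased in terms of the one-step expectation $\int_\X \uc(y)\,q(dy \mid x,a)$, so no trajectory or stopping-time machinery is needed. Your handling of the only delicate point is also right: both proofs consume Assumption (G) identically, via $\limsup_{\alpha \uparrow 1}(1-\alpha)m_\alpha \leq \underline{g}^*$ from \cite[Lem.~1.2(b)]{Sch93} and the elementary chain $\alpha\,\um_\alpha \leq m_\alpha \leq \um_\alpha$, which together give $(1-\alpha)\um_\alpha \leq \underline{g}^* + \epsilon/2$ for $\alpha$ near $1$ (and the finiteness and nonnegativity of $\um_\alpha$ in the (PC) model keep all manipulations legitimate). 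What the paper's multi-step argument buys is robustness to weaker hypotheses --- e.g., a condition on $\uc$ itself or on the behavior of the chain over several stages inside $G$ --- but for the lemma as stated here your one-step argument is a clean simplification.
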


\subsubsection{Single-Product Inventory System} \label{sec-pc-ex}

Let $\X = \A = \R$, $A(x) = [x, +\infty)$, and $c(x, a) = \kappa(a-x) + \psi(a)$ where $\kappa: \R_+ \to \R_+$ and $\psi: \R \to \R_+$.
Let the states evolve according to $x_{n+1} = a_n - \xi_n$, where, for the $n$th stage:
\begin{itemize}[leftmargin=0.5cm,labelwidth=!]
\item $x_n$ and $a_n$ are the stock levels before and after placing the order, respectively;
\item $\xi_n \geq 0$ is the demand or consumption of the product, which is a random variable that does not depend on the history $(x_0, a_0, \xi_0, \ldots, x_n)$ given $a_n$;
\item $\kappa(a_n - x_n)$ and $\psi(a_n)$ correspond to the ordering cost and the holding or shortage/backordering cost, respectively.
\end{itemize}
We assume that for all $n \geq 0$, the conditional distributions of $\xi_n$ given $a_n$ are parametrized by the values of $a_n$ (and Borel measurable in $a_n$), independent of $n$. These probability distributions will be denoted by $F_a$, $a \in \R$. In what follows, we shall write $\xi_n(a_n)$ for $\xi_n$ to emphasize the dependence on $a_n$, and we use $\xi(a)$ to denote a generic random variable with probability distribution $F_a$ and $\E [ \xi(a) ]$ to denote its expectation w.r.t.\ $F_a$.

We impose two sets of conditions on the functions $\kappa(\cdot), \psi(\cdot)$ and the demand distributions $F_a$.
The first set is given below and will be used shortly to prove that the family $\{h_\alpha\}$ is pointwise bounded under those conditions. When we consider the majorization condition later, we will add a second set of conditions. 

Throughout this example, let Assumption~\ref{cond-pc-1}(G) hold
\footnote{In fact, (G) holds under Assumption~\ref{ex-storage-cond1}(i) and (iv): it can be easily verified that $J(\pi, 0) < \infty$ for the nonrandomized stationary policy $\pi$ that has $\pi(x) = 0$ for $x < 0$ and $\pi(x) = x$ for $x \geq 0$.}
(hence $\underline{g}^* < \infty$).

\begin{assumption}[conditions on $\kappa(\cdot), \psi(\cdot)$, and $\{\xi(a)\}$] \label{ex-storage-cond1} \hfill 
\begin{enumerate}[leftmargin=0.6cm,labelwidth=!]
\item[\rm (i)] The ordering cost $\kappa(\cdot)$ and the holding or shortage cost $\psi(\cdot)$ are bounded above on bounded intervals; moreover, $\kappa(\cdot)$ is non-decreasing and $\kappa(0) = 0$.
\item[\rm (ii)] $\liminf_{z \to \infty} \kappa(z) > \underline{g}^*$ and $\liminf_{|a| \to \infty} \psi(a) > \underline{g}^*$.  
\item[\rm (iii)] For any $\bar a \in \R$, $\{\xi(a) \, | \, a \geq \bar a\}$ is uniformly integrable.
\item[\rm (iv)] For any bounded interval $I \subset \R$ and any given $z \geq 0$, 
$$\textstyle{\inf_{a \in I} \E[\xi(a)] > 0 \ \ \ \text{and} \ \ \ \sup_{a \in I} \E \big[ \kappa \big( z + \xi(a) \big) \big] < \infty.}$$
\end{enumerate}
\end{assumption}

Note that here $\kappa(\cdot)$ and $\psi(\cdot)$ need not be l.s.c., and $q(dy\,|\,x,a)$ need not be continuous. As a simple example, these functions can be piecewise continuous; even in this case, the compactness/continuity conditions required in \cite{CoD12,FKZ12,Sch93} are violated.

\begin{prop} \label{ex-storage-prp1}
Under Assumption~\ref{ex-storage-cond1}, for all $x \in \R$, $\sup_{\alpha \in (0,1)} h_\alpha(x) < \infty$. 
\end{prop}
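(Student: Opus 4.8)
The plan is to apply Lemma~\ref{lem-Sch1}: for a fixed initial state $x$ it suffices to produce a (u.m.)~policy $\pi$ and a stopping time $\tau$ such that, on $\{\tau<\infty\}$, the stopping state obeys $\alpha\,\uv_\alpha(x_\tau)\leq m_\alpha+\eta$, and for which the expected accumulated cost $\E^\pi_x[\sum_{n=0}^{\tau-1}c(x_n,a_n)+c(x_\tau,\mu^\epsilon_\alpha(x_\tau))]$ is bounded by a constant independent of $\alpha$, for all $\alpha$ near $1$. For $\alpha$ bounded away from $1$ the bound is immediate, since $h_\alpha(x)=v_\alpha(x)-m_\alpha\leq v_\alpha(x)\leq v^\pi_\alpha(x)$ for a base-stock policy $\pi$ of bounded expected per-stage cost, giving an $O(1/(1-\alpha))$ control that is finite on any $[\alpha_0,\alpha_\epsilon]$. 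So the real work is the range $\alpha\geq\alpha_\epsilon$.

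The key structural observation is that $q(dy\mid x,a)$ depends only on $a$ (the next state is $a-\xi(a)$), so the continuation value $V_\alpha(a):=\int_\X v_\alpha(y)\,q(dy\mid x,a)=\E[v_\alpha(a-\xi(a))]$ is a function of $a$ alone, and $\uv_\alpha(x)=\inf_{a\geq x}V_\alpha(a)$, $\um_\alpha=\inf_{a\in\R}V_\alpha(a)$. Fixing small $\epsilon,\eta>0$ with $\eta<\epsilon/2$, I would first use Assumption~\ref{ex-storage-cond1}(ii) together with $\uc(y)=\inf_{a\geq y}[\kappa(a-y)+\psi(a)]$ to obtain a bounded interval $[-L,L]$ with $\uc(y)>\underline{g}^*+\epsilon$ whenever $|y|>L$. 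The crucial step is then to confine the near-minimizers of $V_\alpha$ to a fixed bounded interval $[-L,U]$ for all $\alpha\geq\alpha_\epsilon$: for $a<-L$ one has $a-\xi(a)<-L$ a.s., so $\int_\X\uc\,dq\geq\underline{g}^*+\epsilon$ and Lemma~\ref{lem-Sch2} forces $V_\alpha(a)\geq\um_\alpha+\epsilon/2$; for $a$ large, Assumption~\ref{ex-storage-cond1}(iii) gives the uniform tail bound $\sup_{a'}P(\xi(a')\geq a-L)\to 0$, so $a-\xi(a)>L$ with probability near $1$ and $\int_\X\uc\,dq>\underline{g}^*+\epsilon/2$, whence Lemma~\ref{lem-Sch2} again yields $V_\alpha(a)>\um_\alpha+\epsilon/2$. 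This is where Lemma~\ref{lem-Sch2} is essential: a crude bound via $v_\alpha\geq\uc$ fails because $\um_\alpha\to\infty$, and only the comparison to $\um_\alpha$ itself suffices. Consequently every $a^*$ with $V_\alpha(a^*)\leq\um_\alpha+\eta$ lies in $[-L,U]$; in particular $a^*_\alpha\geq-L$, so every state $x'\leq -L$ satisfies $\uv_\alpha(x')=\um_\alpha$ and hence $\alpha\,\uv_\alpha(x')=\alpha\,\um_\alpha\leq m_\alpha$, the stopping condition of Lemma~\ref{lem-Sch1}.

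I would then take $\pi$ to be the order-nothing policy ($a_n=x_n$) and $\tau=\inf\{n:x_n\leq -L\}$, noting that $\pi$ is $\alpha$-free, which is what makes the resulting bound uniform. While $n<\tau$ the states lie in $(-L,x]$, where Assumption~\ref{ex-storage-cond1}(iv) provides a uniform positive drift $\inf_{a\in[-L,x]}\E[\xi(a)]=:\delta_0>0$; a standard optional-stopping/renewal estimate then gives $\E^\pi_x[\tau]\leq (x+L)/\delta_0+1<\infty$, while the per-stage cost $c(x_n,x_n)=\psi(x_n)$ is bounded on $(-L,x]$ by Assumption~\ref{ex-storage-cond1}(i). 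Hence $\E^\pi_x[\sum_{n<\tau}c(x_n,a_n)]$ is finite and independent of $\alpha$.

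The hard part will be the terminal term $\E^\pi_x[c(x_\tau,\mu^\epsilon_\alpha(x_\tau))]$, since $x_\tau$ may overshoot far below $-L$ and $\mu^\epsilon_\alpha$ is an unknown near-optimal selector. I would control it in two pieces. First, $x_\tau\leq -L$ gives $\uv_\alpha(x_\tau)=\um_\alpha$, so the action $a=\mu^\epsilon_\alpha(x_\tau)$ obeys $V_\alpha(a)\leq\um_\alpha+\epsilon$; running the confinement argument above with a slightly larger margin places $a$ in a fixed bounded interval, on which $\psi$ is bounded by Assumption~\ref{ex-storage-cond1}(i). Second, using $a-x_\tau\leq a+L+\xi_{\tau-1}$ and the monotonicity of $\kappa$, the ordering cost is dominated by $\kappa(z+\xi_{\tau-1})$ for a fixed constant $z$, whose conditional expectation is at most $\sup_{a'\in(-L,x]}\E[\kappa(z+\xi(a'))]<\infty$ by Assumption~\ref{ex-storage-cond1}(iv). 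Combining the three estimates in Lemma~\ref{lem-Sch1} yields a bound on $h_\alpha(x)$ uniform over $\alpha\geq\alpha_\epsilon$, which together with the easy estimate for $\alpha\leq\alpha_\epsilon$ gives $\sup_{\alpha\in(0,1)}h_\alpha(x)<\infty$.
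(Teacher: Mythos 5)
Your proposal follows essentially the same route as the paper's proof: Lemma~\ref{lem-Sch2} is used to confine the near-minimizers of $a \mapsto \int_\X v_\alpha(y)\, q(dy \mid a)$ to a fixed bounded interval (so that stopping once the stock falls below its left endpoint satisfies the hypothesis of Lemma~\ref{lem-Sch1}), the order-nothing policy with that first-passage time gives an $\alpha$-free bound on the accumulated cost via a drift estimate together with Assumption~\ref{ex-storage-cond1}(i) and (iv) and the monotonicity of $\kappa$, and Lemma~\ref{lem-Sch1} closes the argument. The one step stated too casually is the bound $\E^\pi_x[\tau] \leq (x+L)/\delta_0 + 1$: since the overshoot below the threshold is unbounded and its expectation cannot be controlled without circularity, the naive optional-stopping estimate does not close, and one must (as the paper does) apply the comparison theorem to the truncated process $Z_n = (x_n - L + D_x)^+$, with the constant $D_x$ supplied by the uniform integrability in Assumption~\ref{ex-storage-cond1}(iii); this is a routine fix that changes only the constant in your bound.
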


\begin{proof}
We follow the line of reasoning in~\cite[sect.~4]{Sch93}. When computing upper bounds on $h_\alpha(x)$, however, we will need to use more complex arguments because the random demands in our case are not i.i.d.\ as in \cite{Sch93}. 

Let us write $q(dy \,|\, a)$ for $q(dy \,|\, x, a)$, since state transitions in this example depend on $a$ only. First, we use Lemma~\ref{lem-Sch2} to assert the existence of a bounded interval $[L, M]$ such that for some $\bar \epsilon > 0$ and $\alpha_{\bar \epsilon} < 1$, it holds \emph{for all $\alpha \geq \alpha_{\bar \epsilon}$} that
\begin{equation} \label{exst-ya}
 \exists \, y_\alpha \in [L, M] \ \ \text{with} \ \ \int_\X v_\alpha(y) \, q(dy \mid y_\alpha) \leq \um_\alpha + \bar \epsilon/2.
\end{equation} 
(This is proved by a direct calculation of a set $G$ in the condition of Lemma~\ref{lem-Sch2} using the definition of the function $\uc$ and Assumption~\ref{ex-storage-cond1}(ii)-(iii).) The existence of such $y_\alpha$ allows us to define a stopping time $\tau$ needed in applying Lemma~\ref{lem-Sch1} to bound $h_\alpha$: since for all $x' \leq y_\alpha$, we have $y_\alpha \in A(x')$ and
\begin{equation} \label{ineq-exst-1}
 \alpha \, \uv_\alpha(x') \leq \alpha \int_\X v_\alpha(y) \, q(dy \mid y_\alpha) \leq \alpha \, \um_\alpha + \bar \epsilon/2 \leq m_\alpha + \bar \epsilon/2,
\end{equation}
stopping at any state $x' \leq y_\alpha$ will satisfy the condition of Lemma~\ref{lem-Sch1} on $\tau$. 
This is the basic idea to bound $\sup_{\alpha \in (0,1)} h_\alpha(x)$. We now proceed to calculate the bounds on $h_\alpha(x)$ for each $x \in \R$ and an arbitrary $\alpha \geq \alpha_{\bar \epsilon}$.

Define a stationary policy $\mu$ by 
$$ \mu(x) = x \ \  \ \text{if} \ \ x \geq L, \qquad \mu(x) = y_\alpha \ \ \ \text{if} \ \ x < L.$$
Let $\tau : = \inf \{ n \geq 0 \mid x_n < L \}$. 
We will apply Lemma~\ref{lem-Sch1} with this $\tau$ and $\pi=\mu$. To this end, let us bound $\E_x^\mu \left[ \, \sum_{n=0}^{\tau -1} c(x_n, a_n) + c(x_{\tau}, y_\alpha) \right]$ independently of $\alpha$. The case $x < L$ is simple: we have $\tau = 0$ and under Assumption~\ref{ex-storage-cond1}(i),
\begin{equation}
c(x, y_\alpha) = \kappa(y_\alpha - x) + \psi(y_\alpha) \leq  \kappa(M-x) + \textstyle{\sup_{y \in [L, M]}} \, \psi(y) < \infty. \label{exst-ineq1b} 
\end{equation} 
For $x \geq L$, since $\kappa(0)=0$ by Assumption~\ref{ex-storage-cond1}(i), we have
\begin{align}
 \E_x^\mu \left[ \, \sum_{n=0}^{\tau -1} c(x_n, a_n) + c(x_{\tau}, y_\alpha) \right] & = \E_x^\mu \left[ \, \sum_{n=0}^{\tau -1} \psi(x_n) + \psi(y_\alpha) + \kappa(y_\alpha - x_{\tau})  \right] \notag \\
 & \leq \E_x^\mu [\tau] \cdot \sup_{y \in [L, \, x \vee M]} \psi(y)  + \E_x^\mu \big[ \kappa(y_\alpha - x_{\tau}) \big] \label{exst-ineq2}
\end{align} 
where $x \vee M : = \max \{ x, \, M\}$. So we need to bound $\E_x^\mu [\tau]$ and $\E_x^\mu \big[ \kappa(y_\alpha - x_{\tau}) \big]$.

We apply a comparison theorem \cite[Prop.\ 11.3.2]{MeT09} together with other arguments to bound $\E_x^\mu [\tau]$, which in turn yields a bound on $\E_x^\mu \big[ \kappa(y_\alpha - x_{\tau}) \big]$. The derivations are given in Appendix~\ref{appsec-derivations-ex-pc}. The obtained upper bounds are as follows: for $x \geq L$,
\begin{align}
   \E_x^\mu \big[ \kappa (y_\alpha - x_{\tau}) \big] &  \leq \E_x^\mu [\tau]  \cdot \sup_{y \in [L, \, x \vee M]} \E \Big[ \kappa \big(M - L + \xi(y)\big) \Big], \label{exst-ineq4} \\
  \E_x^\mu [\tau] &  \leq 2 (x - L + D_x)/\Delta_x,  \label{exst-ineq3}
\end{align} 
where $\Delta_x = \inf_{y \in [L, \, x \vee M]} \E \big[ \xi(y) \big] > 0$ (cf.\ Assumption~\ref{ex-storage-cond1}(iv)), and $0 < D_x < \infty$ is the smallest real number such that 
\begin{equation} \label{exst-Dx}
  \sup_{y \in [L, \, x \vee M]}  \E \big[ \xi(y) \, \ind(\xi(y) > D_x) \big] \leq \Delta_x/2.
\end{equation}  
Such $D_x$ exists by Assumption~\ref{ex-storage-cond1}(iii) and the fact that viewed as a function of $z$, 
$$\sup_{y \in [L, \, x \vee M]}  \E \big[ \xi(y) \, \ind(\xi(y) > z) \big]$$ is continuous from the right. The upper bounds in (\ref{exst-ineq3}) and (\ref{exst-ineq4}) are thus finite (cf.~Assumption~\ref{ex-storage-cond1}(iv)).
  
Combining (\ref{exst-ineq1b})-(\ref{exst-ineq3}), we have
$$ \textstyle{ \E_x^\mu \left[ \, \sum_{n=0}^{\tau -1} c(x_n, a_n) + c(x_{\tau}, y_\alpha) \right]} \leq H(x) \qquad \forall \, x \in \R, \ \alpha \geq \alpha_{\bar \epsilon}, $$
where the function $H$ is finite everywhere, independent of $\alpha$, and given by
\begin{equation} \label{eq-H1}
  H(x) = \kappa(M-x) + \textstyle{\sup_{y \in [L, M]} } \, \psi(y) \qquad \text{for} \ x < L,
\end{equation}  
and for $x \geq L$,
\begin{equation} 
  H(x) = 
 \frac{2 (x - L + D_x)}{\Delta_x} \cdot \left\{ \sup_{y \in [L, \, x \vee M]} \psi(y)  + \sup_{y \in [L, \, x \vee M]} \E \Big[ \kappa \big(M - L + \xi(y)\big) \Big] \right\}. \label{eq-H2}
\end{equation}
We can now apply Lemma~\ref{lem-Sch1} to prove $\sup_{\alpha \in (0,1)} h_\alpha(x) < \infty$ for all $x \in \R$. 

Let $\alpha \geq \alpha_{\bar \epsilon}$. For all $x \leq L$, by (\ref{exst-ya}), $\int_\X v_\alpha(y) \, q(dy \,|\, y_\alpha) \leq \uv_\alpha(x) + {\bar \epsilon}/2$, so for applying Lemma~\ref{lem-Sch1}, we can define a policy $\mu^{{\bar \epsilon}/2}_\alpha$ that satisfies (\ref{eq-mu-al-ep}) for $\epsilon = \bar \epsilon/2$, in such a way that $\mu^{{\bar \epsilon}/2}_\alpha(x) = y_\alpha$ for $x \leq L$.
We have $\alpha \uv_\alpha(x_\tau) \leq m_\alpha + {\bar \epsilon}/2$ by (\ref{ineq-exst-1}), so by Lemma~\ref{lem-Sch1} and the preceding proof, for all $x \in \R$ and $\alpha \geq \alpha_{\bar \epsilon}$,
$$ h_\alpha(x) \leq {\bar \epsilon} +  \textstyle{ \E_x^\mu \left[ \, \sum_{n=0}^{\tau -1} c(x_n, a_n) + c(x_{\tau}, y_\alpha) \right]} \leq {\bar \epsilon} + H(x)  < \infty.$$
As $H(x)$ is independent of $\alpha$, this inequality implies $\limsup_{\alpha \uparrow 1} h_\alpha(x) < \infty$, which, under Assumption~\ref{cond-pc-1}(G), is equivalent to $\sup_{\alpha \in (0,1)} h_\alpha(x) < \infty$ by \cite[Lem.~5]{FKZ12}.
\end{proof}
\smallskip

We now tackle Assumption~\ref{cond-pc-2} (the majorization condition). Let us place two more conditions on the one-stage costs and the demand distributions: 

\begin{assumption}[additional conditions on $c(\cdot)$ and $\{\xi(a)\}$] \label{ex-storage-cond2} \hfill 
\begin{enumerate}[leftmargin=0.65cm,labelwidth=!]
\item[\rm (i)] $\lim_{|a| \to \infty} c(x, a) = +\infty$ for each $x \in \R$.
\item[\rm (ii)] For any bounded interval $I \subset \R$,  $\{F_a \mid a \in I\}$ have densities w.r.t.\ the Lebesgue measure that are uniformly bounded above, and $\cup_{a \in I} \supp(F_a)$, the union of their supports, is bounded. 
\end{enumerate}
\end{assumption}

Assumption~\ref{ex-storage-cond2}(i) will help us eliminate actions and find proper subsets of actions to use as $K_\epsilon(x)$ in the majorization condition. Assumption~\ref{ex-storage-cond2}(ii) and the function $H$ derived in the proof of Proposition~\ref{ex-storage-prp1} will be useful in verifying the uniform integrability condition on $\uh$ in Assumption~\ref{cond-pc-2}(iii), since $\uh \leq H + \bar \epsilon$. In Assumption~\ref{ex-storage-cond2}(ii), the requirement on $\{F_a\}$ to have densities is a limitation of our technique and will be discussed further in Remark~\ref{rmk-maj-density}. The bounded-support condition is much stronger than Assumption~\ref{ex-storage-cond1}(iii); it can be relaxed if one knows more precisely how the tails of $F_a, a \in I$, decay and how $H(x)$ varies with $x$ as $x \to - \infty$.

\begin{prop} \label{prp-expc-2}
Assumptions~\ref{ex-storage-cond1} and~\ref{ex-storage-cond2} imply Assumption~\ref{cond-pc-2}. 
\end{prop}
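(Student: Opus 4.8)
The plan is to verify the three parts of Assumption~\ref{cond-pc-2} in turn, reusing the machinery already set up for this example. Parts (i) and (ii) coincide with Assumptions~\ref{cond-uc-2}(i) and~\ref{cond-uc-2}(ii), while part (iii) asks for uniform integrability of $\uh$ relative to the transition kernels on the selected action sets. For every state $x$ and $\epsilon>0$ I would use the \emph{same} choice of $K_\epsilon(x)$ throughout, namely a bounded interval $[x, M_x] \subset A(x)$. The three ingredients driving the argument are: the coercivity Assumption~\ref{ex-storage-cond2}(i) and the pointwise boundedness of $\{h_\alpha\}$ from Proposition~\ref{ex-storage-prp1} (for part (i)); the density/bounded-support Assumption~\ref{ex-storage-cond2}(ii) (for part (ii)); and the explicit dominating function $H$ from the proof of Proposition~\ref{ex-storage-prp1}, which satisfies $\uh \le H + \bar\epsilon$ and is locally bounded (for part (iii)).

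For part (i), I would argue that near-optimal actions are confined to a bounded interval \emph{uniformly in $\alpha$}. Since $c \ge 0$ gives $v_\alpha \ge m_\alpha$, every kernel satisfies $\int_\X v_\alpha(y)\, q(dy\mid x,a) \ge m_\alpha$, so by the $\alpha$-DCOE any action $a$ with $c(x,a) + \alpha \int_\X v_\alpha\, q(dy\mid x,a) \le v_\alpha(x) + \epsilon/2$ obeys $c(x,a) \le (1-\alpha)\,m_\alpha + h_\alpha(x) + \epsilon/2$. Here $(1-\alpha)\,m_\alpha \le B$ for $\alpha$ near $1$ (a consequence of Assumption~\ref{cond-pc-1}(G) via \cite[Lem.~1.2(b)]{Sch93}) and $h_\alpha(x) \le C(x):=\sup_{\alpha} h_\alpha(x) < \infty$ by Proposition~\ref{ex-storage-prp1}; hence $c(x,a) \le B + C(x) + \epsilon/2 =: R(x,\epsilon)$, a bound independent of $\alpha$. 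Coercivity then makes $\{a \in A(x) : c(x,a) \le R(x,\epsilon)\}$ bounded, so taking $M_x := \sup\{a \ge x : c(x,a) \le R(x,\epsilon)\} < \infty$ and $K_\epsilon(x) = [x, M_x]$ captures an $\epsilon/2$-minimizer of the DCOE for every $\alpha \ge \bar\alpha$, giving $\inf_{a \in K_\epsilon(x)}\{\cdots\} \le v_\alpha(x) + \epsilon$.

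Parts (ii) and (iii) exploit that on the bounded interval $K_\epsilon(x)$ the next state $y = a - \xi(a)$ is confined to a bounded set. For (ii), Assumption~\ref{ex-storage-cond2}(ii) supplies a uniform density bound $f_a \le C_I$ and a common bounded demand support $\bigcup_{a \in K_\epsilon(x)}\supp(F_a) \subset [0,S_I]$; the law $q(dy\mid x,a)$ then has density $g_a(y) = f_a(a-y) \le C_I$ supported in the fixed bounded interval $J_0 := [\min K_\epsilon(x) - S_I,\, \max K_\epsilon(x)]$, so $\nu(dy) := C_I\,\ind_{J_0}(y)\,dy$ is a finite measure majorizing all these kernels. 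For (iii), since $q(dy\mid x,a)$ is supported in the compact $J_0$ for every $a \in K_\epsilon(x)$, and since $\uh \le H + \bar\epsilon$ with $H$ finite and locally bounded (each piece of $H$ in~(\ref{eq-H1})--(\ref{eq-H2}) is bounded on compacts using Assumption~\ref{ex-storage-cond1}), $\uh$ is bounded by some $\Lambda < \infty$ on $J_0$; thus $\uh(y)\,\ind[\uh(y)\ge \ell]$ vanishes on the support of every kernel once $\ell > \Lambda$, and the supremum in~(\ref{cond-pc-2c}) is identically zero for large $\ell$.

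The main obstacle is part (i): everything hinges on obtaining the action confinement \emph{uniformly over $\alpha$}, which is why Proposition~\ref{ex-storage-prp1} (boundedness of $h_\alpha(x)$) must be invoked, and one must be careful that $R(x,\epsilon)$ truly does not depend on $\alpha$. By contrast, parts (ii) and (iii) are comparatively routine once Assumption~\ref{ex-storage-cond2}(ii) is in force; indeed the bounded-support hypothesis was imposed precisely to collapse the uniform-integrability requirement in (iii) into the trivial statement that the integrand has compact support, and (as already noted after Assumption~\ref{ex-storage-cond2}) it could be weakened to a genuine tail estimate on $F_a$ balanced against the growth of $H(y)$ as $y \to -\infty$.
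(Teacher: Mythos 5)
Your proof is correct and follows essentially the same route as the paper's: coercivity (Assumption~\ref{ex-storage-cond2}(i)) combined with the $\alpha$-uniform bound on $h_\alpha(x)$ from Proposition~\ref{ex-storage-prp1} and $\limsup_{\alpha\uparrow 1}(1-\alpha)m_\alpha\leq \underline{g}^*$ confines near-optimal actions of the $\alpha$-DCOE to a bounded interval $K_\epsilon(x)$, the bounded-density/bounded-support hypothesis yields the majorizing finite measure, and the bound $\uh\leq H+\bar\epsilon$ together with the compact common support of the kernels trivializes the uniform-integrability requirement. The one place you are thinner than the paper is the assertion that $H$ is bounded on bounded intervals: the term $2(x-L+D_x)/\Delta_x$ in (\ref{eq-H2}) still requires checking $\inf_{x\in I}\Delta_x>0$ (immediate from Assumption~\ref{ex-storage-cond1}(iv)) and $\sup_{x\in I}D_x<\infty$, which the paper derives from the uniform integrability of the demands (Assumption~\ref{ex-storage-cond1}(iii)) and the minimality of $D_x$ in (\ref{exst-Dx}).
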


\begin{proof}
Let $\bar \epsilon > 0, \alpha_{\bar \epsilon} \in (0,1)$, and the function $H$ be as in the proof of Proposition~\ref{ex-storage-prp1}. 
That proof showed that 
under Assumption~\ref{ex-storage-cond1}, $h_\alpha(x) \leq {\bar \epsilon} + H(x)$ for all $x \in \R$ and $\alpha \geq \alpha_{\bar \epsilon}$. Consider now an arbitrary state $x$.
By Assumption~\ref{ex-storage-cond2}(i), there exists $\bar a \geq x$ such that $c(x,a) \geq \underline{g}^* + 2 {\bar \epsilon} + H(x)$ for all $a \geq \bar a$. 
Then, from the $\alpha$-DCOE
$$ \textstyle{(1-\alpha) \, m_\alpha + h_\alpha(x) = \inf_{a \in A(x)} \left\{ c(x,a) + \alpha \int_\X h_\alpha(y) \, q(dy \mid x, a) \right\}}$$
and the fact $\limsup_{\alpha \uparrow 1} (1 - \alpha) \, m_\alpha \leq \underline{g}^*$, we have that for all $\alpha$ sufficiently large,
$$ \textstyle{ \inf_{a \in [x, \, \bar a] } \left\{ c(x,a) + \alpha \int_\X h_\alpha(y) \, q(dy \mid x, a) \right\} = (1-\alpha) \, m_\alpha + h_\alpha(x).}$$
Thus, for any $\epsilon > 0$, Assumption~\ref{cond-pc-2}(i) holds with $K_\epsilon(x) = K : = [x, \bar a]$. 
Next, for the majorizing finite measure $\nu$ required in Assumption~\ref{cond-pc-2}(ii), in view of Assumption~\ref{ex-storage-cond2}(ii), we can simply let $\nu$ be a multiple of the Lebesgue measure on the bounded set $E : = \cup_{a \in K} \, \{ a - \supp(F_a) \}$ and the trivial measure on the complement set $E^c$.

Finally, given Assumption~\ref{ex-storage-cond2}(ii) and the fact $\uh \leq \bar \epsilon+ H$, for Assumption~\ref{cond-pc-2}(iii) to hold, it suffices that $H$ is bounded on bounded intervals. 
The expression (\ref{eq-H1})-(\ref{eq-H2}) of $H$ shows that this is the case. In particular, by Assumption~\ref{ex-storage-cond1}(i) and (iv), if $x$ lies in a bounded interval $I$, all those terms in (\ref{eq-H1})-(\ref{eq-H2}) that involve the functions $\kappa(\cdot)$ and $\psi(\cdot)$ must be bounded above by some constants depending on $I$. The remaining term is $2 (x - L + D_x)/\Delta_x$ (which bounds $\E^\mu_x [ \tau]$) for $x \geq L$. Let $I$ be a bounded interval in $[L, \infty)$. By Assumption~\ref{ex-storage-cond1}(iv), we have 
$$\Delta : = \inf_{x \in I} \Delta_x = \inf_{x \in I} \inf_{y \in [L, \, x \vee M]} \E \big[ \xi(y) \big]  > 0.$$ 
We also have $\sup_{x \in I} D_x < \infty$, because by Assumption~\ref{ex-storage-cond1}(iii), there exists some $0 < D < \infty$ such that 
$$ \textstyle{\sup_{y \geq L}}  \, \E \big[ \xi(y) \, \ind(\xi(y) > D) \big] \leq \Delta/2 \leq \Delta_x/2 \qquad \forall \, x \in I,$$
and this implies that $D \geq D_x$ for all $x \in I$, since $D_x$ is by definition the smallest number satisfying (\ref{exst-Dx}).
Hence $\sup_{x \in I} 2 (x - L + D_x)/\Delta_x < \infty$. Thus $H$ is bounded on any bounded interval, and consequently, Assumption~\ref{cond-pc-2}(iii) holds.
\end{proof}
\smallskip

We have shown that the conditions in Theorem~\ref{thm-pc-acoi} are met and the ACOI holds.
In connection with this example, let us make two final remarks regarding the structure of stationary optimal/$\epsilon$-optimal policies and limitation of our majorization technique. 

\begin{rem} \label{rmk-maj-optpol}
For l.s.c.~models studied in e.g., \cite{FKZ12,FeL18,HL99,JaN06,Sch93}, besides the ACOI or ACOE, one can also relate the optimal actions for discounted problems to those for the average cost problem in the limit as the discount factor $\alpha \uparrow 1$. Moreover, for inventory control, if certain convexity properties are present, there exist optimal policies with particularly simple structures like the policy $\mu$ in this example, i.e., the so-called $(s,S)$ policies and their generalizations (see e.g., \cite{ChS04,Fei16,FeL18}). 
For discontinuous MDP models, in general, one cannot guarantee such structural properties for the stationary optimal or $\epsilon$-optimal policies. The proofs of Theorems~\ref{thm-uc-acoi} and~\ref{thm-pc-acoi} show that if for $\epsilon > 0$, the set-valued map $x \mapsto K_\epsilon(x)$ has an analytic graph, then in the average cost problem, there exists a nonrandomized stationary $\epsilon$-optimal policy $\mu$ such that $\mu(x) \in K_\epsilon(x)$ for all $x \in \X$. This, however, provides only a ``loose'' connection between some good actions in the discounted problems and those in the average cost problem, for the sets $K_\epsilon(x)$ can be quite large, as shown by this example for (PC) and the example for (UC) in Appendix~\ref{appsec-ex-uc}.
\myqed
\end{rem}

\begin{rem} \label{rmk-maj-density}
Typically, the random demands in inventory control problems are allowed to be zero, as long as they are positive with positive probabilities. We do not allow this in Assumption~\ref{ex-storage-cond2}(ii), because no finite measure can majorize the measures $r_a \delta_a$ with $r_a > 0$ for all $a \in I$. This limitation is due to the nature of the measure-theoretic properties underlying our majorization condition, as discussed in Sections~\ref{sec-uc-cond} and~\ref{sec-maj-wsc}.
For similar problems in which $q(dy\,|\,x,a)$ is not atomless, one possible way to proceed is to approximate the original MDP by one in which each action corresponds to some probability distribution over the feasible actions in the original problem. Then the majorization idea and the analysis technique presented in this paper may still be potentially useful for analyzing the approximating MDP, thereby shedding light on the original average cost problem.\myqed
\end{rem}

\begin{appendices}

\section{Supplementary Materials for Section~\ref{sec-5}} \label{appsec-1}

\subsection{Proofs of Proposition~\ref{prp-inf-epilim} and Lemma~\ref{lem-Sch2}} \label{appsec-prf}

\begin{proof}[Proof of Proposition~\ref{prp-inf-epilim}]
We prove the second statement first. By the first inequality in (\ref{ineq-epi1}), we need to prove $\lim_{n \to \infty} \inf_{y \in Y} f_n(y) \geq \inf_{y \in Y} \!\big({\textstyle{\elims_n}} \, f_n \big)(y)$, and in turn, by the assumption (\ref{epi-cond2}), it is sufficient to prove that for each $\epsilon > 0$, $\ell : = \liminf_{j \to \infty} \inf_{y \in K} f_{n_j}(y) \geq \inf_{y \in Y} \big({\textstyle{\elims_n}} \, f_n \big)(y)$. To this end, consider a sequence $\{y_j\}$ in $K$ with $f_{n_j}(y_j) -  \inf_{y \in K} f_{n_j}(y) \to 0$ as $j \to \infty$. 
Since $K$ is compact, there exists a subsequence $y_{j_i} \to \bar y \in K$ such that, with $n'_i : = n_{j_i}$ and $y'_i : = y_{j_i}$, 
$f_{n'_i}(y'_i) \to \ell$ as $i \to \infty$. Since $\big(y'_{i}, f_{n'_i}(y'_{i}) \big) \in \epi (f_{n'_i})$ for all $i$ and $y'_{i} \to \bar y$,
by the definition of the lower epi-limit, we have $\big({\textstyle{\elims_n}} \, f_n \big)(\bar y) \leq \lim_{i \to \infty} f_{n'_i}(y'_{i}) = \ell$ and hence $\inf_{y \in Y} \!\big({\textstyle{\elims_n}} \, f_n \big)(y) \leq \ell$ as desired.

To prove the first statement, for every $\epsilon > 0$, let $\bar y$ be any point that satisfies 
$$ \big({\textstyle{\elims_n}} \, f_n \big)(\bar y) \leq \inf_{y \in Y} \big({\textstyle{\elims_n}} \, f_n \big)(y) + \epsilon/3.$$
By the definition of the lower epi-limit, there exist a subsequence $\{n_j\}$ and points $\{y_j\}$ with 
$y_j \to \bar y$ and $f_{n_j}(y_j) \to  \big({\textstyle{\elims_n}} \, f_n \big)(\bar y)$ as $j \to \infty$. 
Let $K = \{y_j\}_{j\geq1} \cup \{ \bar y\}$, which is a compact subset of $Y$. Since $\inf_{y \in K}  f_{n_j}(y) \leq f_{n_j}(y_j)$, it follows that for all $j$ sufficiently large,
$$  \inf_{y \in K}  f_{n_j}(y) \leq \big({\textstyle{\elims_n}} \, f_n \big)(\bar y) + \epsilon/3 \leq \inf_{y \in Y} \! \big({\textstyle{\elims_n}} \, f_n \big)(y)  + 2\epsilon/3,$$
whereas (\ref{epi-cond}) implies that $\inf_{y \in Y} \!\big({\textstyle{\elims_n}} \, f_n \big)(y) \leq \inf_{y \in Y} f_{n_j}(y) + \epsilon/3$ for all $j$ sufficiently large. Hence (\ref{epi-cond2}) holds, as desired. 
\end{proof}

\begin{proof}[Proof of Lemma~\ref{lem-Sch2}]
First, note that for all $\alpha$ larger than some $\alpha_\epsilon < 1$, we have $(1-\alpha) \um_\alpha \leq \underline{g}^* + \epsilon/4$. This follows from the fact that $\alpha (\um_\alpha - m_\alpha) \leq (1 - \alpha) m_\alpha$ (since $\alpha \, \um_\alpha \leq m_\alpha \leq \um_\alpha$) and $\limsup_{\alpha \to \infty} (1 - \alpha) m_\alpha \leq \underline{g}^*$. 

Consider an arbitrary $\alpha \geq \alpha_\epsilon$ and $(x,a) \in G$. Let $\pi$ be a policy that applies action $a$ at $x_0=x$ and then applies a stationary policy that is $\epsilon/4$-optimal for the $\alpha$-discounted problem. 
Let $\tau = \inf \big\{ n \geq 0 \mid (x_n, a_n) \not\in G \big\}$ ($\tau = \infty$ if this set is empty).
We have
\begin{align*}
\!\! \int_\X \!\! v_\alpha(y) \, q(dy \,|\, x, a)  + \frac{\epsilon}{4} & \geq \E^\pi_x \Big[ \sum_{n=1}^\infty \alpha^{n-1} c(x_n, a_n) \Big] \\
      & \geq \E^\pi_x \Big[ \sum_{n=1}^{\tau} \alpha^{n-1} c(x_n, a_n) + \alpha^{\tau} v_\alpha (x_{\tau+1}) \Big]  \\
      & =   \sum_{n=1}^{\infty}  \E^\pi_x \big[\ind(\tau > n - 1) \, \alpha^{n-1} c(x_n, a_n)  + \ind(\tau = n) \, \alpha^{n} v_\alpha (x_{n+1}) \big].   
\end{align*}
In view of the property of $G$, for $n \geq 1$, on $\{\tau > n-1\}$,
$$  \E^\pi_x \big[ c(x_n, a_n) \mid \F_{n-1} \big] \geq \int_\X \uc(x_n) \, q(dx_n \mid x_{n-1}, a_{n-1} \big)  \geq \underline{g}^* + \epsilon \geq (1-\alpha) \,\um_\alpha + 3\epsilon/4,$$
and in particular, $\E^\pi_x \big[ c(x_1, a_1) \big] \geq (1-\alpha) \,\um_\alpha + 3\epsilon/4$.
We also have $\E^\pi_x \big[ v_\alpha(x_{n+1}) \mid \F_n \big] \geq \um_\alpha$.
Combining these relations, we obtain that
\begin{align*}   
 \!\int_\X v_\alpha(y) \, q(dy \mid x, a)  + \epsilon/4 & \geq 3\epsilon/4 + 
        \E^\pi_x \Big[ \sum_{n=1}^{\tau} \alpha^{n-1} \cdot (1- \alpha) \um_\alpha + \alpha^{\tau} \um_\alpha \Big] 
      = 3\epsilon/4 +   \um_\alpha,
\end{align*}
which gives the desired inequality for all $\alpha \geq \alpha_\epsilon$ and $(x,a) \in G$.
\end{proof}

\subsection{Derivations of (\ref{exst-ineq4})-(\ref{exst-ineq3}) for the Proof of Proposition~\ref{ex-storage-prp1}} \label{appsec-derivations-ex-pc}

We first prove the bound (\ref{exst-ineq3}) on $\E_x^\mu [\tau]$ for $x \geq L$.
Recall $\Delta_x = \inf_{y \in [L, \, x \vee M]} \E \big[ \xi(y) \big] > 0$ and $0 < D_x < \infty$ is such that 
$$  \sup_{y \in [L, \, x \vee M]}  \E \big[ \xi(y) \, \ind(\xi(y) > D_x) \big] \leq \Delta_x/2.$$
Consider nonnegative random variables $Z_n : =  (x_n - L + D_x )^+$ for $n \geq 0$, where $(\cdot)^+ : = \max \{\cdot, 0\}$.
Denote by $\F_n$ the $\sigma$-algebra generated by $(x_0, a_0, \ldots, x_n, a_n)$. Using the definition of the policy $\mu$, a direct calculation shows that when $x_n \geq L$,
$$ \E_x^\mu \big[ Z_{n+1} \mid \F_n \big] = \E_x^\mu \big[ ( x_n - \xi_n(x_n) - L + D_x )^+ \mid \F_n \big] \leq Z_n - \tfrac{1}{2} \Delta_x, $$
and when $x_n < L$, 
$$\E_x^\mu \big[ Z_{n+1} \mid \F_n \big] \leq Z_n + y_\alpha - L + D_x.$$
By a comparison theorem \cite[Prop.\ 11.3.2]{MeT09} (which is an implication of Dynkin's formula~\cite[Thm.~11.3.1]{MeT09}), this implies that for the stopping time $\tau = \inf \{ n \geq 0 \mid x_n < L \}$,
$$ \E_x^\mu \big[ \, \textstyle{\sum_{n=0}^{\tau - 1}} \, \tfrac{1}{2} \Delta_x \, \big] \leq Z_0 = x - L + D_x.$$
Therefore, 
$$ \E_x^\mu [\tau]  \leq 2 (x - L + D_x)/\Delta_x, $$
which is the desired inequality~(\ref{exst-ineq3}).

We now bound $\E_x^\mu \big[ \kappa(y_\alpha - x_{\tau}) \big]$ for $x \geq L$ and derive the inequality (\ref{exst-ineq4}).
Since $\kappa(\cdot)$ is non-decreasing by Assumption~\ref{ex-storage-cond1}(i) and 
$$y_\alpha \leq M, \ \ \ x_{\tau -1} \geq L, \ \ \ x_{\tau} =  x_{\tau-1} - \xi_{\tau-1}(x_{\tau-1})$$ 
by the definition of the stopping time $\tau$ and policy $\mu$, we have
\begin{align*}
 \E_x^\mu \big[ \kappa\big(y_\alpha - x_{\tau} \big) \big] & \leq \E_x^\mu \big[ \kappa \big(M - L + \xi_{\tau-1}(x_{\tau-1}) \big) \big]  \\
 & \leq  \E_x^\mu \left[ \sum_{n=0}^{\tau -1}  \kappa \big(M - L + \xi_{n}(x_{n}) \big) \right] \\
 & =  \E_x^\mu \left[ \sum_{n=0}^\infty \ind(\tau > n) \, \kappa \big(M - L + \xi_{n}(x_{n}) \big) \right]  \\
 & = \E_x^\mu \left[ \sum_{n=0}^\infty \ind(\tau > n) \,  \E_x^\mu \Big[ \kappa \big(M - L + \xi_{n}(x_{n}) \big) \mid \F_n \Big] \right] \\
 & \leq   \E_x^\mu \left[ \sum_{n=0}^\infty \ind(\tau > n)  \sup_{y \in [L, \, x \vee M]} \E \Big[ \kappa \big(M - L + \xi(y)\big) \Big] \right].
\end{align*} 
Therefore,
$$   \E_x^\mu \big[ \kappa (y_\alpha - x_{\tau}) \big]  \leq \E_x^\mu [\tau]  \cdot \sup_{y \in [L, \, x \vee M]} \E \Big[ \kappa \big(M - L + \xi(y)\big) \Big].$$
This is the desired inequality (\ref{exst-ineq4}).

\subsection{An Illustrative Example for the (UC) Case} \label{appsec-ex-uc}

This example is adapted from an inventory-production system example in the book~\cite[Examples~8.6.2,~10.9.3]{HL99}. 
In the original example, the random demands at each stage are i.i.d.\ and for the average cost problem, the production level must always be below the expected demand so as to ensure that the $w$-geometric ergodicity condition of \cite[Thm.~10.3.1]{HL99} holds.
We relax these requirements in our example so that the state transition stochastic kernel need not be continuous and the MDP need not be uniformly $w$-geometrically ergodic. To show that the ACOI holds in this example, most of our efforts will be on making sure that $\sup_{\alpha \in (0,1)} \|h_\alpha \|_w < \infty$.

Regarding notation, for $x, y \in \R$, denote $x \wedge y = \min \{x, y\}$ and $x \vee y = \max \{x, y\}$. Recall also that $(x)^+=\max \{ x, 0\}$.

Let the state and action spaces be $\X = \A = \R_+$. 
The states evolve according to 
$$x_{n+1} = (x_n + z_n - \xi_n)^+,$$ 
where $x_n$ is the stock level and $z_n$ the amount of production at the beginning of the $n$th stage, 
and $\xi_n$ is the random demand during that stage. As in Section~\ref{sec-pc-ex}, we let actions correspond to the stock levels after production:
$$ a_n = x_n + z_n.$$
We assume that given $a_n$, the demand $\xi_n$ depends on the value of $a_n$ and is conditionally independent of the history $(x_0, a_0, \xi_0, x_1, \ldots, \xi_{n-1}, x_n)$. As before, we denote the demand distributions by $F_a$, $a \in \R_+$, we write $\xi_n(a_n)$ for $\xi_n$ to emphasize the dependence on $a_n$, and we use $\xi(a)$ to denote a generic random variable with probability distribution $F_a$ and $\E [ \xi(a) ]$ to denote its expectation w.r.t.\ $F_a$.

Let the one-stage cost function be given by
$$c(x, a) = \kappa(z) + \psi(a) - s \, \E [ a \wedge \xi(a) ]   \qquad \text{with} \ z = a -x.$$ 
The functions $\kappa: \R_+ \to \R_+$ and $\psi: \R_+ \to \R_+$ specify the production cost and the maintenance/holding cost, respectively, and $s \,\E \big[ a \wedge \xi(a) \big]$ is the sales revenue with $s$ being the unit sale price. This definition of $c(\cdot)$ is similar to the one in \cite[Example~8.6.2]{HL99}, except that we do not require $\kappa(\cdot)$ and $\psi(\cdot)$ to be continuous or l.s.c. 

\begin{assumption}[conditions on one-stage costs] \label{ex-uc-cond1} 
On $\R_+$, the production cost $\kappa(\cdot)$ is bounded on bounded intervals, and the maintenance or holding cost $\psi(\cdot)$ is bounded above by some polynomial function. 
\end{assumption}

\subsubsection{Definitions of Action Sets}

We assume that beyond certain stock level $L$, the demand $\xi(a)$ becomes ``saturated'' and follows a fixed probability distribution, no longer affected by $a$. Accordingly, we separate the states into three groups,
$$ \{0\}, \qquad (0, \, L),  \qquad [L, \, + \infty).$$
The state $0$ will be special in this example.
The feasible action sets $A(x)$ are bounded intervals of the form $[x, a_x]$, and we define them for each group of states differently, by placing different constraints on the admissible production levels $z$: 
\begin{itemize}[leftmargin=0.7cm,labelwidth=!]
\item[(i)] For all $x \geq L$, $z \in [0, \theta]$ for some $\theta > 0$ (so $A(x) = [x, x+\theta]$). 
We will let the parameter $\theta$ be smaller than the mean demand, in order to satisfy the (UC) model conditions. The precise definition of $\theta$ will be given below. 
\item[(ii)] For $0 < x < L$, $z \in [0, \theta_x]$ for some $\theta_x > 0$, and 
$\sup_{x \in (0, L)} \theta_x < \infty$. There are no other restrictions on $\theta_x$; in particular, the production level $z$ can exceed the expected demand $\E \big[\xi(x+z)\big]$. (Regarding measurability, it suffices that $\theta_x$ is a Borel measurable function of $x$.)
\item[(iii)] For the state $0$, $z \in [0, \bar a]$ where $\bar a$ will be made large enough so that $A(0) = [0, \bar a]$ contains $A(x)$ for a large subset of states $x$. The precise definition of $A(0)$ will be given below. 
The purpose of such a choice is to ensure $\sup_{\alpha \in (0,1)} \| h_\alpha\|_w < \infty$ in this example; more specifically, it will be used to ensure that $\{h_\alpha\}$ can be bounded below by a multiple of the weight function $w$ of our choice.
\end{itemize}
\medskip

We now proceed to define the weight function $w(\cdot)$, the action set $A(0)$, and the maximum production level $\theta$ allowed at states $x \geq L$. Some conditions on the demand distributions will be needed: 

\begin{assumption}[conditions on $\{\xi(a)\}$] \label{ex-uc-cond2} \hfill
\begin{enumerate}[leftmargin=0.6cm,labelwidth=!]
\item[\rm (i)] For $a \geq L$, $F_a$ is the same, independent of $a$.
\item[\rm (ii)]  $\inf_{a > 0} \E \big[ \xi(a) \big] > 0$.
\item[\rm (iii)] $\big\{\xi(a) \mid a \geq 0 \big\}$ is uniformly integrable. 
\end{enumerate}
\end{assumption}
\smallskip

These conditions will be used in the subsequent analysis to ensure that all the conditions required by Theorem~\ref{thm-uc-acoi} are met, except for the majorization condition. (For the latter, another condition will be added; cf.~Assumption~\ref{ex-uc-cond3}.) 

\medskip
\noindent {\bf Choice of $\theta$ and Parameters in the (UC) Model:} 
Similarly to \cite[Example~10.9.3]{HL99}, for states $x \geq L$, we set the upper limit $\theta$ on the production levels to be such that for $a \geq L$,
$$  0 <  \theta <  \E \big[ \xi(a) \big] $$
(cf.\ Assumption~\ref{ex-uc-cond2}(i)-(ii)), 
and we then have that for some $r > 0$ (and all $a \geq L$),
$$ \lambda : =  \E \Big[ e^{r (\theta - \xi(a) )} \Big] < 1.$$
Let the weight function $w$ be 
$$w(x) = e^{r x}.$$ 
By a direct calculation similar to that given in \cite[pp.~72-73]{HL99},
\begin{equation} \label{ineq-ex-uc-0a}
 \sup_{z \in [0, \theta]} \, \int_\X w(y) \, q(dy \mid x, \, x+z) \leq \lambda \, w(x) + 1 \qquad \forall \, x \geq L.
\end{equation} 
For states $x < L$, since $\cup_{x < L} A(x)$ is bounded, we have
$$b : = \sup_{x < L, \, a \in A(x)} \int_\X w(y) \, q(dy \mid x, a) < \infty. $$
The (UC) model condition (b) is then satisfied, for the preceding constants $\lambda, b$ and weight function $w(\cdot)$.
In view of Assumption~\ref{ex-uc-cond1} and the fact that the admissible production levels are uniformly bounded for all states, the (UC) model condition (a) on the one-stage cost function is also satisfied for some constant $\hat c$.

\medskip
\noindent {\bf Choice of $A(0)$:} For the state $0$, we choose an especially large set $A(0)$ as follows.
For some $\lambda' \in (\lambda, 1)$, let $\tilde L = \max\{ L, \, - \ln (\lambda' - \lambda)/r\}$. Then $(\lambda' - \lambda) \, e^{r \tilde L} \geq 1$ and by (\ref{ineq-ex-uc-0a}),
\begin{equation} \label{ineq-ex-uc-0}
  \sup_{z \in [0, \theta]} \int_\X w(y) \, q(dy \mid x, \, x+z) \leq \lambda' w(x) \qquad \forall \, x \geq \tilde L \geq L.
\end{equation}  
Let $A(0) =[0, \bar a]$ be such that
\begin{equation}
    A(0) \supset A(x) \qquad \forall \, 0 < x \leq \tilde L
\end{equation}    
(e.g., let $\bar a = ( \tilde L + \theta) \vee \sup_{x \in (0, L)} (x + \theta_x) < \infty$). As mentioned and will be shown shortly, the purpose of this choice of $A(0)$ is to make sure that in our subsequent analysis, $h_\alpha(x)$ can be bounded from below independently of $\alpha$.

\subsubsection{Bounding $\{h_\alpha\}$} \label{sec-uc-ex-2}

Set $\bar x = 0$ in the definition of the relative value functions $h_\alpha(x) = v_\alpha(x) - v_\alpha(\bar x)$, $\alpha \in (0,1)$.
We prove below that under the preceding assumptions, $\sup_{\alpha \in (0,1)} \|h_\alpha\|_w < \infty$.

Let $\hat \X : = [0, \tilde L]$. We derive upper/lower bounds on $h_\alpha(x)$, first for $x \in \hat \X$ and then for $x \not\in \hat \X$. To calculate the upper bounds, we consider the policy $\pi$ that makes no production so that $z_n = 0$ and $a_n = x_n$ always, and we bound first the expected hitting time to the state $\bar x = 0$ under $\pi$. Let $\bar \tau := \inf \{ n \geq 0 \mid x_n = 0 \}$.

\medskip
\noindent {\bf Bounding $\E_x^\pi \big[\bar \tau\big]$ for $x \in \hat \X$:} The derivation is similar to that of (\ref{exst-ineq3}) in the example in Section~\ref{sec-pc-ex} (see Appendix~\ref{appsec-derivations-ex-pc}).
Let $\Delta : = \inf_{y \in [0, \tilde L]} \E \big[ \xi(y) \big] > 0$ (cf.\ Assumption~\ref{ex-uc-cond2}(ii)). By Assumption~\ref{ex-uc-cond2}(iii), there exists $0 < D < \infty$ such that 
\begin{equation} 
  \sup_{y \in [0, \, \tilde L]}  \E \big[ \xi(y) \, \ind(\xi(y) > D) \big] \leq \Delta/2. \notag
\end{equation}
For $n \geq 0$, define $\tilde x_0 : = x_0 = x$, $\tilde x_{n+1} : = x_n - \xi_n(x_n)$; then $\tilde x_{n+1} = x_{n+1}$ if $\xi_n(x_n) \leq x_n$.
Let $Z_n : =  ( \tilde x_n + D )^+$ and let $\F_n$ be the $\sigma$-algebra generated by $(x_0, x_1, \ldots, x_n)$.
A direct calculation shows that 
$$ \E^\pi_x \big[ Z_{n+1} \mid \F_n \big] \leq \begin{cases}
  Z_n - \Delta/2 & \text{when} \ x_n > 0; \\
  Z_n + D & \text{when} \ x_n = 0.
  \end{cases} $$
Applying a comparison theorem \cite[Prop.\ 11.3.2]{MeT09} to the nonnegative random variables $\{Z_n\}$, we obtain 
$$ \E^\pi_x \big[ \textstyle{\sum_{n=0}^{\tau - 1}} \tfrac{1}{2} \Delta \big] \leq Z_0 = x + D,   $$
so
\begin{equation} \label{ineq-ex-uc-1}
  \E^\pi_x \big[ \bar \tau \big] \leq 2 (x + D)/\Delta \leq 2 (\tilde L + D)/\Delta.
\end{equation}  

\smallskip
\noindent {\bf Bounding $h_\alpha(x)$ from above for $x \in \hat \X$:} The calculations and reasoning are similar to those given in Example~\ref{ex-pt-invmod}. We have
\begin{align}
 v_\alpha(x) & \leq  \E_{x}^{\pi} \big[ \textstyle{\sum}_{n=0}^{\bar \tau-1} \, \alpha^n c(x_n, a_n)  +  \alpha^{\bar \tau} v_\alpha(\bar x) \big] \notag \\
 & \leq \E_x^{\pi} \big[ \textstyle{\sum}_{n=0}^{\bar \tau-1} \, \alpha^n c(x_n, a_n) \big]  +  v_\alpha(\bar x)  + \E_{x}^{\pi} \big[  1 - \alpha^{\bar \tau}\big] \cdot | v_\alpha(\bar x) | \notag \\
 & \leq \hat c \, \E_{x}^{\pi} \big[ \bar \tau \big] \cdot \textstyle{\sup_{x' \in \hat \X}} \, w(x')  + v_\alpha(\bar x)  + \ell_{\bar x} \, \E_{x}^{\pi} \big[ \bar \tau \big], \label{ineq-ex-uc-2}
\end{align}
where, to derive the last inequality, for the first term, we used the (UC) model condition (a) and the fact that the states $x_n \in \hat \X$ for all $n$ under $\pi$, and for the last term, we used the fact that $\E_{x}^{\pi} \big[  1 - \alpha^{\bar \tau}\big] \leq (1 - \alpha) \, \E_{x}^{\pi} \big[ \bar \tau \big]$ and in (UC), $|( 1 - \alpha) v_\alpha(\bar x)|$ is bounded by some constant $\ell_{\bar x}$ for all $\alpha \in (0,1)$. 

\smallskip
\noindent {\bf Bounding $h_\alpha(x)$ from below for $x \in \hat \X$:} By the construction of this example, $A(\bar x) = A(0) \supset A(x)$ for all $x \in \hat \X = [0, \tilde L]$ and $q(dy \,|\, x, a)$ depends on $a$ only. Therefore,
\begin{equation} \label{ineq-ex-uc-3}
\textstyle{  v_\alpha(x) - v_\alpha(\bar x) \geq - \sup_{a \in A(x)} \big| c(x, a) - c(\bar x, a) \big| \geq - 2 \hat c \, \sup_{x' \in \hat \X} w(x').}
\end{equation}   

\smallskip
\noindent {\bf Bounding $h_\alpha(x)$ for $x \not\in \hat \X$:} Let $\tau = \inf \{ n \geq 0 \mid x_n \in \hat \X \}$. In view of (\ref{ineq-ex-uc-0}), the same derivations (\ref{eq-invmod2})-(\ref{ineq-part-invmod2}) and reasoning given in Example~\ref{ex-pt-invmod} apply here and yield the inequalities:
\begin{align*}
    v_\alpha(x) & \leq   \hat c \, \ell w(x) + v_\alpha(\bar x) + \E_x^{\mu_\alpha} \big[ \alpha^\tau v_\alpha(x_{\tau}) - v_\alpha(\bar x) \big], \\    
      v_\alpha(x) & \geq - \hat c \, \ell w(x) + v_\alpha(\bar x) +  \E_x^{\mu_\alpha} \big[ \alpha^\tau v_\alpha(x_{\tau}) - v_\alpha(\bar x) \big]  - \epsilon,
\end{align*}
where $\ell > 0$ is some constant independent of $\alpha$ and $x$; $\epsilon > 0$ is some arbitrary small number; and $\mu_\alpha$ is a stationary $\epsilon$-optimal policy for the $\alpha$-discounted problem. Similarly to the derivations of (\ref{eq-invmod3})-(\ref{eq-invmod3b}) given in Example~\ref{ex-pt-invmod}, we can bound the third term in the above inequalities as
$$ \big| \E_x^{\mu_\alpha} \big[ \alpha^\tau v_\alpha(x_{\tau}) - v_\alpha(\bar x)  \big]  \big| \leq \textstyle{ \sup_{x' \in \hat \X}} \big| v_\alpha(x') - v_\alpha(\bar x) \big| + \ell \ell_{\bar x} w(x),$$
where, by (\ref{ineq-ex-uc-1})-(\ref{ineq-ex-uc-3}),
$$ \textstyle{ \sup_{x' \in \hat \X} } \, \big| v_\alpha(x') - v_\alpha(\bar x) \big| \leq 2 \hat c \, e^{r \tilde L} + \big( \hat c \, e^{r \tilde L} + \ell_{\bar x} \big) \cdot 2 (\tilde L + D)/\Delta,$$
which is a constant independent of $\alpha$.

Combining the preceding relations shows that for some constants $\tilde \ell_1, \tilde \ell_2$ independent of $\alpha$ and $x$,
$$\big| v_\alpha(x) - v_\alpha(\bar x) \big| \leq \tilde \ell_1 w(x) + \tilde \ell_2, \qquad x \in \X.$$ 
It now follows that $\sup_{\alpha \in (0,1)} \|h_\alpha\|_w < \infty$, so Assumption~\ref{cond-uc-1} is satisfied in this example.

\subsubsection{Satisfying the Majorization Condition}

We impose an additional condition on the demand distributions:

\begin{assumption} \label{ex-uc-cond3} 
The demand distributions $\{F_a \mid a > 0 \}$ have densities w.r.t.\ the Lebesgue measure that are uniformly bounded above.
\end{assumption}

The majorization condition (Assumption~\ref{cond-uc-2}) holds easily under the preceding assumptions. In particular, for each state $x \geq 0$ and $\epsilon > 0$, with $A(x) = [x, a_x]$, we observe the following:
\begin{itemize}[leftmargin=0.5cm,labelwidth=!]
\item Let $K_\epsilon(x) = A(x)$; then Assumption~\ref{cond-uc-2}(i) holds trivially. 
\item For Assumption~\ref{cond-uc-2}(ii), the required majorizing finite measure $\nu$ can be defined as 
$$\nu : = f^{\text{max}} \varphi + \delta_{0},$$ 
where $f^{\text{max}}$ is an upper bound on the densities of $\{F_a \mid a > 0 \}$ (cf.~Assumption~\ref{ex-uc-cond3}); $\varphi$ equals the Lebesgue measure on $[0, a_x]$ and the trivial measure on $(a_x, \infty)$; and $\delta_{0}$ is the Dirac measure at $0$. This finite measure $\nu$ majorizes $q(dy \,|\, x, a)$, $a \in [x, a_x]$ and thus meets the requirement in  Assumption~\ref{cond-uc-2}(ii).
\item Finally, notice that $x_{1} \in [0, a_x]$ if $x_0=x$, whereas $w(y) = e^{r y}$ is bounded on $[0, a_x]$. Therefore, the uniform integrability condition required by Assumption~\ref{cond-uc-2}(iii) is also trivially satisfied.
\end{itemize}
\smallskip

We have now verified all the conditions in Theorem~\ref{thm-uc-acoi} and can therefore conclude that the ACOI holds in this example.

\end{appendices}

\section*{Acknowledgments}
The author would like to thank Professor Eugene Feinberg and two anonymous reviewers for important critical comments on the previous version of this manuscript, and for pointing her to several related early and recent works on Borel-space MDPs. The author is also grateful to Dr.~Martha Steenstrup, who read parts of her preliminary draft and gave her advice on improving the presentation.

\addcontentsline{toc}{section}{References} 
\bibliographystyle{siamplain} 
\let\oldbibliography\thebibliography
\renewcommand{\thebibliography}[1]{%
  \oldbibliography{#1}%
  \setlength{\itemsep}{0pt}%
}
{\fontsize{9}{11} \selectfont
\bibliography{ac_BorelDP_bib_new}}


\end{document}